\numberwithin{equation}{section}
\title[Boundaries of sine kernel universality]{\LARGE \normalfont B\MakeLowercase{oundaries of sine kernel universality for} G\MakeLowercase{aussian perturbations of} H\MakeLowercase{ermitian matrices}
}
\author[]{Tom Claeys$^*$}
\address{$^*$Institute de Recherche en Math\'ematique et Physique, Universit\'e catholique de Louvain, Chemin du Cyclotron
2, B-1348 Louvain-La-Neuve, Belgium}
\email{tom.claeys@uclouvain.be}
\author[]{Thorsten Neuschel$^\dag$}
\address{$^\dag$Institute de Recherche en Math\'ematique et Physique, Universit\'e catholique de Louvain, Chemin du Cyclotron
2, B-1348 Louvain-La-Neuve, Belgium}
\email{thorsten.neuschel@uclouvain.be}
\author[]{Martin Venker$^\ddag$}
\address{$^\ddag$Department of Mathematics, Ruhr University Bochum, 44780 Bochum,
Germany}
\email{martin.venker@ruhr-uni-bochum.de}
\newcommand{\R}{\mathbb{R}}
\newcommand{\C}{\mathbb{C}}
\DeclareMathOperator{\PE}{PE}
\newtheorem{thm}{Theorem}[section]
\newtheorem{cor}{Corollary}[section]
\newtheorem{lemma}{Lemma}[section]
\newtheorem{prop}{Proposition}[section]
\theoremstyle{remark}
\newtheorem{rmk}{Remark}[section]
\newcommand{\lb}{\left(}
\newcommand{\rb}{\right)}
\renewcommand{\O}{\mathcal O}
\newcommand{\lv}{\lvert}
\newcommand{\rv}{\rvert}
\renewcommand{\k}{\kappa}
\renewcommand{\t}{\tau}
\newcommand{\g}{\gamma}
\renewcommand{\epsilon}{\varepsilon}
\renewcommand\section{\@startsection {section}{1}{\z@}%
                                   {-3.5ex \@plus -1ex \@minus -.2ex}%
                                   {2.3ex \@plus.2ex}%
                                   {\normalfont\Large\bfseries}}
\renewcommand\subsection{\@startsection{subsection}{2}{\z@}%
                                     {-3.25ex\@plus -1ex \@minus -.2ex}%
                                     {1.5ex \@plus .2ex}%
                                     {\normalfont\large\bfseries}}
\begin{document}

\begin{abstract}
We explore the boundaries of sine kernel universality for the eigenvalues of Gaussian perturbations of large deterministic Hermitian matrices.  Equivalently, we study for deterministic initial data the time after which Dyson's Brownian motion exhibits sine kernel correlations. We explicitly describe this time span in terms of the limiting density and rigidity of the initial points.  Our main focus lies on cases where the initial density vanishes at an interior point of the support. We show that the time to reach universality becomes larger if the density vanishes faster or if the initial points show less rigidity.
\end{abstract}
\keywords{Random matrices, sine kernel, universality, Dyson's Brownian motion}

\maketitle

\section{Introduction and main results}
It is well known that eigenvalues of large random matrices exhibit a highly universal behavior in the sense that the local 
limiting distributions depend only on few characteristics of the underlying matrix distribution. Typically, symmetries of the random 
matrices divide the models into universality classes. In this paper, we will deal with random Hermitian matrices of the form
\begin{align}
Y_n(t):=M_n+\sqrt{t}H_n,\label{def_matrix}
\end{align}
where $M_n$ is a deterministic $n\times n$ Hermitian matrix, $t>0$ and $H_n$ is an $n\times n$ random 
matrix sampled from the Gaussian Unitary Ensemble (GUE), i.e., from the distribution with density proportional to
\begin{align}
e^{-\frac n2\textup{Tr}(H_n^2)}\label{def_GUE}
\end{align}
on the space $\mathcal M_n$ of $n\times n$ complex Hermitian matrices with respect to the Lebesgue measure. Equivalently, the entries of $H_n$ 
on and above the diagonal are independent (with independent real and imaginary parts) with $H_{n,ii},\Re H_{n,ij},\Im H_{n,ij}, i<j$ being 
normally distributed with mean 0 and variance $1/(n(2-\delta_{ij}))$. 

The parameter $t$ will be interpreted as time since $\sqrt{t}H_n$ in \eqref{def_matrix} has the same distribution for fixed $t$ as $B_t/\sqrt n$, 
where $(B_t)_{t\geq0}$ is a Brownian motion on the space $\mathcal M_n$. The division by $\sqrt n$ is convenient for considering 
eigenvalues in the large $n$ limit as this rescaling will result in an almost surely compact spectrum. Thus $Y_n(t)$ has for given $t$ 
the same distribution as a rescaled Hermitian Brownian motion with initial configuration $M_n$. The corresponding 
eigenvalue process is called Dyson's Brownian motion \cite{Dyson}, see Figure \ref{figure1}.
\begin{figure}[h]
\centering
      \begin{minipage}[t]{0.45\linewidth}
			\centering
			 \includegraphics[trim=5cm  5cm 5cm 9cm,clip,width=\linewidth]{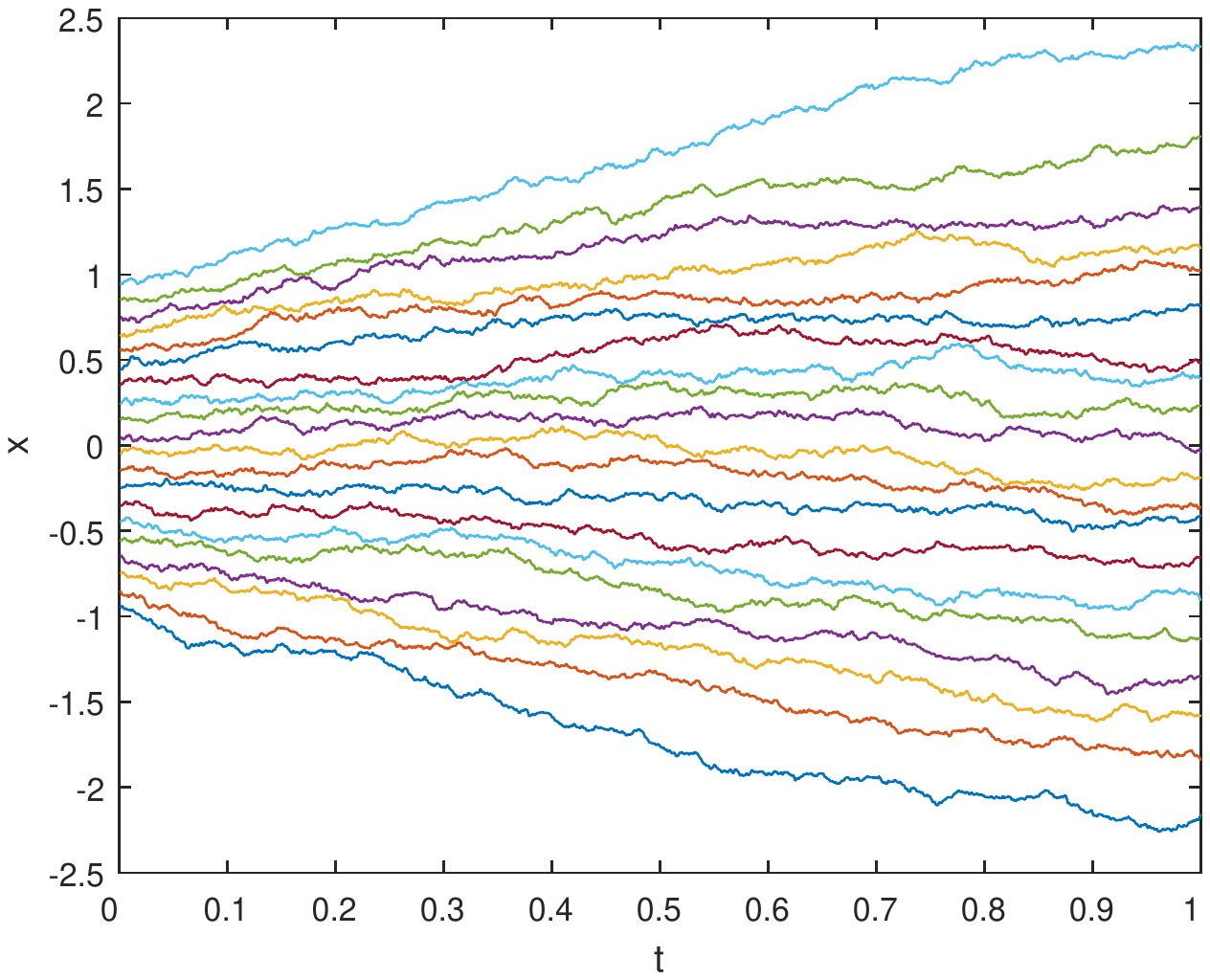}
			\end{minipage}%
			\hfill
			\begin{minipage}[t]{0.45\linewidth}
			\centering
\includegraphics[trim=5cm  5cm 5cm 9cm,clip,width=\linewidth]{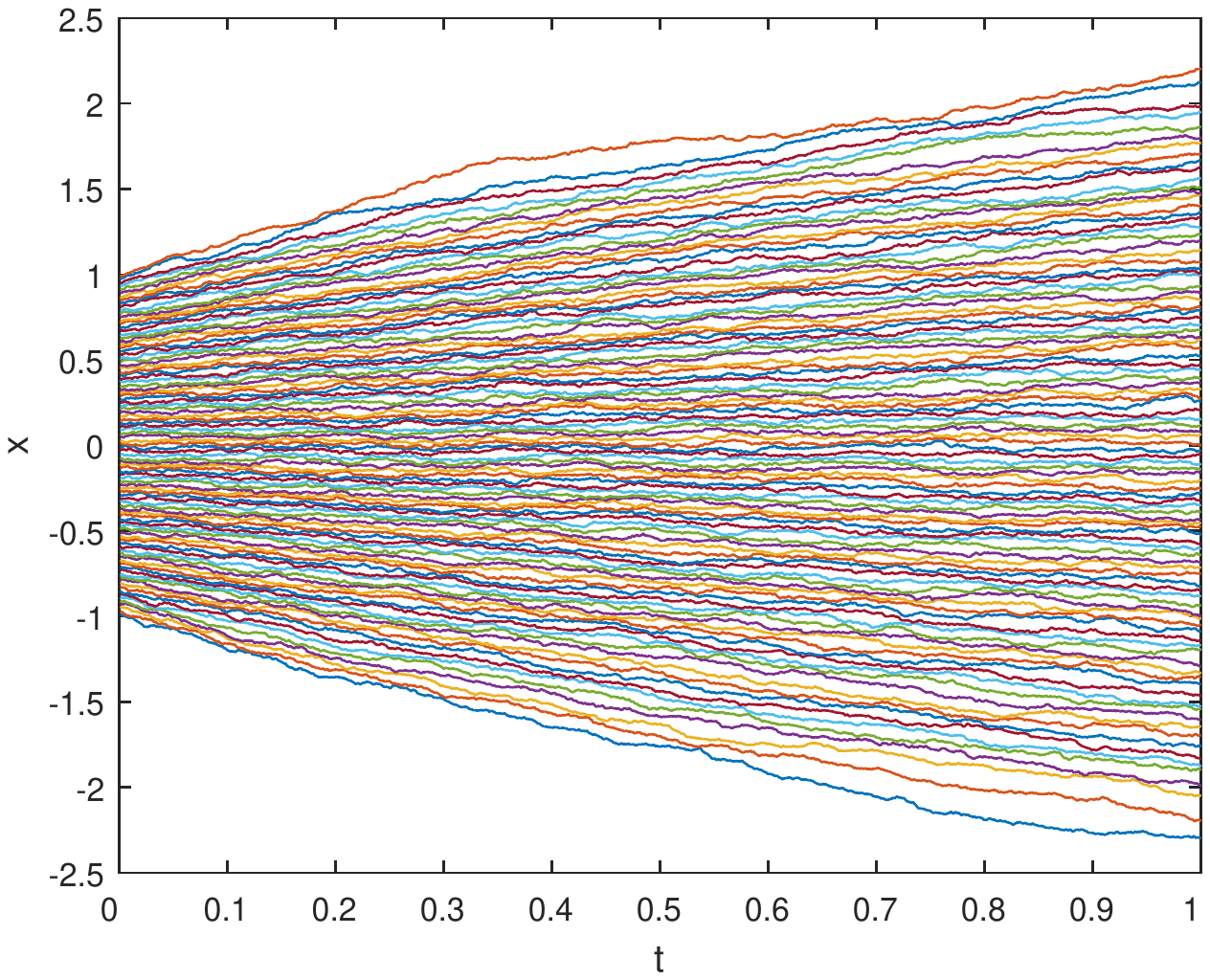} 
			\end{minipage} \vspace{-5em}\caption{Two samples of Dyson's Brownian motion for $t\in[0,1]$, with $n=20$ (left) and $n=100$ (right), for $M_n$ having equi-spaced eigenvalues on $[-1,1]$.}
			\label{figure1}
\end{figure}

Equivalently, $Y_n(t)$ may be seen as being sampled from a GUE with external source, i.e., a random Hermitian matrix $Y_n$ with density proportional to 
\[e^{-\frac{n}{2t}\textup{Tr}\left((Y_n-M_n)^2\right)}.\]
As 
$Y_n(t)$ is a random perturbation of the deterministic matrix $M_n$, it is an intriguing question to ask how large the time $t$ has to be in order 
to observe the universal local eigenvalue statistics well-known for many classes of random Hermitian matrices as the size $n$ tends to infinity. This question has been addressed in many recent papers, see \cite{bookErdosYau} for an overview.  From an intuitive point of view, if the eigenvalues of $M_n$ are sufficiently dense near a point $x^*$, then the time needed to reach universality is small; near points where the eigenvalues of $M_n$ are less dense or near points where there are large spacings between eigenvalues, the time needed to reach universality is larger. We aim at an explicit description of the mechanism behind the interaction between the initial configuration of eigenvalues and the time needed to reach local universality. In particular, we focus on the case where the limiting eigenvalue density vanishes at an interior point of its support, for which no previous results were available to the best of our knowledge. 
As usual, we 
will measure universality in terms of correlation functions of the eigenvalues.

In order to recapitulate the precise terms, let us consider the GUE \eqref{def_GUE} ($M_n=0$ and $t=1$ in \eqref{def_matrix}) as an example.  For $1\leq k\leq n$, the $k$-th correlation function 
$\rho^k_n$ of a density $P_n$ on $\R^n$ is the following multiple of the $k$-th marginal density
\begin{align}
\rho_n^k(x_1,\dots,x_k):=\frac{n!}{(n-k)!}\int_{\R^{n-k}}P_n(x_1,\dots,x_n)dx_{k+1}\dots dx_n.\label{correlation_functions}
\end{align}
The joint density $P_n$ of the eigenvalues of a GUE matrix is proportional to
\begin{align}\label{GUE_ev}
\prod_{i<j}(x_i-x_j)^2e^{-\frac n2\sum_{j=1}^nx_j^2}
\end{align}
and is determinantal, i.e., its correlation functions $\rho_n^k$ can be written in terms of a kernel $K_n$ as
\begin{align}
\rho_n^k(x_1,\dots,x_k)=\det\lb K_n(x_i,x_j)\rb_{1\leq i,j\leq k}.\label{determinantal}
\end{align}
As the matrix on the right-hand side of \eqref{determinantal} is of fixed order $k$, finding the large $n$ asymptotics of the $k$-th correlation function reduces to finding large $n$ asymptotics of the kernel 
$K_n$.
There is a classical formula for $K_n(x,y)$ in terms of Hermite polynomials but for us the following, also well-known, representation as a 
double contour integral is more useful,
\begin{align}\label{GUE_kernel_alternative}
K_n(x, y) = \frac{n}{(2\pi i)^2}\int_{x_0-i\infty}^{x_0+i\infty}dz\int_\gamma dw 
\frac{1}{z-w}\frac{e^{\frac{n}{2}\left( (z-x)^2+2\log z\right)}}{e^{\frac{n}{2} \left((w-y)^2+2\log w\right)}},
\end{align}
where $\g$ encircles the origin and $x_0$ is such that the two contours do not intersect.

In the so-called global regime, one has for $x\in\R$ that the limiting mean density of eigenvalues is almost surely given by the Wigner semicircle density,
\begin{align}\label{SC}
\lim_{n\to\infty}\frac 
1n\rho_n^1(x)=\lim_{n\to\infty}\frac{1}{n}K_n(x,x)=\frac1{2\pi}\sqrt{4-{x}^2}1_{[-2,2]}(x)=:\sigma(x).
\end{align}
The global limiting eigenvalue density is usually model-dependent and not 
universal. In contrast to this, the local correlations of eigenvalues show universal behavior, which for points $x^*\in(-2,2)$ in the bulk 
of the spectrum means 
\begin{align}\label{GUE_correlations}
\lim_{n\to\infty}\frac{1}{n\sigma(x^*)}K_n\lb x^*+\frac u{n\sigma(x^*)},x^*+\frac 
v{n\sigma(x^*)}\rb=\frac{\sin(\pi(u-v))}{\pi(u-v)}.
\end{align}
Here, the convergence is locally uniform for $u,v\in\R$ and the sine kernel at the right-hand side is understood as being 1 if $u=v$. The limit 
\eqref{GUE_correlations} is called local as it concerns correlations on the scale on which eigenvalues around the point $x^*$ have a 
mean spacing of order 1. 

The sine kernel has been found to appear for large classes of Hermitian random matrix models and is thus 
called universal. Given the large amount of research on universality of the sine kernel, we can only give a very partial list of 
references. For the results on bulk universality of the unitary invariant ensembles with density proportional to 
$\exp(-n\textup{Tr}V(M))$ on the matrix space, we mention \cite{Deiftetal} for the Riemann-Hilbert method, 
\cite{PS97,PS08} for an approach closer to mathematical physics and \cite{LevinLubinsky} for a comparative analytic approach. Two 
recent surveys are \cite{Lubinsky-Survey} and \cite{Kuijlaars-Survey}. For bulk universality of Wigner matrices, i.e., random 
matrices with as many independent entries as possible, we refer to \cite{TaoVu} and \cite{Erdosetal10,BEYY, bookErdosYau}.

Apart from these two by now classical situations, the sine kernel has also been shown to appear in a number of different models, among 
them sparse matrices \cite{HLY} and matrices with correlated entries (but without unitary invariance) \cite{AEK}. Other classes are more 
general particle systems with quadratic repulsion \cite{GV14} or several-matrix-models \cite{FG}. Different scalings like the 
unfolding have been considered in \cite{SV15}, giving rates of convergence and extending the uniformity in statements like 
\eqref{GUE_correlations}.

For the model \eqref{def_matrix}, sine kernel universality in the bulk has been shown for fixed $t>0$ under general conditions on $M_n$ \cite{Shcherbina,LeeSchnelliStetlerYau} and for $t$ converging to $0$ as $n\to 0$ at a sufficiently slow rate if $M_n$ is a random matrix \cite{Johansson2, Erdosetal10, LandonSosoeYau, LandonYau}. Near the edge of the spectrum, Airy kernel universality was shown for a large class of matrices $M_n$ in \cite{Shcherbina2} for $t>0$ fixed and for $t\to0$ in \cite{BEYedge,LandonYauedge}. For certain special choices of $M_n$, it is known that other limiting kernels can appear. If $M_n$ has only two distinct eigenvalues, the Pearcey kernel arises at a critical time \cite{BrezinHikami2, BrezinHikami, TracyWidom06, BleherKuijlaars, AdlerOrantinvanMoerbeke}, and generalizations of the Airy kernel can appear at the edge \cite{AdlerDelepinevanMoerbeke, AdlerFerrarivanMoerbeke, BertolaBuckinghamLeePierce1, BertolaBuckinghamLeePierce2}.
Some of these results have been obtained using a representation of the eigenvalue correlation kernel (see \eqref{eq:Kncontour3} below) in terms of multiple orthogonal polynomials. The asymptotic analysis of these multiple orthogonal polynomials can be performed using Riemann-Hilbert methods if the support of $\mu_n$ consists of a small number of points, or in very special situations like equi-spaced points \cite{ClaeysWang}, but so far not for general eigenvalue configurations $\mu_n$.

Let us now turn to our results about the model \eqref{def_matrix}. It is known since works by Br\'ezin and Hikami 
(cf.~\cite{BrezinHikami} and references therein) that the eigenvalue distribution $P_{n,t}$ of $Y_n(t)$ is determinantal in the sense of 
\eqref{determinantal} with a kernel also given as a double contour integral. To write down a formula for the kernel, let 
the 
deterministic eigenvalues of $M_n$ be $a_1^{(n)},\dots,a_n^{(n)}$ and denote the associated empirical spectral measure by $\mu_n$, 
i.e.,
\begin{align*}
\mu_n:=\frac1n\sum_{j=1}^n\delta_{a_j^{(n)}}.
\end{align*}
Moreover let 
\begin{equation}\label{def:g}
g_{\mu_n}(z):=\int\log (z-x) d\mu_n(x),
\end{equation}
where we take the principal branch of the logarithm.
 Then the correlation functions $\rho_{n,t}^k$ of $P_{n,t}$, defined as in \eqref{correlation_functions}, satisfy for $1\leq k\leq n$ 
\begin{align}
\rho_{n,t}^k(x_1,\dots,x_k)=\det\lb K_{n,t}(x_i,x_j)\rb_{1\leq i,j\leq k},\label{determinantal_K_{n,t}}
\end{align}
where the kernel $K_{n,t}$ is defined as
	\begin{align} 
	K_{n,t}(x, y) :=& e^{-\frac{n}{2t}(x^2-y^2)+(x-y)x_0\frac{n}{t}}\frac{n}{(2\pi i)^2t}\int_{x_0-i\infty}^{x_0+i\infty}dz\int_\gamma dw 
\frac{1}{z-w}\frac{e^{\frac{n}{2t}\left( (z-x)^2+2t g_{\mu_n}(z)\right)}}{e^{\frac{n}{2t} \left((w-y)^2+2t g_{\mu_n}(w)\right)}}\notag\\
	&+\frac{1}{\pi (x-y)}\sin\left((x-y)s\frac{n}{t}\right).\label{eq:Kncontour3}
	\end{align}
	Here, $\gamma$ is a contour encircling all the points $a_j^{(n)}$'s in the positive sense and such that $x_0+i\mathbb R$ and $\gamma$ intersect precisely at the two points $\t_1=x_0+is$ and $\t_2=x_0-is$, with $s>0$ and such that the line segment $(\t_1,\t_2)$ lies in the interior of $\gamma$. 
Explicit contour integral formulas for the correlation kernel $K_{n,t}$ go back to \cite{BrezinHikami}, see also \cite{Johansson01}. Our formula \eqref{eq:Kncontour3} is a variant of these existing formulas which is particularly convenient for asymptotic analysis in the bulk, as it is decomposed in a way that suggests convergence to the sine kernel.
For the convenience of the reader, we give a self-contained proof of \eqref{determinantal_K_{n,t}}--\eqref{eq:Kncontour3} in Appendix \ref{appendix}.

Throughout the paper we will make the following assumptions.
\vspace{8pt}

\noindent
\textbf{Assumption 1.} 
The probability measures  $\mu_n$ converge as $n\to\infty$ weakly to an absolutely continuous probability measure $\mu$ with compact 
support and with a density which is continuous as a function restricted to the support.

\vspace{8pt}

\noindent
\textbf{Assumption 2.} 
There is a compact subset of $\R$ which contains the points $a_j^{(n)}$ for all $j$ and all $n$.

\vspace{8pt}

We believe that Assumption 2 is purely technical and could be removed with some extra work.

If $H_n$ is a GUE matrix, then the limiting measure of $\sqrt{t}H_n$ will be the semicircle distribution 
\begin{align*}
d\sigma_t(x):=\frac{1}{2\pi t}\sqrt{4t-x^2}1_{[-2\sqrt{t},2\sqrt{t}]}(x)dx.
\end{align*} 
The eigenvalues of the model $Y_n(t)$ have for any $t$ a global limiting measure $\mu_t$ in the sense of \eqref{SC}. This measure is determined by $\mu$ and 
$\sigma_t$ and is called the additive free convolution of $\mu$ and $\sigma_t$ \cite{AndersonGuionnetZeitouni, HiaiPetz, Speicher, VoiculescuDykemaNica}, in symbols
\begin{align*}
\mu_t:=\mu\boxplus\sigma_t.
\end{align*}
The measure $\mu_t$ has a density for any $t>0$ which will be denoted by $\psi_t$ whereas the density of $\mu$ will be denoted by $\psi$.
As mentioned above, we will focus on bulk correlations. That is, we want to investigate the local correlations around points in 
the interior of the support of $\psi_t$. We aim at an explicit characterization of bulk points in terms of the initial limiting measure $\mu$ instead of an implicit one in terms of the measure $\mu_t$.

For that purpose, we define for any $x\in\mathbb R$
the path $t\mapsto x_t$ with
\begin{align}\label{H_map}
x_t:=x+t\int\frac{(x-u)\psi(u)du}{(x-u)^2+y_{t,\mu}(x)^2},
\end{align}
with $y_{t,\mu}$ given by
\begin{align} \label{def:y_t} 
y_{t,\mu}(x) := \inf \left\{ y > 0 \, \bigg| \, \int \frac{d\mu(s)}{(x-s)^2 + y^2} \leq \frac{1}{t} \right\}.
\end{align}
The definitions of this path and of the map $x\mapsto y_{t,\mu}(x)$ originate in work by Biane \cite{Biane} about the free additive convolution with a semicircle distribution.
It was shown there that the identity
\begin{equation}\label{eq:psitau}
\psi_{t}(x_{t})=\frac{y_{t,\mu}(x)}{\pi t}
\end{equation}
holds, and this implies in particular that $x\mapsto x_t$ defines a bijection between the support of the function $y_{t,\mu}$ and the support of the measure $\mu_t$. Note also that the support of $y_{t,\mu}$ increases with $t$ and that it contains the support of $\mu$ for any $t>0$.

Given $x^*\in\mathbb R$, there are two fundamentally different possibilities: either $\int\frac{d\mu(s)}{(s-x^*)^2}$ is finite, or it diverges. In the first case, it follows from \eqref{def:y_t} 
that $y_{t,\mu}(x^*)=0$ for $t\leq t_{cr}$ with the critical time $t_{cr}$ defined as
\begin{align}\label{t_cr}
t_{cr}=t_{cr}(x^*):=\lb\int\frac{d\mu(s)}{(s-x^*)^2}\rb^{-1}. 
\end{align} 
In the latter case, we set $t_{cr}(x^*):=0$, and then $y_{t,\mu}(x^*)>0$ for all $t>0$.
It is easy to see from \eqref{H_map} that $t\mapsto x_t$ is a linear path for $0<t<t_{cr}$. From \eqref{eq:psitau}, we see that $\psi_t(x_t^*)=0$ for $0<t\leq t_{cr}(x^*)$ and that $\psi_t(x_t^*)>0$ for $t>t_{cr}(x^*)$.

We note that $t_{cr}(x^*)=0$ if $\psi(x^*)>0$ and also if
$x^*$ is a zero of $\psi$ of algebraic order $0<\kappa\leq 1$, i.e.,
\begin{equation}\label{assumption_decay}
\psi(x)\sim C\lv x-x^*\rv^\k,\qquad x\to x^*,
\end{equation}
where $C>0$ is some constant and $\sim$ denotes leading order behavior.
On the other hand, if $x^*$ lies outside the support of $\mu$ and also if $x^*$
is a zero of $\psi$ of algebraic order $\kappa> 1$, we have $t_{cr}(x^*)>0$. We refer to \cite{ClaeysKuijlaarsLiechtyWang} for more details about the behavior of $\psi_t$ in this last case.

Our first result is merely a slight generalization of existing results, but it constitutes an important natural step for understanding and proving our next results.
It deals with fixed times $t$ (independent of $n$) bigger than $t_{cr}$.
\begin{thm}\label{thrm_t_fixed}
Let $\mu_n$ and $\mu$ satisfy Assumptions 1 and 2, let $x^*\in\mathbb R$ and let $t>t_{cr}(x^*)$ be fixed. Uniformly for $u,v$ in compacts 
of $\R$, we have
\begin{align*}
\lim_{n\to\infty}\frac{1}{n\psi_t(x^*_t)}K_{n,t}\lb x^*_t+\frac u{n\psi_t(x^*_t)},x^*_t+\frac 
v{n\psi_t(x^*_t)}\rb=\frac{\sin(\pi(u-v))}{\pi(u-v)},
\end{align*}
where $x_t^*$ is given by \eqref{H_map}.
\end{thm}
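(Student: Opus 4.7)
The plan is a classical steepest descent analysis of the double contour integral in \eqref{eq:Kncontour3}. Introduce the large-$n$ phase $\phi(z;x) := (z-x)^2/(2t) + g_\mu(z)$, so that the integrand carries the exponential $e^{n[\phi_n(z;x) - \phi_n(w;y)]}$ with $\phi_n$ using $\mu_n$ in place of $\mu$. Weak convergence of $\mu_n$, combined with Assumption 2, gives $g_{\mu_n} \to g_\mu$ uniformly on compact subsets of $\C$ at positive distance from the common support. Solving $\phi'(z;x_t^*)=0$ with the ansatz $z = x^* + iy$, the imaginary part of the equation is precisely the defining relation of $y_{t,\mu}(x^*)$ in \eqref{def:y_t}, and the real part reproduces \eqref{H_map}; so the saddle points are $z_\pm = x^* \pm iy^*$ with $y^* := y_{t,\mu}(x^*) > 0$, the positivity following from $t > t_{cr}(x^*)$.

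The second step is to exploit the freedom in formula \eqref{eq:Kncontour3} by choosing $x_0 = x^*$ and $s = y^*$, so that the intersection points $\tau_1, \tau_2$ coincide with the saddles $z_\pm$. Under the rescaling $x = x_t^* + u/(n\psi_t(x_t^*))$ and $y = x_t^* + v/(n\psi_t(x_t^*))$, together with $\psi_t(x_t^*) = y^*/(\pi t)$ from \eqref{eq:psitau}, a direct computation shows that the explicit sine term in \eqref{eq:Kncontour3} equals $n\psi_t(x_t^*)\sin(\pi(u-v))/(\pi(u-v))$, producing the desired sine kernel after division by $n\psi_t(x_t^*)$. The theorem therefore reduces to showing that the prefactor times the double integral, once similarly rescaled and divided, tends to $0$ uniformly in $(u,v)$ on compact sets.

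For that residual term I would deform the vertical contour $x^* + i\R$ to a steepest descent path $\Sigma_z$ for $\textup{Re}\,\phi(\cdot;x_t^*)$ through $z_\pm$, and the loop $\gamma$ to a steepest ascent loop $\Sigma_w$ through the same points, while keeping $\Sigma_w$ encircling all $a_j^{(n)}$'s. This is geometrically feasible because $y^* > 0$ strictly separates the saddles from $\R$ and Assumption 2 confines the eigenvalues to a fixed real compact. On the deformed contours, the uniform convergence of $g_{\mu_n}$ yields $\textup{Re}(\phi_n(z;x_t^*) - \phi_n(w;x_t^*)) \leq -c < 0$ off the saddles. A local change of variables $z = z_+ + \xi/\sqrt n$, $w = z_+ + \eta/\sqrt n$ converts the integrand near $z_+$ into a Gaussian double integral carrying the integrable pole $1/(\xi - \eta)$ with a Jacobian factor $\sqrt n$, so the double integral contribution is at most $O(\sqrt n)$, and therefore $O(n^{-1/2})$ after division by $n\psi_t(x_t^*)$; the $z_-$ contribution is the conjugate and equally negligible. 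The main obstacle is the simultaneous construction of the steepest descent geometry for $\gamma$ at both saddles while globally encircling the $a_j^{(n)}$'s, together with the uniform control of $g_{\mu_n} \to g_\mu$ on the deformed paths (where, thanks to $y^* > 0$, the saddle sits at strictly positive distance from $\textup{supp}(\mu)$).
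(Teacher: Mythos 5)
Your overall strategy (split the kernel into the explicit sine term plus a double integral, show the sine term gives the sine kernel, show the double integral vanishes by saddle-point estimates) is the same as the paper's, and your computation that the sine term reproduces $\sin(\pi(u-v))/\pi(u-v)$ via \eqref{eq:psitau} is correct. However, there is a genuine gap in the treatment of the double integral, rooted in your decision to base the steepest-descent geometry on the \emph{limiting} phase $\phi(z)=(z-x_t^*)^2/(2t)+g_\mu(z)$ and its saddles $z_\pm=x^*\pm iy^*$, rather than on the $n$-dependent phase $\phi_n$ and its saddles.

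The integrand is $e^{\phi_n(z,u)-\phi_n(w,v)}$ where $\phi_n$ carries the factor $n$. The relevant saddles are $z_n=F_{t,\mu_n}\lb x_t^*+\tfrac{u}{c_tn}\rb$, $w_n=F_{t,\mu_n}\lb x_t^*+\tfrac{v}{c_tn}\rb$; they depend on $\mu_n$ and satisfy $\lvert z_n-z_+\rvert = \O(\epsilon_n)$ with $\epsilon_n := \lvert G_{\mu_n}(z_+)-G_\mu(z_+)\rvert$. Under Assumption~1 one only has $\epsilon_n\to 0$, with no rate. Your fixed vertical line $x^*+i\R$ and fixed steepest-ascent loop for $\phi$ do not pass through $z_n$, $w_n$, and the maximum of $\Re\phi_n$ along the fixed $z$-contour exceeds $\Re\phi_n(z_n)$ by a quantity of order $\frac{n}{t}\lvert z_n-z_+\rvert^2 = \O(n\epsilon_n^2)$, because $\phi_n'(z_+)\neq 0$ (it is $\O(n\epsilon_n)$, not $0$). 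Consequently, the local contribution to the double integral is $\exp\lb\O(n\epsilon_n^2)\rb$ times what you claim; without an assumed rate $\epsilon_n=\O(n^{-1/2})$, this need not tend to zero. This is exactly why the paper proves that the $n$-dependent maps $H_{t,\mu_n}$ are conformal near $x^*+iy^*$ for $n$ large, defines $z_n,w_n$ as their exact critical points, puts $x_0=x_n=\Re z_n$ (so the vertical line is an exact descent path for $\phi_n$, obviating any deformation), and takes $\gamma_n$ to be the graph of $y_{t,\mu_n}$ (an exact descent path for $-\phi_n$). All Taylor expansions then are centred at $z_n$, $w_n$, where $\phi_n'$ vanishes identically, and no rate is needed.

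Your approach, if it worked, would have a real advantage: a fixed contour has $\O(1)$ length, removing the need for Lemma~\ref{lemma:length}, which the paper proves precisely because the graph of $y_{t,\mu_n}$ can oscillate and must be bounded in length ($\O(n^3\sqrt t)$) by an algebraic-geometry argument. That length bound then gets beaten by the exponential decay away from the saddles. A second, smaller, inaccuracy: formula \eqref{eq:Kncontour3} requires the $z$-contour to be a vertical line $x_0+i\R$; you cannot freely ``deform the vertical contour $\ldots$ to a steepest descent path $\Sigma_z$'' without re-deriving the formula, because of the pole at $z=w$. The paper resolves this by showing the vertical line through the $n$-dependent abscissa $x_n$ is already a descent path for $\phi_n$, so no deformation is required. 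To repair your argument you would either have to assume a rate of convergence (e.g.\ $n\epsilon_n^2\to 0$), or centre everything on the $n$-dependent saddles $z_n,w_n$ and use $n$-dependent contours as the paper does.
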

\begin{rmk}
The essence of the proof of Theorem \ref{thrm_t_fixed} goes back to the work of Johansson \cite{Johansson01} which itself has been inspired by 
the work of Brezin and Hikami \cite{BrezinHikami}. In \cite{Johansson01}, $M_n$ is a Wigner matrix (independent of $H_n$), i.e., $M_n$ is a random 
Hermitian matrix with as many independent entries as Hermitian symmetry allows. As the limiting measure for Wigner matrices with entries 
having mean $0$ and the same variance $s>0$ is always the semicircle distribution $\sigma_s$ and $\sigma_s\boxplus\sigma_t=\sigma_{s+t}$, Johansson's 
proof does not deal with a general $\psi$. It has been subsequently extended to sample covariance matrices \cite{BenArousPeche} and Wigner matrices under weak moment assumptions \cite{Johansson2}. A variant of Theorem \ref{thrm_t_fixed} that covers also real symmetric matrices has been shown in \cite{LeeSchnelliStetlerYau}. In that paper, $\psi$ is such that $\psi_t$ has a connected compact support with strictly positive density on the interior. The case of a general $\psi$ was studied by T. Shcherbina in \cite{Shcherbina}, where she proved bulk universality via the method of supersymmetry. Both \cite{LeeSchnelliStetlerYau,Shcherbina} characterize the bulk points as interior of the support of the density $\psi_t$, whereas in Theorem \ref{thrm_t_fixed} the map $t\mapsto x^*_t$ gives information on the origin of the point in terms of $\psi$ and $t$. Theorem \ref{thrm_t_fixed} also covers cases where 
$\psi(x^*)=0$ but $\psi_t(x^*_t)>0$. 
For instance, our result applies also when $\psi$ vanishes at an interior point of its support with exponent $\kappa\leq 1$, or with exponent $\kappa>1$ if in addition $t>t_{cr}$.
\end{rmk}

Theorem \ref{thrm_t_fixed} shows that for Dyson's Brownian motion the time to local universality in the bulk is at most $\O(1)$. 
However, Dyson envisioned in \cite{Dyson} that the universal correlations (the ``local thermodynamic equilibrium'') should be 
reached already on time scales slightly bigger than $\O(1/n)$ for a large class of families of measures $\mu_n$. This issue has been addressed in more recent years by Erd\H{o}s, Yau and many collaborators, and 
led to their celebrated proofs of universality of $\beta$- and Wigner ensembles in different symmetry classes (see \cite{bookErdosYau, LandonSosoeYau,Erdosetal10, LandonYau} and references therein). 
The main condition of Theorem \ref{thrm_t_fixed}, namely weak convergence of $\mu_n$ to $\mu$, is however not sufficient to have local universality on small time-scales, and the time to reach sine kernel universality depends in a more subtle way on the distribution of the points $a_k^{(n)}$. To make this precise, we define quantiles of $\mu$ as follows: we let $q_k^{(n)}\in\R$ be such that
\begin{equation}\label{def:quantiles}
\int_{-\infty}^{q_k^{(n)}}d\mu(s)=\frac{k-\frac{1}{2}}{n}\quad \mbox{for $k=1,\ldots, n$}.
\end{equation}
If $\mu$ is supported on a single interval, these numbers are uniquely defined since $\mu$ is absolutely continuous. If the support of $\mu$ is disconnected, it could happen that some of the values $q_k^{(n)}$ are not uniquely defined. In such cases, we have the freedom to take $q_k^{(n)}$ to be any value satisfying \eqref{def:quantiles}.

\vspace{8pt}

We now define the positive sequence $m_n/n$ as the maximal deviation, over $k=1,\ldots, n$, of a point $a_k^{(n)}$ from its corresponding quantile $q_k^{(n)}$,
in other words
\begin{equation}\label{assumption:rigidity}
m_n:= n \ \max_{1\leq k\leq n}\left|a_k^{(n)}-q_{k}^{(n)}\right|.
\end{equation}	
It should be noted that $m_n$ in general depends on the particular choice of those quantiles that are not uniquely defined by \eqref{def:quantiles} (in the case that the support is not connected). The sequence $m_n$ can be interpreted as a measure for the global rigidity of the eigenvalues $a_k^{(n)}$ with respect to the measure $\mu$. 
The simplest example to keep in mind, is the case
where $a_k^{(n)}=q_{k}^{(n)}$ for $1\leq k\leq n$, such that $m_n=0$.
In general, weak convergence of $\mu_n$ to $\mu$ does not imply a bound on the sequence $m_n$, and it can happen that $m_n$ grows as $n\to\infty$.

\begin{rmk}
It is straightforward to verify that \eqref{assumption:rigidity} implies for the Kolmogorov distance between $\mu_n$ and $\mu$ that there exists a constant $c>0$, depending on $\mu$ but not on $n$, such that
\begin{equation}\label{eq:FnF}
\frac{\tilde m_n}{n}:=\sup_{x\in\R}|F_n(x)-F(x)|\leq \frac{c(m_n+1)}{n},
\end{equation}
where $F_n$ and $F$ are the distribution functions of $\mu_n$ and $\mu$, respectively. Also, for any interval $I$, we have
\begin{equation}\label{eq:comparemunmu}
\left|\mu_n(I)- \mu(I)\right|\leq \frac{2\tilde m_n}{n}\leq \frac{2c(m_n+1)}{n}.
\end{equation}
These two facts will be crucial in the proofs of our results.
\end{rmk}

In our next result, we show that sine kernel universality is obtained near $x^*$ for times $t_n$ which decay slower than $(m_n+1)/n$ and slower than $(\log n)^{1+\rho}/n$ for some $\rho>0$, if $x^*$ is an interior point of the support of $\mu$ where its density is positive. Although this is not surprising in view of recent results in e.g.\ \cite{Erdosetal10, LandonSosoeYau} (see also Remark \ref{rmkErdosYau} below), we believe that it is of interest to state and prove this result under explicit conditions on the distribution $\mu_n$ via the sequence $m_n$.
\begin{thm}\label{thm_nv}
Let $\mu_n$ and $\mu$ satisfy Assumptions 1 and 2, and let $x^*$ belong to the interior of the support of $\mu$ and be such that 
$\psi(x^*)>0$. Let $t_n$ satisfy $t_n\to 0$, $\frac{nt_n}{(\log n)^{1+\rho}}\to\infty$, and $\frac{nt_n}{m_n+1}\to\infty$ as $n\to\infty$, for some $\rho>0$. Uniformly for $u,v$ in 
compacts of $\mathbb R$, we have
\begin{align*}
\lim_{n\to\infty}\frac{1}{n\psi_{t_n}(x^*_{t_n})}K_{n,t_n}\lb x^*_{t_n}+\frac u{n\psi_{t_n}(x^*_{t_n})},x^*_{t_n}+\frac 
v{n\psi_{t_n}(x^*_{t_n})}\rb=\frac{\sin(\pi(u-v))}{\pi(u-v)},
\end{align*}
with $x_{t_n}^*$ given by \eqref{H_map}.
\end{thm}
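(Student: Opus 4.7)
My plan is to adapt the steepest-descent analysis of the double contour integral in \eqref{eq:Kncontour3}, as used in the proof of Theorem \ref{thrm_t_fixed}, to the regime $t_n\to 0$. The allowed decay of $t_n$ is dictated by two separate error sources: the displacement of the relevant saddle points when $\mu$ is replaced by $\mu_n$ in the exponent, and the approach of these saddles to the real axis at rate $\sim t_n$.

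First I would fix $x_0=x^*$ and $s=y_{t_n,\mu}(x^*)$ in \eqref{eq:Kncontour3}, so that the two intersection points $\tau_\pm=x^*\pm i\, y_{t_n,\mu}(x^*)$ of the contours are exactly the complex-conjugate saddles of
$$\phi(z):=\frac{(z-x^*_{t_n})^2}{2t_n}+g_\mu(z);$$
indeed, \eqref{def:y_t} is the vanishing of $\mathrm{Im}\,\phi'(\tau_+)$ and \eqref{H_map} is the vanishing of $\mathrm{Re}\,\phi'(\tau_+)$. Substituting $x=x^*_{t_n}+u/(n\psi_{t_n}(x^*_{t_n}))$ and $y=x^*_{t_n}+v/(n\psi_{t_n}(x^*_{t_n}))$ into the sine term of \eqref{eq:Kncontour3}, and using \eqref{eq:psitau} in the form $sn/t_n=\pi n\psi_{t_n}(x^*_{t_n})$, shows that the sine term equals exactly $n\psi_{t_n}(x^*_{t_n})\cdot\sin(\pi(u-v))/(\pi(u-v))$. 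Thus after the prescribed rescaling the sine term already produces the sine kernel on the right-hand side, and it suffices to show that the prefactored double contour integral of \eqref{eq:Kncontour3}, divided by $n\psi_{t_n}(x^*_{t_n})$, tends to $0$ uniformly on compacts.

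Next I would carry out the saddle-point analysis. A direct computation, exploiting the fact that for small $y$ the mass of $\mu$ on the scale $y$ around $s=x^*$ dominates, gives $\phi''(\tau_\pm)\asymp 1/t_n$, so the Laplace scale at each saddle is $\sqrt{t_n/n}$. This is much smaller than the saddle separation $|\tau_+-\tau_-|=2y_{t_n,\mu}(x^*)\asymp t_n$ exactly when $nt_n\to\infty$, which is part of the hypothesis. The classical steepest-descent argument then localises the double integral around the four pairs $(\tau_\varepsilon,\tau_{\varepsilon'})$ with $\varepsilon,\varepsilon'\in\{+,-\}$. At the two diagonal pairs the singularity $1/(z-w)$ is integrated against a two-dimensional Gaussian and yields a contribution of order $\sqrt{t_n/n}$; at the two off-diagonal pairs the integrand is regular, giving $(t_n/n)/|\tau_+-\tau_-|\asymp 1/n$ times rapidly oscillating phases $e^{\pm 2ni\,\mathrm{Im}\,\phi(\tau_+)}$. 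Multiplying by the prefactor $n/((2\pi i)^2 t_n)$ and dividing by $n\psi_{t_n}(x^*_{t_n})\asymp n\psi(x^*)$ turns these into $O(1/(\psi(x^*)\sqrt{nt_n}))$ and $O(1/(\psi(x^*)nt_n))$ respectively, both $o(1)$ under $nt_n\to\infty$.

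It remains to justify the replacement of $\mu$ by $\mu_n$ in the exponent and to globalise the contour estimates. Integration by parts combined with the Kolmogorov bound \eqref{eq:FnF} yields
$$\bigl|g_{\mu_n}(z)-g_\mu(z)\bigr|\leq C\,\frac{m_n+1}{n}\Bigl(1+\log\tfrac{1}{|\mathrm{Im}\,z|}\Bigr)$$
on a fixed complex neighbourhood of the support of $\mu$, with analogous bounds for derivatives. At height $|\mathrm{Im}\,z|\asymp t_n$ this produces an exponential error $e^{O((m_n+1)(1+|\log t_n|))}$, which is $1+o(1)$ under the two hypotheses $nt_n/(m_n+1)\to\infty$ and $nt_n/(\log n)^{1+\rho}\to\infty$ (the logarithm absorbed by the second, the $m_n+1$ by the first). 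The same bound applied to $\phi_n'$ shows the true saddles of $\phi_n$ are displaced from $\tau_\pm$ by $o(\sqrt{t_n/n})$, so the saddle geometry survives. The tails of $x^*+i\R$ and of $\gamma$ away from $\tau_\pm$ are handled by showing $\mathrm{Re}\,\phi_n$ stays bounded below away from its value at $\tau_+$, using convexity of $(z-x^*_{t_n})^2/(2t_n)$ in the imaginary direction and uniform boundedness of $g_{\mu_n}$ via Assumption 2. The hardest component, in my view, is precisely this last step: engineering a single global contour $\gamma$ that encircles all the $a_k^{(n)}$, is steepest-descent through both $\tau_\pm$, and keeps $\mathrm{Re}\,\phi_n$ along $\gamma$ strictly larger than its value at $\tau_\pm$ outside small neighbourhoods, uniformly as $t_n\to 0$ within the stated window.
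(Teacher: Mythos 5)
Your strategy deviates from the paper's in one crucial place, and that deviation is where the argument breaks. You propose to fix $x_0=x^*$ and $s=y_{t_n,\mu}(x^*)$, i.e.\ to run the contours through the saddles of the \emph{limiting} phase $\phi$ built from $\mu$, and to control the discrepancy with $\mu_n$ afterwards. The paper instead runs the contours through the saddles of the \emph{actual} phase $\phi_n$, namely $z_n=F_{t_n,\mu_n}(x^*_{t_n}+u/(c_{t_n}n))$ with $x_0=\Re z_n$, and follows the graph of $y_{t_n,\mu_n}$ for $\gamma_n$. Because the contour then passes exactly through the critical points of the integrand, the comparison between $\mu_n$ and $\mu$ only ever enters through the Stieltjes transform $G_{\mu_n}-G_\mu$ (Lemma \ref{lemma:comparisonG}), where the error is $\mathcal O\big(\frac{m_n+1}{nt_n}\big)=o(1)$, and through bounds on $y_{t_n,\mu_n}$ (Lemmas \ref{lemma:boundy}--\ref{lemma limyn}); the primitive $g_{\mu_n}-g_\mu$ never has to be controlled in the exponent.

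Your version does require controlling $n(g_{\mu_n}(z)-g_\mu(z))$ at $\Im z\asymp t_n$, and the bound you state, $|g_{\mu_n}(z)-g_\mu(z)|\lesssim\frac{m_n+1}{n}\log\frac{1}{|\Im z|}$, gives an exponent of size $(m_n+1)\log(1/t_n)$. You then assert this is $1+o(1)$ ``with the logarithm absorbed by the second hypothesis and the $m_n+1$ by the first,'' but neither hypothesis gives any control over the \emph{product} $(m_n+1)\log(1/t_n)$: even in the benign case $m_n$ bounded, $\log(1/t_n)\to\infty$, so this factor is unbounded, not $1+o(1)$. Likewise, your claim that the true $\phi_n$-saddles are displaced from $\tau_\pm$ by $o(\sqrt{t_n/n})$ is false in general: one has $\phi_n'(\tau_+)=G_{\mu_n}(\tau_+)-G_\mu(\tau_+)=\mathcal O\big(\frac{m_n+1}{nt_n}\big)$ and $\phi_n''\asymp t_n^{-1}$, whence the displacement is $\mathcal O\big(\frac{m_n+1}{n}\big)$. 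Comparing to the Laplace scale $\sqrt{t_n/n}$ requires $(m_n+1)^2=o(nt_n)$, but the hypothesis only gives $m_n+1=o(nt_n)$. So when $m_n\to\infty$ (which the theorem certainly allows), the saddle geometry does \emph{not} survive in your setup: the phase at $\tau_\pm$ differs from that at the true saddle by an amount of order $\frac{(m_n+1)^2}{nt_n}$, which may blow up, and the localization step fails. These two problems are not incidental; they are the exact reason the paper builds the contour from $y_{t_n,\mu_n}$ rather than $y_{t_n,\mu}$. The price the paper pays is that its sine term is only asymptotically the sine kernel (Lemma \ref{lemma limyn} is needed to show $s/(c_{t_n}t_n)\to\pi$), whereas in your normalization the sine term is exactly the sine kernel; but that convenience is far outweighed by the loss of control over the double integral.
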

\begin{rmk}\label{rmkErdosYau}Results of a similar nature have been obtained in the study of bulk universality for Wigner random matrices. 
In \cite{LandonSosoeYau}, a version of this theorem using vague convergence is proved under general but more implicit assumptions on the initial eigenvalues. In that paper, the 
authors assume the so-called local law down to a scale $o(t_n)$. A similar result has also been obtained in \cite[Proposition 3.3]{Erdosetal10}, using similar saddle point methods as ours, but stated under more implicit conditions involving the Stieltjes transforms of $\mu_n$ and $\mu$. Note that Theorem \ref{thm_nv} slightly improves the lower bound on the time to universality from $\O(n^{-1+\epsilon})$ for some $\epsilon>0$ in \cite{LandonSosoeYau,Erdosetal10} to $\O(n^{-1}(\log n)^{1+\rho})$ for some $\rho>0$.
\end{rmk}
\begin{rmk}
Although we defined, for technical reasons, $m_n$ in \eqref{assumption:rigidity} as the global maximal deviation from the quantiles over all eigenvalues $a_k^{(n)}$, we believe that especially the deviations $\left|a_k^{(n)}-q_k^{(n)}\right|$ for those quantiles lying close to $x^*$ are important. 
\end{rmk}

The intuition behind the interplay between the behavior of $t_n$ and that of $m_n$ as $n\to\infty$, is that there might be gaps of mesoscopic size bigger than $\mathcal O(n^{-1})$ between eigenvalues $a_k^{(n)}$ if $m_n$ tends to infinity, and one cannot expect convergence to the sine kernel as long as such mesoscopic gaps are present. The larger such a gap, the longer it will take before it is removed by the process.
These heuristics will be confirmed and made 
precise in Theorem \ref{thrm_counterexample} below.

We now focus on the situation of a point $x^*$ in the interior of the support of $\psi$ with $\psi(x^*)=0$. 
We assume that $\psi$ vanishes at $x^*$ with exponent $\kappa<1$. Our next theorem shows that for initial configurations that are sufficiently close to quantiles, a time slightly larger than 
$\left(\frac{m_n+1}{n}\right)^{\frac{1-\k}{1+\k}}$ is enough to reach bulk universality, see Figure \ref{figure2} for an illustration.

\begin{thm}\label{thrm_v}
Let $\mu_n$ and $\mu$ satisfy Assumptions 1 and 2. Let $x^*\in\R$ be a point in the interior of the support of $\mu$ such that 
\eqref{assumption_decay} holds with $0<\k<1$. Let $t_n$ satisfy $t_n\to0$, $\frac{n t_n^{\frac{1+\k}{1-\k}}}{(\log n)^{1+\rho}} \to\infty$, and $\frac{n t_n^{\frac{1+\k}{1-\k}}}{m_n+1} \to\infty$ as $n\to\infty$, for some \(\rho>0\). 
Then locally uniformly in $u,v$ 
\begin{align*}
\lim_{n\to\infty}\frac{1}{n\psi_{t_n}(x^*_{t_n})}K_{n,t_n}\lb x^*_{t_n}+\frac u{n\psi_{t_n}(x^*_{t_n})},x^*_{t_n}+\frac 
v{n\psi_{t_n}(x^*_{t_n})}\rb=\frac{\sin(\pi(u-v))}{\pi(u-v)},
\end{align*}
with $x_{t_n}^*$ as in \eqref{H_map}.
\end{thm}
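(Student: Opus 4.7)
The plan is to follow the saddle-point strategy used for Theorems \ref{thrm_t_fixed} and \ref{thm_nv}, applied to the double contour integral representation \eqref{eq:Kncontour3}, but with estimates adapted to the vanishing-density regime. Writing the effective phase as
$$f_n(z) := \tfrac{1}{2t_n}(z - x^*_{t_n})^2 + g_{\mu_n}(z),$$
the exponential in \eqref{eq:Kncontour3} takes the form $e^{n(f_n(z) - f_n(w))}$, up to the exponential prefactor depending on $x_0$ and the rescaled variables. The saddle-point equation for the limiting phase, namely with $\mu_n$ replaced by $\mu$, is solved at the complex conjugate pair $z_\pm = x^*_{t_n} \pm i\, y_{t_n,\mu}(x^*)$, as follows by separating real and imaginary parts and invoking \eqref{H_map}--\eqref{def:y_t}.

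The first step is to pin down the small-$t$ asymptotics of $y_{t,\mu}(x^*)$ and $\psi_t(x_t^*)$. Under \eqref{assumption_decay} with $0<\k<1$, the substitution $s = x^*+yu$ in $\int d\mu(s)/((x^*-s)^2+y^2)$ shows that this integral behaves like $c_\k\, y^{\k-1}$ as $y\downarrow 0$. Setting it equal to $1/t$ yields
$$y_{t,\mu}(x^*) \sim c_\k'\, t^{1/(1-\k)}, \qquad \psi_t(x_t^*) \sim \tfrac{c_\k'}{\pi}\, t^{\k/(1-\k)},$$
the second identity coming from Biane's relation \eqref{eq:psitau}. Consequently the microscopic scale $1/(n\,\psi_{t_n}(x^*_{t_n}))$ is of order $n^{-1} t_n^{-\k/(1-\k)}$, while the imaginary parts of the saddles are of order $t_n^{1/(1-\k)}$.

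For the steepest descent analysis I deform the vertical contour $x_0 + i\R$ to pass through $\t_{1,2} = x^*_{t_n} \pm i\, y_{t_n,\mu}(x^*)$ and choose $\g$ through the same points while enclosing all $a_j^{(n)}$, as in the proof of Theorem \ref{thrm_t_fixed}. After the change of variables $x = x^*_{t_n}+u/(n\psi_{t_n}(x^*_{t_n}))$, $y = x^*_{t_n}+v/(n\psi_{t_n}(x^*_{t_n}))$, a quadratic expansion of $f_n$ at $\t_1$ and $\t_2$ yields oscillatory contributions $e^{\pm i\pi(u-v)}$, whose combination with the explicit sine term in \eqref{eq:Kncontour3} produces $\sin(\pi(u-v))/(\pi(u-v))$ in the limit. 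The structure of this calculation is identical to that in Theorems \ref{thrm_t_fixed} and \ref{thm_nv}; only the scaling of the saddle height, and the corresponding rescaling factor $\psi_{t_n}(x^*_{t_n})$, differ.

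The main difficulty is controlling the error incurred by replacing $g_{\mu_n}$ with $g_\mu$ in the phase. Integration by parts combined with \eqref{eq:FnF} gives the pointwise bound
$$n\,|g_{\mu_n}(z) - g_\mu(z)| \leq c(m_n+1)\int_{\mathrm{supp}\,\mu}\frac{ds}{|z-s|},$$
but the decisive quantitative constraint is that $\mu_n$ must accurately reproduce the mass of $\mu$ on a mesoscopic interval around $x^*$ of size comparable to the saddle height. Since \eqref{assumption_decay} gives $\mu((x^*-y,x^*+y)) \sim c\, y^{1+\k}$ and \eqref{eq:comparemunmu} controls the discrepancy by $c(m_n+1)/n$, one needs
$$n\, y_{t_n,\mu}(x^*)^{1+\k} \gg m_n + 1,$$
which, together with the asymptotic $y_{t_n,\mu}(x^*)\sim c_\k' t_n^{1/(1-\k)}$, is precisely the hypothesis $n\, t_n^{(1+\k)/(1-\k)}/(m_n+1)\to\infty$. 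The extra $(\log n)^{1+\rho}$ factor absorbs the residual logarithmic terms from the integral against $1/|z-s|$ outside a neighborhood of $x^*$. Executing this error control uniformly on the deformed contours, in particular on the portion of $\g$ that traverses the region where $\psi$ is small or vanishes, is where the bulk of the technical work of the proof concentrates.
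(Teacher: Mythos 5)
Your high-level strategy (saddle-point analysis of the double contour integral \eqref{eq:Kncontour3}, with the asymptotics $y_{t,\mu}(x^*)\sim c_\kappa' t^{1/(1-\kappa)}$ and $\psi_t(x^*_t)\sim (c_\kappa'/\pi)\,t^{\kappa/(1-\kappa)}$, and the heuristic that the rigidity condition ensures $\mu_n$ correctly reproduces the mass of $\mu$ on an interval of size $y_{t_n,\mu}(x^*)$) is in line with the paper. However, there is a genuine structural gap in the way you set up the steepest descent. You propose to deform the contours through the \emph{limiting} saddle points $\tau_{1,2}=x^*_{t_n}\pm i\,y_{t_n,\mu}(x^*)$ and to handle the discrepancy via the pointwise bound $n|g_{\mu_n}(z)-g_\mu(z)|\lesssim (m_n+1)\int d s/|z-s|$. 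That will not work directly: the phase in \eqref{eq:Kncontour3} involves $g_{\mu_n}$, and a curve that is a descent path for the $\mu$-phase is in general not a descent path for the $\mu_n$-phase. Near the real axis, where the saddles sit at height $\mathcal{O}(t_n^{1/(1-\kappa)})\to 0$, an uncontrolled sign of $\Re\{\phi_n(w,v)-\phi_n(w_n,v)\}$ along the contour is fatal. The paper avoids this by taking the exact $\mu_n$-saddle points $z_n=F_{t_n,\mu_n}(x^*_{t_n}+u/(c_{t_n}n))$, $w_n=F_{t_n,\mu_n}(x^*_{t_n}+v/(c_{t_n}n))$ and building $\gamma_n$ from the graph of $y_{t_n,\mu_n}$ itself, so that descent is automatic; the rigidity condition is then invoked only to compare these $n$-dependent objects with the $\mu$-quantities (Lemmas \ref{Lemma4.1}--\ref{Lemma4.8}: lower and upper bounds on $y_{t_n,\mu_n}$, boundedness of $y_{t_n,\mu_n}'$, the location estimates $|x_n-x^*|$ and $|z_n-w_n|=\mathcal{O}((m_n+1)/(n t_n^{\kappa/(1-\kappa)}))$, and the phase-difference estimate in Lemma \ref{Lemma4.8}).

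Two further points are off. First, the $(\log n)^{1+\rho}$ hypothesis does not exist to absorb logarithms from $\int ds/|z-s|$; it is what makes the localization scale $L_n=\sqrt{t_n/n}\,(\log n)^{(1+\rho)/2}$ satisfy simultaneously $L_n=o(t_n^{1/(1-\kappa)})$ (so the neighborhoods of $z_n$ and $\overline{z_n}$ do not overlap, forcing exactly $nt_n^{(1+\kappa)/(1-\kappa)}/(\log n)^{1+\rho}\to\infty$) and $\beta_n L_n^2\asymp (\log n)^{1+\rho}\to\infty$ (so the Gaussian tail beats the polynomial factor $\mathcal O(n^{3}\sqrt{t_n})$ coming from Lemma \ref{lemma:length}). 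Second, in the decomposition \eqref{eq:Kncontour3} the sine kernel comes \emph{entirely} from the explicit sine term $A_n$, via $s/(c_{t_n}t_n)\to\pi$; the double integral $I_n$ is shown to vanish outright. Your description of ``oscillatory contributions $e^{\pm i\pi(u-v)}$ from the saddles combining with the sine term'' mixes the naive single-term saddle picture with the paper's split representation and would send the proof down the wrong track. You would need to replace your deformation-to-$\mu$-contours step with the $\mu_n$-based construction and supply the analogues of Lemmas \ref{Lemma4.3}, \ref{Lemma4.4}, \ref{Lemma4.6} and \ref{Lemma4.8} to make the argument close.
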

\begin{figure}[h]
\centering
      \begin{minipage}[t]{0.45\linewidth}
			\centering
			 \includegraphics[trim=5cm  5cm 5cm 9cm,clip,width=\linewidth]{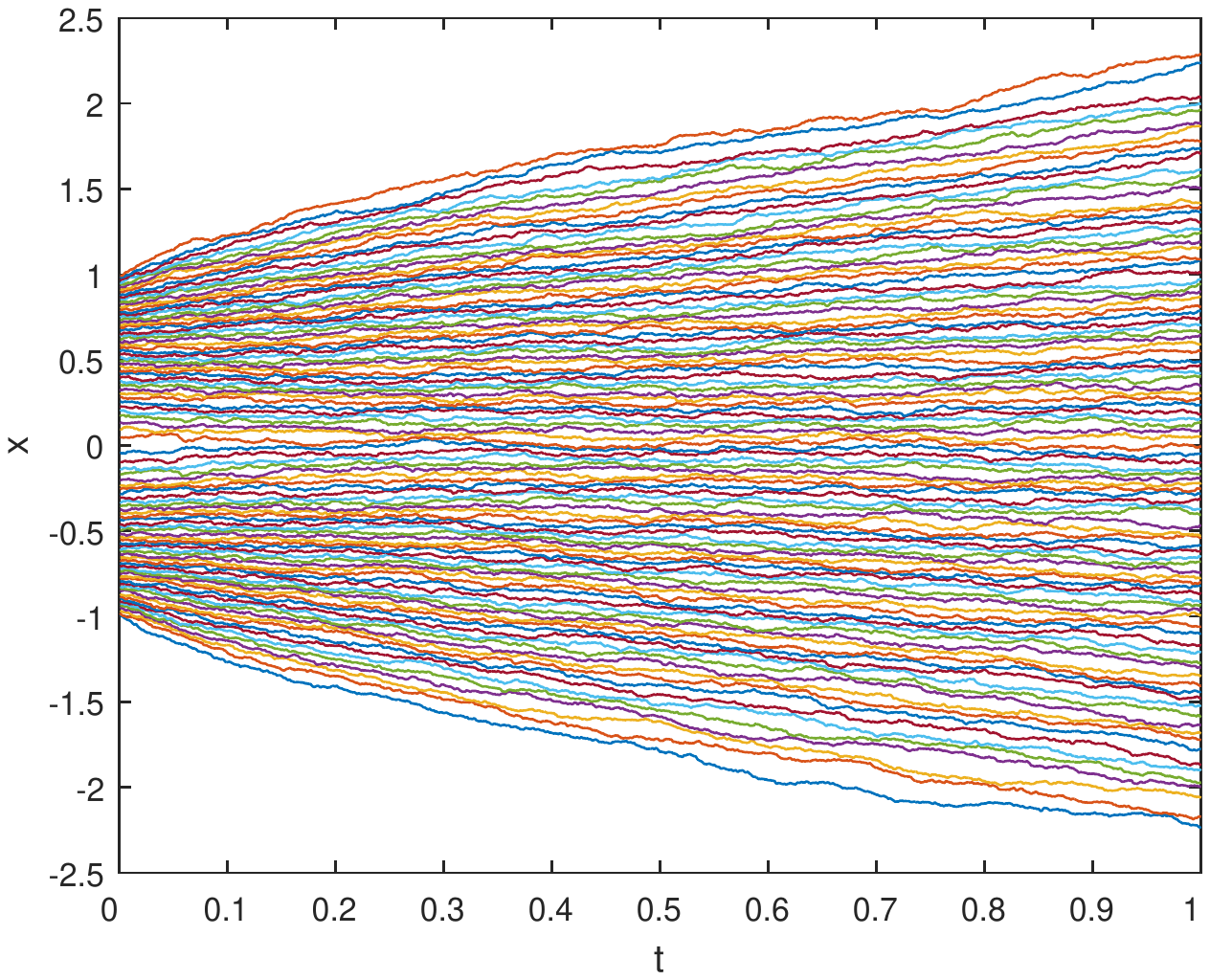}
			\end{minipage}%
			\hfill
			\begin{minipage}[t]{0.45\linewidth}
			\centering
\includegraphics[trim=5cm  5cm 5cm 9cm,clip,width=\linewidth]{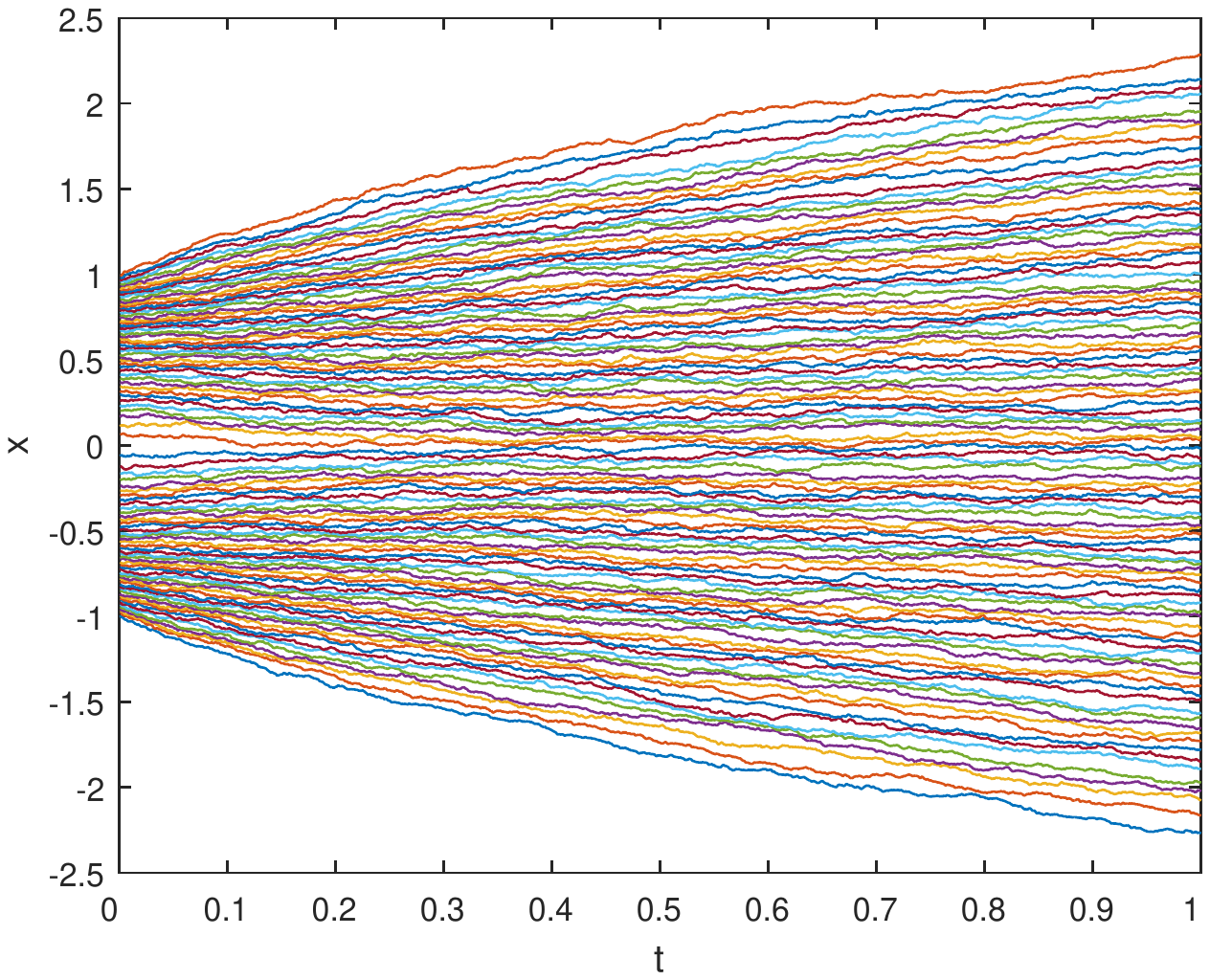} 
			\end{minipage} \vspace{-5em}\caption{Two samples of Dyson's Brownian motion for $t\in[0,1]$ and $n=100$, with $M_n$ having eigenvalues which are quantiles of the measures $d\mu(x)= \frac{3}{4}|x|^{1/2}dx$ (left) and $d\mu(x)= \frac{5}{6}|x|^{2/3}dx$ (right) on $[-1,1]$. For $t$ small, it is clearly visible that the paths near $0$ behave differently.}
			\label{figure2}
\end{figure} 
\begin{rmk}\label{rmkkappabig}
If $x^*$ is a point where the density of $\mu$ vanishes with exponent $\kappa>1$, we already mentioned that the density of the free additive convolution $\mu_t=\mu\boxplus\sigma_t$ has a zero for $t$ smaller than $t_{cr}>0$, see \cite{Biane} and \cite{ClaeysKuijlaarsLiechtyWang} for a detailed description if $\kappa$ is an even integer. Then, one cannot expect sine kernel universality to hold for $t\leq t_{cr}$, which means that the time to reach local universality is drastically bigger than in the case $\kappa<1$.
In the thresholding case $\k=1$ in 
\eqref{assumption_decay}, we expect that the time to reach bulk universality will tend to $0$ at a slow rate as $n\to\infty$, but we do not deal with this particular case in this paper.
\end{rmk}

\begin{rmk}
We emphasize that if we replace the conditions \(\frac{n t_n }{m_n+1}\to\infty\) in Theorem \ref{thm_nv} and \(\frac{n t_n^{\frac{1+\kappa}{1-\kappa}} }{m_n+1}\to\infty\) in Theorem \ref{thrm_v} by the slightly weaker conditions \(\frac{n t_n }{\tilde{m}_n}\to\infty\) and \(\frac{n t_n^{\frac{1+\kappa}{1-\kappa}} }{\tilde{m}_n}\to\infty\), respectively, then the statements of the theorems remain valid (which follows from our proofs). However, we decided to state the theorems in terms of more natural explicit conditions on the initial configurations.
\end{rmk}

In our last result, we show that the behavior of the sequence $m_n$ is crucial in Theorem \ref{thm_nv} and Theorem \ref{thrm_v}. To that end, we will consider initial configurations $\mu_n$ which have a gap of  size $\delta_n$ near a point $x^*$, with $n\delta_n^2\to\infty$. We then show that this gap propagates along the path \eqref{H_map} for times $t_n$ which are $o(\delta_n^2)$. This is illustrated in Figure \ref{figure3} below.

\begin{thm}\label{thrm_counterexample}
Let $\mu_n=\frac{1}{n}\sum_{k=1}^n a_k^{(n)}$ for some points $a_k^{(n)}$, $k=1,\ldots, n$, $n\in\mathbb N$.
Let $\delta_n$ be a sequence converging to $0$ in such a way that $n\delta_n^2\to\infty$, as $n\to\infty$. Suppose that $x^*\in\R$ is such that the intervals $[x^*-\delta_n, x^*+\delta_n]$ do not contain any of the starting points 
$a_1^{(n)},\ldots, a_n^{(n)}$.
If $\epsilon_n=o(\delta_n)$ and if $t_n=o(\delta_n^2)$, as $n\to\infty$, we have locally uniformly in $u,v$,
\[\lim_{n\rightarrow \infty}\epsilon_n K_{n,t_n}\left(x^*_{t_n}+\epsilon_n u, x^*_{t_n}+\epsilon_n v\right)=0.\]
\end{thm}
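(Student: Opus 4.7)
The plan is to analyze the kernel formula \eqref{eq:Kncontour3} by steepest descent, exploiting the geometry of the critical points of the phase $\phi(z;x):=\tfrac{1}{2}(z-x)^2 + t_n g_{\mu_n}(z)$. A saddle of this phase satisfies $(z-x) + t_n G_{\mu_n}(z) = 0$, where $G_{\mu_n}(z)=\int\frac{d\mu_n(s)}{z-s}$. The key observation, which distinguishes this scenario from Theorems \ref{thrm_t_fixed}--\ref{thrm_v}, is that near $x^*$ no complex saddle exists. Indeed, a putative complex saddle $z = x + iy$ with $y > 0$ requires $1 = t_n\int\frac{d\mu_n(s)}{(x-s)^2+y^2}$, but for $x = x^*_{t_n} + \O(\epsilon_n)$, using the gap assumption together with $|x^*_{t_n} - x^*| = \O(t_n)$ from \eqref{H_map} and $\epsilon_n = o(\delta_n)$, we get $|x - a_k^{(n)}| \geq \delta_n/2$ for $n$ large, so the right-hand side is at most $4t_n/\delta_n^2 = o(1)$—a contradiction. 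The unique critical point near $x^*$ is thus a real number $z_*(x)$, with $\phi''(z_*(x);x) = 1 + o(1) > 0$, so the steepest-descent direction at $z_*$ is vertical.

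First I would choose $x_0 = z_*(x^*_{t_n})$ so that the vertical contour in \eqref{eq:Kncontour3} passes through the saddle, and deform $\g$ so that it crosses the vertical line vertically at $\t_{1,2} = x_0 \pm is$, with $s$ chosen on the Gaussian scale $\sqrt{t_n/n}$. The steepest-descent contribution from the double integral turns out to be of order $\sqrt{n/t_n}$, i.e. the same order as the explicit sine term $\sin((x-y)sn/t_n)/[\pi(x-y)]$. The crucial step is that their leading-order parts \emph{cancel exactly}; this is consistent with the elementary fact that $K_{n,t_n}(x,x)$ equals the (bounded) eigenvalue density of $Y_n(t_n)$ at $x$, while each summand individually is of order $\sqrt{n/t_n} \geq \sqrt{n}/\delta_n \to \infty$ in our regime. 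Once this cancellation is carried out and the sub-leading terms are collected—using, for instance, the envelope identity $\partial_x\phi(z_*(x);x) = t_n G_{\mu_n}(z_*(x))$ to linearize the phase in $x - x^*_{t_n}$—one obtains the uniform bound $|K_{n,t_n}(x,y)| = o(1/\epsilon_n)$ for $x, y = x^*_{t_n} + \O(\epsilon_n)$, from which the conclusion follows upon multiplication by $\epsilon_n$.

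The hard part will be making this cancellation rigorous. Both formal contributions exceed the required bound by a factor $\sqrt{n\delta_n^2/t_n}\to\infty$, so the sub-leading terms in the steepest-descent expansion must be tracked with care, and the precise width $s$ of the contour crossing must be selected so that the oscillatory sine term and the Gaussian tails of the double integral combine cleanly. A secondary technical point concerns the contribution to the double integral from portions of $\g$ far from the saddle, near the poles at $a_k^{(n)}$: this is controlled by upper-envelope bounds on $\mathrm{Re}\,\phi(w;y)$ along $\g$, exploiting the strict lower bound $\phi''(z_*) \geq 1/2$ in a neighborhood of $z_*$ (valid since $t_n\int d\mu_n(s)/(z_*-s)^2 \leq 4t_n/\delta_n^2 = o(1)$ by the gap).
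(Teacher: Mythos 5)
Your key observation — that the gap $[x^*-\delta_n, x^*+\delta_n]$ forces the saddle point equation $z - x + t_n G_{\mu_n}(z) = 0$ to have only \emph{real} solutions near $x^*$ — is correct and matches the structural input of the paper (Lemma \ref{lemma:znwn}). However, your strategy from that point on is genuinely different from the paper's, and you yourself flag its central step (the exact cancellation between the double integral's steepest-descent contribution and the explicit sine term, both of order $\sqrt{n/t_n}$) as ``the hard part.'' This is a real gap: you have not shown how to carry out the cancellation, and doing so would require tracking the \emph{subleading} term of a two-dimensional saddle-point expansion with a simple pole $1/(z-w)$ pinned at the real coincident saddle. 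That is delicate precisely because each piece diverges relative to the target bound $o(1/\epsilon_n)$.

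The paper avoids this cancellation entirely by a smarter choice of contour. Since the saddle $z_n$ is real, it takes $x_0 = z_n$ and lets $\gamma_n$ be the graph of $y_{t_n,\mu_n}$ together with its complex conjugate. Because $y_{t_n,\mu_n}$ vanishes on the entire gap (indeed for $\mathrm{dist}(x,\mathrm{supp}\,\mu_n)\geq\sqrt{t_n}$), the contour $\gamma_n$ lies \emph{on the real axis} in a neighborhood of the saddle of size $\gg L_n$, so the intersection parameter is $s = \Im z_n = 0$ and the sine term $A_n(u,v)$ is identically zero — no cancellation argument is needed. For the remaining double integral $I_n$, the parts of $\gamma_n$ on the real axis contribute nothing (they cancel against their conjugate arcs), so the effective $w$-contour consists of arcs at distance $\geq\delta_n/2$ from $w_n$, and the descent structure gives a factor $\exp(-\frac{n}{4t_n}L_n^2)$ (Lemma \ref{lemma: I3}), killing $I_n$ outright. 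In short: your plan puts the ``mass'' in a delicate cancellation between two divergent terms; the paper's choice $s=0$ makes the sine term vanish tautologically and localizes the double integral away from the saddle, so that no cancellation is required.

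Two smaller remarks. First, your estimate $|x^*_{t_n} - x^*| = \O(t_n)$ via \eqref{H_map} implicitly uses the gap to bound the Hilbert-transform-type integral, which is fine, but the paper's Lemma \ref{lemma:xtau} gives the weaker unconditional bound $\O(\sqrt{t_n})$, which already suffices since $\sqrt{t_n} = o(\delta_n)$. Second, the factor $\sqrt{n\delta_n^2/t_n}$ you quote for how much each individual term overshoots uses $\delta_n$ where the comparison should involve $\epsilon_n$; the overshoot is $\epsilon_n\sqrt{n/t_n}$, which need not diverge for all admissible $\epsilon_n$, though it typically does — in any case this does not change the fact that each term separately can be far too large and your argument cannot simply bound them individually.
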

\begin{figure}[h]
\centering
      \begin{minipage}[t]{0.45\linewidth}
			\centering
			 \includegraphics[trim=5cm  5cm 5cm 9cm,clip,width=\linewidth]{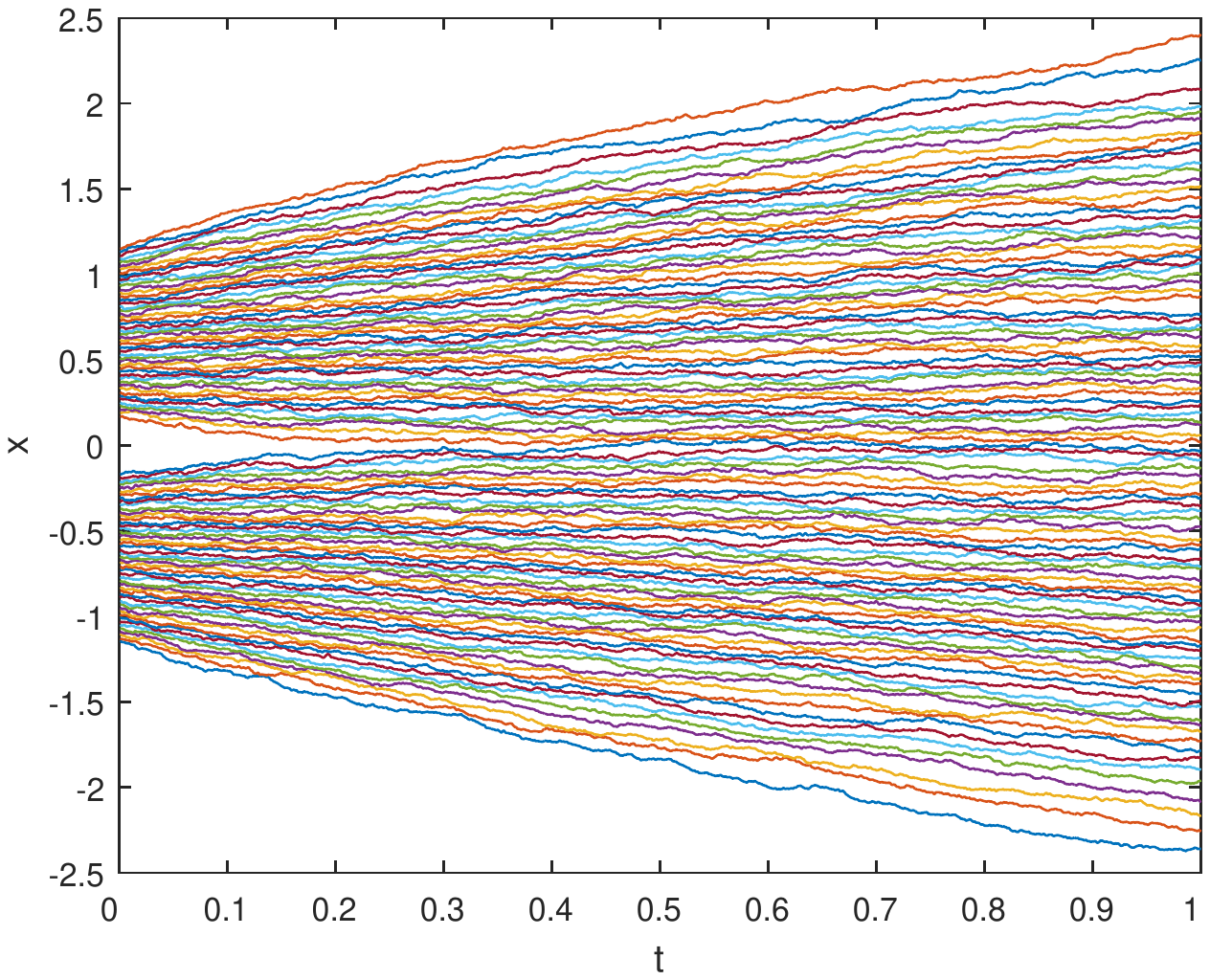}
			\end{minipage}%
			\hfill
			\begin{minipage}[t]{0.45\linewidth}
			\centering
\includegraphics[trim=5cm  5cm 5cm 9cm,clip,width=\linewidth]{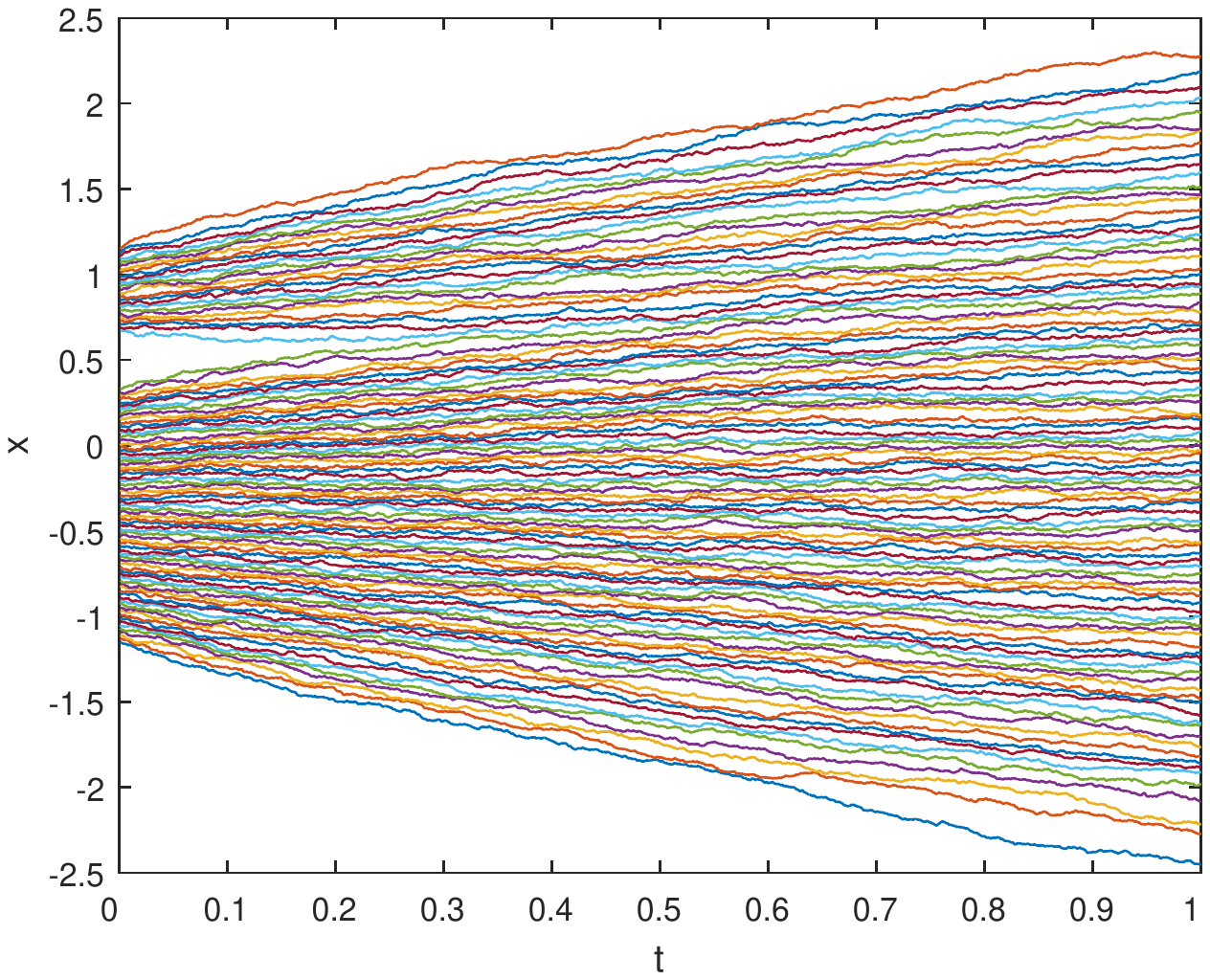} 
			\end{minipage} \vspace{-5em}
\caption{Two samples of Dyson's Brownian motion for $t\in[0,1]$, with $n=100$. The initial configurations are obtained by inserting a gap of size $0.34$ near $0$ (left) and near $1/2$ (right) into equi-spaced points on $[-1,1]$. The gap propagates for $t$ small.}
    \label{figure3}
\end{figure} 
As a consequence of the last theorem we present a corollary on gap probabilities.
\begin{cor}\label{cor}Let $x^*,\delta_n, \epsilon_n, t_n$ be as in Theorem \ref{thrm_counterexample}. Then
\begin{align*}
\lim_{n\to\infty}\textup{Prob}\lb Y_n(t_n) \text{ has no eigenvalues in} \left[x_{t_n}^*- \epsilon_n, x_{t_n}^*+ \epsilon_n\right]\rb=1.
\end{align*}
\end{cor}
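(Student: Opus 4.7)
The plan is to deduce the corollary from Theorem \ref{thrm_counterexample} by a short first-moment argument exploiting the determinantal structure. Let $N_n$ denote the number of eigenvalues of $Y_n(t_n)$ that lie in the interval $I_n:=[x_{t_n}^*-\epsilon_n,\,x_{t_n}^*+\epsilon_n]$. Then
\[
\textup{Prob}\bigl(Y_n(t_n)\text{ has no eigenvalues in }I_n\bigr)=1-\textup{Prob}(N_n\geq 1)\geq 1-\mathbb E[N_n]
\]
by Markov's inequality, so it suffices to show $\mathbb E[N_n]\to 0$.

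By the determinantal relation \eqref{determinantal_K_{n,t}} applied with $k=1$, the one-point correlation function is $\rho_{n,t_n}^1(x)=K_{n,t_n}(x,x)$, and hence
\[
\mathbb E[N_n]=\int_{I_n}K_{n,t_n}(x,x)\,dx=\int_{-1}^{1}\epsilon_n\,K_{n,t_n}\bigl(x_{t_n}^*+\epsilon_n v,\,x_{t_n}^*+\epsilon_n v\bigr)\,dv,
\]
after the change of variables $x=x_{t_n}^*+\epsilon_n v$. Note the integrand is nonnegative, since it equals $\epsilon_n$ times a one-point density.

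Finally, Theorem \ref{thrm_counterexample} applied with $u=v$ says that the integrand tends to $0$ locally uniformly in $v\in\mathbb R$; in particular, the convergence is uniform on the compact set $[-1,1]$. Therefore the integral over $[-1,1]$ tends to $0$, which gives $\mathbb E[N_n]\to 0$ and completes the proof. There is no real obstacle here: the argument is essentially a one-liner once Theorem \ref{thrm_counterexample} is in hand, and the only point worth noting is that we use the \emph{uniform} part of the locally uniform convergence on $[-1,1]$ to pass the limit through the integral without invoking a separate dominated-convergence bound.
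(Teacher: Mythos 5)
Your proof is correct, and it takes a genuinely different route from the paper's. The paper invokes the general theory of determinantal point processes: the gap probability equals the Fredholm determinant $\det\bigl(1-\left.K_{n,t_n}\right|_{A}\bigr)$ on $A=[x_{t_n}^*-\epsilon_n,x_{t_n}^*+\epsilon_n]$, and then argues that the rescaled operator converges to $0$ in trace norm (using Theorem \ref{thrm_counterexample}), so the determinant tends to $\det(1)=1$. You instead bound the gap probability from below via Markov's inequality applied to the eigenvalue count $N_n$, compute $\mathbb{E}[N_n]=\int_{I_n}K_{n,t_n}(x,x)\,dx$ from the $k=1$ determinantal relation, rescale to $[-1,1]$, and use the uniform part of the locally uniform convergence in Theorem \ref{thrm_counterexample} on the diagonal $u=v$ to push the limit through the integral. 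Your argument is more elementary: it avoids trace-class considerations and Fredholm determinants entirely, needing only the one-point function and a union/Markov bound, and a one-sided bound suffices because probabilities are a priori $\leq 1$. What it does not give, and the paper's route would if one pushed further, is the exact identification of the gap probability as a Fredholm determinant; but for the stated corollary that extra structure is unnecessary. Both proofs ultimately rest on the same diagonal estimate $\epsilon_n K_{n,t_n}(x_{t_n}^*+\epsilon_n v,\,x_{t_n}^*+\epsilon_n v)\to 0$ uniformly on $[-1,1]$, since for a nonnegative-definite kernel the trace norm on $L^2(A)$ equals $\int_A K(x,x)\,dx=\mathbb{E}[N_n]$, so the two routes are quantitatively equivalent even though they are packaged differently.
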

\begin{proof}
It follows from the general theory of determinantal point processes that 
\[\textup{Prob}\lb Y_n(t_n) \text{ has no eigenvalues in } A\rb=\det(1-\left.K_{n,t_n}\right|_{A}),\]
where $\det(1-\left.K_{n,t_n}\right|_{A})$ is the Fredholm determinant associated to the integral operator with kernel $K_{n,t_n}$ acting on $L^2(A)$. For $A=\left[x_{t_n}^*- \epsilon_n, x_{t_n}^*+\epsilon_n\right]$, it follows from Theorem \ref{thrm_counterexample} that the operator $\left.K_{n,t_n}\right|_{A}$ converges in trace norm to $0$ (cf.~\cite{Simon}), hence the Fredholm determinant converges to $1$ as \(n\to\infty\).
\end{proof}
Theorem \ref{thrm_counterexample} and Corollary \ref{cor} apply in particular to cases in which the limiting density $\psi$ vanishes at $x^*$ with exponent $\kappa>1$. Then the typical distance between consecutive eigenvalues $a_k^{(n)}$ close to $x^*$ is $\mathcal O(n^{-\frac{1}{\kappa+1}})$, and the above results imply that each  gap of this size propagates for times $t_n=o(n^{-\frac{2}{\kappa+1}})$. In such cases, as explained in Remark \ref{rmkkappabig}, it can be expected that sine kernel universality is only reached for times beyond the critical time $t_{cr}$. Near the critical time, non-trivial limiting kernels are expected. We intend to come back to this in a future publication.

The proofs of our results are based on an asymptotic analysis of the double contour integral representation \eqref{eq:Kncontour3} for the kernel $K_{n,t}$.
Since the function $\frac{n}{2t}\left((z-x)^2+2tg_{\mu_n}(z)\right)$ appearing in the exponential of the double integral depends on $\mu_n$ and hence on $n$ in a rather complicated manner, we will need to carefully study the $n$-dependence of the saddle points and several associated quantities in order to be able to rigorously perform a saddle point analysis. On the other hand, to prove convergence to the sine kernel, we will not need to know the precise leading order behavior of the double integral, and an upper bound will be sufficient. This is a consequence of the convenient form of \eqref{eq:Kncontour3}, in which the sine kernel is explicitly present in the second term.

The paper is organized as follows. Section \ref{section_t_fixed} contains the proof of Theorem 
\ref{thrm_t_fixed}. Theorems \ref{thm_nv}, \ref{thrm_v} and \ref{thrm_counterexample} are proven in Sections \ref{section_thrm_nv}, \ref{section_thrm_v} and \ref{section_thrm_counterexample}, 
respectively. We conclude with Appendix \ref{appendix} giving a new self-contained proof of the determinantal relations \eqref{determinantal_K_{n,t}} for the kernel $K_{n,t}$ given by the double contour integral formula \eqref{eq:Kncontour3}.

\section{Proof of Theorem \ref{thrm_t_fixed}}\label{section_t_fixed}
We start by recalling some analytic facts about the free convolution $\mu\boxplus\sigma_t$.
Let us define the Stieltjes transform of $\mu$ by
\begin{align*}
 G_\mu(z):=\int\frac{d\mu(x)}{z-x}
\end{align*}
for $z$ in the upper half plane $\mathbb C^+:=\{z\in\C:\Im z>0\}$. Recall the definition of $y_{t,\mu}$ in \eqref{def:y_t} for $t>0$. Note that
\[\int \frac{d\mu(s)}{(x-s)^2 + y^2} = -\frac{1}{y}\Im G_\mu(x+iy). \]
The function $x \mapsto y_{t,\mu}(x)$ is continuous and we write
\begin{equation} \label{def:Omega} 
  \Omega_{t,\mu} = \{ x + iy \in \mathbb C^+ \mid y > y_{t,\mu}(x) \} 
\end{equation}
for the domain above the graph of $y_{t,\mu}$.
Biane \cite{Biane} showed that the function
\begin{equation}
H_{t,\mu}(z)=z+t G_\mu(z)\label{def:H}
\end{equation}
maps the region $\Omega_{t,\mu}$ conformally to the upper half plane.
The graph of $y_{t,\mu}$ (the boundary of $\Omega_{t,\mu}$) is mapped bijectively to the real line. For $x$ real, $G_\mu(x)$ is 
understood as the limit of $G_\mu(z)$ as $z$ approaches $x$ from the upper half plane. 
We write $F_{t,\mu}$ for the inverse function of $H_{t,\mu}$, mapping the real line back to the graph of $y_{t,\mu}$.

The Stieltjes transform and the density of the free convolution $\mu_t$ may be recovered from the formulas
\begin{equation} \label{eq:Gtau} 
  G_{\mu_{t}}(z) = G_{\mu}(F_{t,\mu}(z)) \quad \text{and} \quad  \psi_{t}(x) = - \frac{1}{\pi} \Im G_{\mu}(F_{t,\mu}(x)), \quad 
\text{for $x \in \mathbb R$}. 
\end{equation}
Given a reference point $x^*\in\mathbb R$, we define the time evolution
\begin{equation} \label{def:xtau}
x_t^{*}=H_{t,\mu}(x^{*}+iy_{t,\mu}(x^{*})),
\end{equation}
for points \(x^{*}\) on the real line. We have 
\begin{equation}\label{eq:psitau2}
\psi_{t}(x_{t}^{*})=\frac{y_{t,\mu}(x^{*})}{\pi t}.
\end{equation}
It is easy to see that the definition in \eqref{def:xtau} coincides with the one given in \eqref{H_map}. Moreover, using 
\eqref{eq:psitau} it is not difficult to see that with \eqref{t_cr}  we have $\psi_t(x^*_t)=0$ for any $t\leq t_{cr}$ and  
$\psi_{t}(x^*_t)>0$ for $t>t_{cr}$.

\begin{proof}[Proof of Theorem \ref{thrm_t_fixed}]
With the double contour integral formula \eqref{eq:Kncontour3} we can write the kernel as 
\[\frac{1}{c_t n}K_{n,t}\left({x_t^*}+\frac{u}{c_t n}, {x_t^*}+\frac{v}{c_t n}\right)=I_n(u,v)+A_n(u,v),\]
with \(c_t := \psi_t(x_t^{\ast})\) and
\begin{align}\label{I_n}&I_n(u,v):=\frac{e^{-\frac{1}{2nt c_t^2}(u^2-v^2)+\frac{1}{c_tt}(u-v)(x_0-x_t^*)}}{c_t(2\pi i)^2t} \int\limits_{x_0-i\infty}^{x_0+i\infty}dz \int\limits_{\gamma} dw 
~\frac{e^{-f_n(z,w)}}{z-w},\\
&A_n(u,v)=\frac{1}{\pi (u-v)} \sin\left(\frac{s}{c_t t} (u-v)\right).\label{A_n}
\end{align}
Here we define
\[f_n(z,w)=-\phi_n(z,u)+\phi_n(w,v),\]
\[\phi_n(z,u)=\frac{n}{2t}\left[\left(z-{x_t^*}-\frac{u}{c_t n}\right)^2+2t g_{\mu_n}(z)\right]\]
with $g_{\mu_n}$ as in \eqref{def:g}.
As in formula \eqref{eq:Kncontour3}, $s>0$ is the imaginary part of the point \(\t_1\) in the upper half plane which is the intersection of 
\(\gamma\) with the horizontal line \(x_0+i s\), \(s\in\mathbb{R}\). In order to prove Theorem \ref{thrm_t_fixed}, we will show for a suitable choice of the contour $\gamma$ and the point $x_0$ that $A_n(u,v)$ converges to the sine kernel as $n\to\infty$, whereas $I_n(u,v)$ converges to $0$. We will in particular choose $\gamma$ and $x_0$ in such a way that
\begin{equation}\label{eq:estimate s}s=\pi c_t t +o(1),~ n\rightarrow \infty,\end{equation}
with the $o(1)$ error term uniformly small for $u,v$ in any compact set. Then it is 
readily seen that we have
\begin{equation}\label{eq:limAn}\lim_{n\rightarrow \infty} A_{n}(u,v)=\frac{\sin\left(\pi (u-v)\right)}{\pi (u-v)},\end{equation}
uniformly for $u,v$ in any compact set, if we understand the right hand side as $1$ if $u=v$.
The next crucial step is to see that
\begin{equation}\label{eq:limIn}\lim_{n\rightarrow \infty}I_{n}(u,v)=0\end{equation}
uniformly in $u,v$, and this will be done by a two-dimensional saddle point analysis, in which delicate estimates are needed, especially because the phase function $f_n(z,w)$ can depend on $n$ in a complicated manner.

Computing the (two-dimensional) complex saddle points of the function \(f_n\) gives the equations
\[z-{x_t^*}-\frac{u}{c_t n}+t G_{\mu_n}(z)=0,\]
\[w-{x_t^*}-\frac{v}{c_t n}+t G_{\mu_n}(w)=0.\]
Expressing this in terms of the functions \(H_{t, \mu_n}(z)=z+t G_{\mu_n}(z)\) gives
\[H_{t, \mu_n}(z)={x_t^*}+\frac{u}{c_t n},\]
\[H_{t, \mu_n}(w)={x_t^*}+\frac{v}{c_t n}.\]
Now, as can be seen with help of the definition of \(y_{t, \mu}\), the function \(H_{t, \mu}(z)\) is conformal in a neighborhood of 
\(x^{*}+iy_{t, \mu}(x^{*}) = F_{t, \mu}({x_t^*})\), so the inverse \(F_{t, \mu}\) is conformal in a neighborhood of \({x_t^*}\). 
Moreover, it follows from the weak convergence of \(\mu_n\) to \(\mu\) (and eventually using Vitali's convergence theorem) that 
\(H_{t, \mu_n}\) converges uniformly to \(H_{t, \mu}\) in an $n$-dependent neighborhood \(U\) of the point \(x^{*}+iy_{t, \mu}(x^{*})\) as we stay 
at a positive distance from the supports of the measures. So there exists an index \(N\) such that for all \(n>N\) the functions 
\(H_{t, \mu_n}\) are conformal mappings from \(U\) onto a neighborhood of \({x_t^*}\). Hence, for large \(n\), we can consider the 
inverse functions \(F_{t, \mu_n}\) of \(H_{t, \mu_n}\) in a neighborhood of \({x_t^*}\) that is independent of $n$, so that we obtain a specific pair of 
solutions for the saddle point equations by
\[z_n = x_n+iy_n=F_{t, \mu_n}\left({x_t^*}+\frac{u}{c_t n}\right),\]
\[w_n= F_{t, \mu_n}\left({x_t^*}+\frac{v}{c_t n}\right)\]
and the corresponding complex conjugate solutions. In total we find four two-dimensional saddle points \((z_n,w_n), 
(\overline{z_n},w_n), (z_n,\overline{w_n})\) and \((\overline{z_n},\overline{w_n})\). In \eqref{I_n} we now choose specific 
contours of integration. The integral in the \(z\)-plane is taken along the vertical line through \(x_0=x_n= \Re F_{t, 
\mu_n}\left({x_t^*}+\frac{u}{c_t n}\right)\). The parts of this line in the lower and in the upper part of the complex plane give 
paths of descent for the phase function \(\phi_n(z,u)\). For instance, for \(\tau>0\) we have
\[\frac{d}{d\t}\Re \phi_n(x_n+i\t,u)=\frac{n\tau}{t}\left[t \int\frac{d\mu_n(a)}{(x_n-a)^2+\t^2}-1\right],\]
where 
\(\int\frac{d\mu_n(a)}{(x_n-a)^2+\t^2}\)
is strictly decreasing in \(\t\), and the right hand side of the above equation vanishes if and only if \(\t=y_n=\Im F_{t, \mu_n}\left({x_t^*}+\frac{u}{c_t n}\right)\). 
Moreover, we have 
\begin{align}\label{eq:derivative}
\beta_n:=-\frac{d^2}{d\t^2}\Big\vert_{\t=y_n}\Re \phi_n(x_n+i\t,u)=2ny_n^2 \int\frac{d\mu_n(a)}{\left((x_n-a)^2+y_n^2\right)^2} >0,
\end{align}
so that \(z_n\) is a simple saddle point for \(\phi_n(z,u)\). It is clear by the same argumentation that \(w_n\) is a simple saddle 
point for \(\phi_n(w,v)\). For the integral in the \(w\)-plane we construct a path \(\gamma=\gamma_n\) using the graph of the 
function \(y_{t,\mu_n}\). Since the supports of the measures \(\mu_n\) are all contained in a compact set independent of $n$, we can find a finite interval independent of $n$ (but depending on $t$), say \(J\), containing all the supports of the functions 
\(y_{t,\mu_n}\). Indeed, this follows easily from the fact that $y_{t,\mu_n}(x)=0$ for ${\rm dist}(x,\text{supp}(\mu_n))\geq \sqrt{t}$, which is a direct consequence of the definition \eqref{def:y_t}. The path \(\gamma_n\) starts at a real point to the right of \(J\) and follows the 
graph of \(y_{t,\mu_n}\), passes the saddle points \(z_n\) and \(w_n\) on the way, and finally returns for a last time back to the real axis at a point located to the left of \(J\). We complete the path \(\gamma_n\) just by going back using the complex conjugate path in the lower 
half plane. This establishes a path of descent for the phase function \(-\phi_n(w,v)\) of the integral in the \(w\)-plane passing 
through the saddle points \(w_n, \overline{w_n}\). We can verify this, for instance, by parametrizing the upper part of $\gamma_n$ by \(\tilde{\gamma}_n(\t)=\t+iy_{t,\mu_n}(\t)\) (using the opposite orientation for convenience) and computing
\[\frac{d}{d\t} \Re \phi_n(\tilde{\gamma}_n(\t),v)=\frac{n}{t}\left(H_{t,\mu_n}(\tilde{\gamma}_n(\t))-{x_t^*}-\frac{v}{c_t n}\right).\]
With these choices we get contours passing though each of the four saddle points along paths of descent. Now we will show that 
\begin{align*}
&\tilde{I}_n(u,v) =c_t (2\pi i)^2 t e^{\frac{1}{2nt c_t^2}(u^2-v^2)-\frac{1}{c_tt}(u-v)(x_0-x_t^*)}I_n(u,v)\\
&=\int\limits_{x_n-i\infty}^{x_n+i\infty}dz \int\limits_{\gamma_n} dw ~ 
\frac{e^{-f_n(z,w)}}{z-w}=\int\limits_{x_n-i\infty}^{x_n+i\infty}dz \int\limits_{\gamma_n} dw ~e^{\phi_n(z,u)-\phi_n(w,v)} 
\frac{1}{z-w}
\end{align*}
converges to zero as \(n\rightarrow \infty\). We remark in passing that we excluded the ``gauge factor'' $\exp(-\frac{1}{2nt c_t^2}(u^2-v^2)+\frac{1}{c_tt}(u-v)(x_0-x^*_t))$ from $I_n$ for notational convenience only. To show the vanishing of $\tilde{I}_n(u,v)$, we split the integration contours in several parts. As can be expected, the main contribution to the double integral will come from small neighbourhoods of the saddle points. For technical reasons, we define \begin{equation}\label{def:Ln}L_n =\frac{\log n}{\sqrt{n}},\end{equation}and split the integral into seven parts 
\(\tilde{I}_n(u,v)=\sum_{k=1}^7 \tilde{I}_n^{(k)}\) in the following way:
\[\tilde{I}_n^{(1)}=\int\limits_{z_n+iL_n}^{z_n+i\infty}dz \int\limits_{\gamma_n} dw ~e^{\phi_n(z,u)-\phi_n(w,v)} \frac{1}{z-w},\]
\[\tilde{I}_n^{(2)}=\int\limits_{\overline{z_n}-i\infty}^{\overline{z_n}-iL_n}dz \int\limits_{\gamma_n} dw 
~e^{\phi_n(z,u)-\phi_n(w,v)} \frac{1}{z-w},\]
\[\tilde{I}_n^{(3)}=\int\limits_{\overline{z_n}+iL_n}^{z_n-iL_n}dz \int\limits_{\gamma_n} dw ~e^{\phi_n(z,u)-\phi_n(w,v)} 
\frac{1}{z-w},\]
\[\tilde{I}_n^{(4)}=\int\limits_{z_n-iL_n}^{z_n+iL_n}dz \int\limits_{\gamma_n\cap U_{L_n}(w_n)} dw ~e^{\phi_n(z,u)-\phi_n(w,v)} 
\frac{1}{z-w},\]
where \(U_{L_n}(w_n)\) denotes a disk of radius \(L_n\) centered at \(w_n\),
\[\tilde{I}_n^{(5)}=\int\limits_{z_n-iL_n}^{z_n+iL_n}dz \int\limits_{\gamma_n\backslash U_{L_n}(w_n)} dw 
~e^{\phi_n(z,u)-\phi_n(w,v)} \frac{1}{z-w},\]
\[\tilde{I}_n^{(6)}=\int\limits_{\overline{z_n}-iL_n}^{\overline{z_n}+iL_n}dz \int\limits_{\gamma_n\cap U_{L_n}(\overline{w_n})} dw 
~e^{\phi_n(z,u)-\phi_n(w,v)} \frac{1}{z-w},\]
and
\[\tilde{I}_n^{(7)}=\int\limits_{\overline{z_n}-iL_n}^{\overline{z_n}+iL_n}dz \int\limits_{\gamma_n\backslash 
U_{L_n}(\overline{w_n})} dw ~e^{\phi_n(z,u)-\phi_n(w,v)} \frac{1}{z-w}.\]
To estimate the above integrals, we will need a rough bound for the length of the curve $\gamma_n$, which is obtained in the following lemma. We will prove a more general version as it will be needed later on, however here we only apply it for the case of a fixed $t$.
\begin{lemma}\label{lemma:length}
 Let $L(\g_n)$ denote the length of $\g_n$. Then for any positive bounded sequence $t_n$ we have
\begin{align*}
 L(\g_n)=\O\lb n^3\sqrt{t_n}\rb,\qquad n\to\infty.
\end{align*}
\end{lemma}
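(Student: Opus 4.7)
My plan is to parametrize the upper half of $\gamma_n$ as the graph of $y_{t_n,\mu_n}$ over a bounded interval $J$, close it by its reflection across the real axis, and bound the arc length by the horizontal extent of $J$ plus the total variation of $y_{t_n,\mu_n}$. Write $y(x):=y_{t_n,\mu_n}(x)$. Assumption 2 together with the observation recorded in the main text that $y_{t,\mu_n}(x)=0$ whenever $\operatorname{dist}(x,\operatorname{supp}\mu_n)\geq\sqrt{t}$ allows $J$ to be chosen fixed and independent of $n$, with $|J|=\O(1)$ uniformly (using boundedness of $t_n$). Since $\sqrt{1+a^2}\leq 1+|a|$, one then has
$$L(\gamma_n)\leq 4|J|+2\int_J |y'(x)|\,dx,$$
the additive constant absorbing the short horizontal segments on the real axis that close the contour. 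The task thus reduces to bounding $\int_J|y'|\,dx$.

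Plugging $y=\sqrt{t_n}$ into \eqref{def:y_t} yields $y(x)\leq\sqrt{t_n}$ uniformly in $x$, since $\int d\mu_n(s)/((x-s)^2+t_n)\leq 1/t_n$. I would then apply the coarea (Banach indicatrix) formula
$$\int_J|y'(x)|\,dx=\int_0^{\sqrt{t_n}} N(y_0)\,dy_0,\qquad N(y_0):=\#\{x\in J:y(x)=y_0\},$$
valid for $y$ continuous and of bounded variation on $J$.

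The key ingredient is a polynomial degree count for $N(y_0)$: for $y_0\in(0,\sqrt{t_n}]$, any preimage $x$ satisfies
$$\frac{1}{n}\sum_{j=1}^n\frac{1}{(x-a_j^{(n)})^2+y_0^2}=\frac{1}{t_n},$$
which after clearing denominators becomes a polynomial equation in $x$ of degree at most $2n$. Hence $N(y_0)\leq 2n$, so $\int_J|y'|\,dx\leq 2n\sqrt{t_n}$ and $L(\gamma_n)=\O(1+n\sqrt{t_n})$. This is considerably sharper than the crude $\O(n^3\sqrt{t_n})$ stated in the lemma, which is all that is needed in the subsequent saddle-point estimates.

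The main technical point I would have to address is justifying the coarea formula, i.e.\ the bounded variation of $y$ on $J$. The polynomial count already shows that the indicatrix $N(y_0)$ is uniformly bounded by $2n$, which by standard results suffices to conclude that the continuous function $y$ is BV with the stated total variation bound. Alternatively, one can appeal to the implicit function theorem applied to the defining relation on each connected component of $\{x:y(x)>0\}$ to deduce real-analyticity inside each component and analyse the behaviour near the boundary of each component directly; the finiteness of the number of such components (itself polynomial in $n$) then closes the argument.
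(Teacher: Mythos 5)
Your proof is correct in substance and takes a genuinely different (and in fact sharper) route than the paper's. The paper also starts from the defining algebraic relation, but it then counts intervals of monotonicity by bounding the number of critical points of the algebraic function $w=y_{t_n,\mu_n}^2$ via the degree of a discriminant of the bivariate polynomial $Q_n(x,w)$; this yields $\O(n^3)$ monotone pieces and hence the stated $L(\gamma_n)=\O(n^3\sqrt{t_n})$. You instead bound the total variation of $y_{t_n,\mu_n}$ directly through the Banach indicatrix, using that for each $y_0>0$ the level set $\{x:y_{t_n,\mu_n}(x)=y_0\}$ is exactly the real solution set of a polynomial of degree $2n$ (note the equivalence of ``$=y_0$'' with the defining equation does require $y_0>0$, which you have, since the infimum in \eqref{def:y_t} is attained as an equality whenever it is positive). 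This is more elementary, avoids the detour through discriminants, and gives $L(\gamma_n)=\O(1+n\sqrt{t_n})$, a factor $n^2$ better; the bound is more than sufficient for the saddle-point estimates in which the lemma is used.

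One point of precision is worth tightening. The Banach indicatrix identity is $V(y;J)=\int N(y_0)\,dy_0$ for continuous $y$ (both sides possibly infinite); it equals $\int_J|y'|\,dx$ only when $y$ is absolutely continuous, which is not obvious in advance. But for the arc length you do not need $\int\sqrt{1+(y')^2}$ at all: for any continuous $y$ on a bounded interval $J'$, the length of its graph (defined via suprema over partitions) satisfies $\ell\le L(J')+V(y;J')$ by the inequality $\sqrt{p^2+q^2}\le|p|+|q|$. Then $V(y;J')=\int N(y_0)\,dy_0\le 2n\sqrt{t_n}$ by your level-set count, and $L(\gamma_n)\le 2L(J')+2V(y;J')=\O(1+n\sqrt{t_n})$ follows without any absolute-continuity discussion. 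You already note the corrective observation at the end (the uniform bound on $N$ implies finite total variation via Banach's theorem); phrasing the whole estimate through total variation from the start removes the one line that would otherwise need a caveat.
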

\begin{proof}
We assume that the intervals of positivity of the function \(y_{t_n,\mu_n}\) are contained inside the bounded interval \(J\). In order to estimate the length of \(\gamma_n\), which is constructed from the the graph of \(y_{t_n,\mu_n}\) as described above, it is necessary to estimate the length on these intervals of positivity which form the support of \(y_{t_n,\mu_n}\). We aim to find an upper bound for the number of intervals of monotonicity into which the support can be partitioned. In order to do so we will find an estimate for the number of vanishing points of the derivative of  \(y_{t_n,\mu_n}\) inside its support. If \(I\) is an interval on which \(y_{t_n,\mu_n}\) is monotonically increasing or decreasing, we have that the length of its graph on \(I\) is bounded by \(L(I)\sqrt{t_n}\), where $L(I)$ denotes the length of $I$ and we use the fact that \(y_{t_n,\mu_n}(x)\leq \sqrt{t_n}\) for all \(x\in\mathbb{R}\). On the intervals of positivity we have the equality
\[\sum_{k=1}^n \frac{1}{(x-a_{n}^{(k)})^2+y_{t_n,\mu_n}^2(x)}=\frac{n}{t_n},\]
which we can write as 
\begin{equation*}
\prod_{j=1}^n \left(y_{t_n,\mu_n}^2(x) +(x-a_n^{(j)})^2\right)=\frac{t_n}{n}\sum_{k=1}^n \prod_{j\neq k} \left(y_{t_n,\mu_n}^2(x) +(x-a_n^{(j)})^2\right).
\end{equation*}
Hence, the function \(w=w_n=y_{t_n,\mu_n}^2\) satisfies the algebraic equation
\begin{equation}\label{algeq_w}
\prod_{j=1}^n \left(w(x) +(x-a_n^{(j)})^2\right)=\frac{t_n}{n}\sum_{k=1}^n \prod_{j\neq k} \left(w(x) +(x-a_n^{(j)})^2\right).
\end{equation}
As the derivatives of $y_{t_n,\mu_n}$ and \(w\) vanish at the same points inside the support, it is sufficient to find an upper bound for the points of vanishing of the derivative of the algebraic function \(w\) defined by \eqref{algeq_w} on its entire associated Riemann surface. To this end, we rewrite equation \eqref{algeq_w} in the form
\begin{equation}\label{def_Q_n}Q_n(x,w)=x^{2n}+\sum_{k=0}^{2n-1}p_{k,n}(w) x^k =0,
\end{equation}
where \(p_{k,n}(w)\) are polynomials in \(w\) of degree at most \(n\). Let us assume that the derivative of \(w\) vanishes at some point \(x_0\) of its Riemann surface. Then by differentiation we see that the pair \((x_0,w(x_0))\) satisfies the equations
\[Q_n(x_0,w(x_0))=0,\quad \frac{\partial Q_n}{\partial x} (x_0,w(x_0))=0.\]
Hence, at the point \(w=w(x_0)\) the (univariate) polynomial \(Q_n(x,w(x_0))\) has a multiple root at \(x=x_0\). This means that \(w(x_0)\) is a root of the corresponding discriminant which, in view of \eqref{def_Q_n}, is of degree at most \(2n(2n-1)\). It follows that there is a set \(\{w_1,\ldots,w_{2n(2n-1)}\}\) of at most \(2n(2n-1)\) different values which \(w(x_0)\) can take. But for every possible value \(w(x_0)=w_k\) there are at most \(2n\) different solutions of \(Q_n(x,w(x_0))=0\) considered as a polynomial equation in \(x\). From this we obtain a set  \(\{x_1,\ldots,x_{4n^2(2n-1)}\}\) of at most \(4n^2(2n-1)\) different values that \(x_0\) can take, which means that there are at most \(4n^2(2n-1)\) points \(x_0\) such that \(w'(x_0)=0\). Hence, on the intervals of positivity of the function \(y_{t_n,\mu_n}\) its derivative can vanish at at most \(4n^2(2n-1)\) different points. But from this we infer that throughout the support of \(y_{t_n,\mu_n}\), there are at most \(4n^2(2n-1)+1\) many intervals of monotonicity. This gives for the length of \(\gamma_n\) 
\[L(\gamma_n)=\mathcal{O}\left(n^3 \sqrt{t_n}\right),\]
as \(n\to \infty.\)
\end{proof}

In what follows, all estimates hold for sufficiently large values of 
\(n\), and we will use constants \(\eta,\tilde{\eta},\hat\eta>0\) which are independent of \(n\) and also independent of $u,v$ (for $u,v$ in any compact set), but which can change their values at 
different occasions without being mentioned explicitly. 
Also, constants implied by asymptotic $\mathcal O(\cdot)$ or $o(\cdot)$ notations as $n\to\infty$ can be chosen independent of $u,v$ for $u,v$ in any compact set. Moreover, although terms of the form \(\sqrt{t}\) could be absorbed by constants we will write them explicitly in favor of later references.

\paragraph{\bf Estimation of \(\tilde{I}_n^{(1)}\). }
First, we observe that for $(z,w)$ on \([z_n+iL_n, z_n+i\infty]\times 
\gamma_n\) we have
\begin{equation}\label{def:dn}d_n^{-1}:=\frac{1}{\min|z-w|}\leq \frac{\eta}{L_n}.\end{equation}
Moreover, on \(\gamma_n\) the function \(-\phi_n(w,v)\) takes its maximum in \(w=w_n\), so that by Lemma \ref{lemma:length} we obtain
\[|\tilde{I}_n^{(1)}|\leq \frac{\tilde\eta n^3 \sqrt{t}}{d_n} 
e^{\Re\{\phi_n(z_n,u)-\phi_n(w_n,v)\}}\int\limits_{z_n+iL_n}^{z_n+i\infty}|dz|e^{\Re\{\phi_n(z,u)-\phi_n(z_n,u)\}}. \]
A computation using the definition of the saddle points shows
\begin{equation}\label{eq:znwnestimate}z_n-w_n = \mathcal{O}\left(\frac{1}{n}\right),\end{equation}
and also
\begin{equation}\label{eq:phinznwnestimate}-f_n(z_n,w_n)=\phi_n(z_n,u)-\phi_n(w_n,v) =\mathcal{O}(1),\end{equation}
as \(n\rightarrow \infty\). This gives
\begin{align*}|\tilde{I}_n^{(1)}|&\leq\frac{\tilde\eta n^3\sqrt{t}}{d_n}\int\limits_{z_n+iL_n}^{z_n+i\infty}|dz|e^{\frac{1}{2}\Re\{\phi_n(z,
u)-\phi_n(z_n,u)\}}e^{\frac{1}{2}\Re\{\phi_n(z,u)-\phi_n(z_n,u)\}}\\
&\leq\frac{\tilde\eta n^3\sqrt{t}}{d_n}e^{\frac{1}{2}\Re\{\phi_n(z_n+iL_n,u)-\phi_n(z_n,u)\}}\int\limits_{z_n+iL_n}^{z_n+i\infty}|dz|e^{\frac{1}{2}
\Re\{\phi_n(z,u)-\phi_n(z_n,u)\}}.
\end{align*}
A complex Taylor expansion for \(\phi_n(z,u)\) around \(z_n\) yields
\[\phi_n(z,u)=\phi_n(z_n,u)+\frac{1}{2}\phi_n^{''}(z_n,u)(z-z_n)^2+R_2(z)\]
with
\begin{equation}\label{eq:estimateR2}|R_2(z)|\leq \frac{\max_{|s-z_n|=r}|\phi_n(s,u)|}{r^3}\frac{|z-z_n|^3}{1-\frac{|z-z_n|}{r}}\leq n\hat{\eta}|z-z_n|^3\leq 
n\hat{\eta}L_n^3,\end{equation}
for suitably small \(r>0\) and \(|z-z_n|\leq L_n\). By \eqref{eq:derivative} and by the fact that $nL_n^3\to 0$ as $n\to\infty$, we get
\[\frac{\tilde\eta n^3\sqrt{t} }{d_n}e^{\frac{1}{2}\Re\{\phi_n(z_n+iL_n,u)-\phi_n(z_n,u)\}}=\mathcal{O}\left(\frac{n^3\sqrt{t}}{d_n} e^{-\frac{\beta_n}{2} L_n^2}\right),\quad n\rightarrow \infty.\]
Finally, we have
\[\int\limits_{z_n+iL_n}^{z_n+i\infty}|dz|e^{\frac{1}{2}\Re\{\phi_n(z,u)-\phi_n(z_n,u)\}}\leq\int\limits_{z_n+iL_n}^{z_n+i\infty}
|dz|e^{\Re\left\{-\left(z_n-{x_t^*}-\frac{u}{c_t n}\right)^2-2t g_{\mu_n}(z_n) +\left(z-{x_t^*}-\frac{u}{c_t n}\right)^2+2t 
g_{\mu_n}(z)\right\}}, \] 
and the right-hand side converges as $n\to\infty$ to 
\[\int\limits_{F_{t,\mu}({x_t^*})}^{F_{t,\mu}({x_t^*})+i\infty}|dz|e^{\Re\left\{-\left(F_{t,\mu}({x_t^*})-{x_t^*}\right)^2-2t 
g_{\mu}(F_{t,\mu}({x_t^*})) +\left(z-{x_t^*}\right)^2+2t g_{\mu}(z)\right\}} < \infty.\]
Together this gives by \eqref{def:Ln}, \eqref{def:dn} and the fact that $\lim_{n\to\infty}\beta_n/n$ exists and is positive,
\begin{equation}\label{eq:In1final}\tilde{I}_n^{(1)}=
\mathcal{O}\left(\frac{n^{3}\sqrt{t}}{d_n} e^{-\frac{\beta_n}{2} L_n^2}\right)=\mathcal{O}\left(n^{7/2}\frac{\sqrt{t}}{\log n} e^{-\eta(\log n)^2}\right),\quad n\rightarrow \infty,\end{equation}
for some constant $\eta>0$.

The integrals \(\tilde{I}_n^{(2)}\) and \(\tilde{I}_n^{(3)}\) can be treated in a very similar fashion, where the estimation of 
\(\tilde{I}_n^{(3)}\) is slightly simpler as the contours of integration stay bounded.

\paragraph{\bf Estimation of \(\tilde{I}_n^{(4)}\). }
Recall that $H_{t,\mu_n}$ maps the part of $\gamma_n$ in the upper half plane bijectively and conformally to a part of the real line. This implies the identity
\begin{equation}\label{eq:identityHy}\arctan y_{t,\mu_n}'(\Re w)=-\arg H_{t,\mu_n}'(w),\qquad w\in\gamma_n\cap 
U_{L_n}(w_n).\end{equation}
As $n\to\infty$, $\arg H_{t,\mu_n}'(w)\to \arg H_{t,\mu}'(x^*)\in (-\pi/2, \pi/2)$. This implies that $y_{t,\mu_n}'(\Re w)$ remains bounded for large $n$, and that the length of 
the contour $\gamma_n\cap 
U_{L_n}(w_n)$ is $\mathcal O(L_n)$ as $n\to\infty$.
Hence, we can use an arc-length parametrization $\gamma_n:[-\ell_n,\tilde \ell_n]\to \gamma_n\cap 
U_{L_n}(w_n)$, with $\gamma_n(0)=z_n$ and $\ell_n,\tilde \ell_n\leq \eta L_n$, to compute the integral $\tilde I_n^{(4)}$.
This easily leads to the upper bound
\begin{equation}\label{eq:estI4}|\tilde{I}_n^{(4)}|\leq  e^{\max\Re\left(\phi_n(z,u)-\phi_n(w,v)\right)} \int\limits_{z_n-iL_n}^{z_n+iL_n}|dz| \int_{-\ell_n}^{\tilde \ell_n} |ds| 
 \frac{1}{|z-\gamma_n(s)|},\end{equation}
 where the maximum is taken over $(z,w)\in [z_n-iL_n, z_n+iL_n]\times\left(\gamma_n\cap 
U_{L_n}(w_n)\right)$.
If we write $\gamma_n(L_n\sigma)=z_n+L_n\tilde\gamma_n(\sigma)$, $s=L_n\sigma$, and $z=z_n+iL_n\zeta$, then
\begin{equation}\label{eq:estI42}|\tilde{I}_n^{(4)}|\leq  L_n  e^{\max\Re\left(\phi_n(z,u)-\phi_n(w,v)\right)} \int\limits_{-1}^{1}|d\zeta| \int_{-\eta}^{\eta} |d\sigma| 
 \frac{1}{|i\zeta-\tilde\gamma_n(\sigma)|}.\end{equation}
The remaining integral remains bounded by the dominated convergence theorem and since the angle between $\gamma_n$ and $z_n+i\mathbb R$ cannot become small.
By construction of the integration contours as descent paths, we also have
$\max\Re\left(\phi_n(z,u)-\phi_n(w,v)\right)=\Re\left(\phi_n(z_n,u)-\phi_n(w_n,v)\right)$
and it follows that
\begin{equation}\label{eq:In4estimate}\tilde{I}_n^{(4)}=
\mathcal{O}\left(L_n\right),\quad n\rightarrow \infty.\end{equation}

By symmetry, the integral \(\tilde{I}_n^{(6)}\) can be estimated by the same arguments, so it remains to treat 
\(\tilde{I}_n^{(5)}\) and \(\tilde{I}_n^{(7)}\), which again are of the same type so we only have to deal with 
\(\tilde{I}_n^{(5)}\).

\paragraph{\bf Estimation of \(\tilde{I}_n^{(5)}\). }
 On the contour \([z_n-iL_n, z_n+iL_n]\times \left(\gamma_n\backslash U_{L_n}(w_n)\right) \) we have
\begin{equation}\label{def:dntilde}\tilde d_n^{-1}:=\frac{1}{\min|z-w|}\leq \frac{\eta}{L_n},\end{equation}
hence the double integral can be estimated by decoupled integrals,
\[|\tilde{I}_n^{(5)}|\leq \frac{1}{\tilde d_n} \int\limits_{z_n-iL_n}^{z_n+iL_n}|dz| 
e^{\Re\left\{\phi_n(z,u)-\phi_n(z_n,u)\right\}} \int\limits_{\gamma_n\backslash U_{L_n}(w_n)} |dw| 
~e^{-\Re\left\{\phi_n(w,v)-\phi_n(w_n,v)\right\}}. \]
By a Taylor expansion of \(\phi_n(z,u)\) around \(z_n\) and arguments already used above we have
\[\int\limits_{z_n-iL_n}^{z_n+iL_n}|dz| e^{\Re\left\{\phi_n(z,u)-\phi_n(z_n,u)\right\}} 
=\mathcal{O}\left(\frac{1}{\sqrt{n}}\right),\qquad n\to\infty.\]
To estimate the remaining integral we define 
\(\{w_n^{+},w_n^{-}\}=\gamma_n\cap\partial U_{L_n}(w_n)\). By Lemma \ref{lemma:length}, we 
obtain as $n\to\infty$,
\[|\tilde{I}_n^{(5)}|=\mathcal{O}\left(\frac{n^{3}\sqrt{t}}{\tilde d_n \sqrt{n}}\right) 
\left\{e^{-\Re\left\{\phi_n(w_n^{+},v)-\phi_n(w_n,v)\right\}}+e^{-\Re\left\{\phi_n(w_n^{-},v)-\phi_n(w_n,v)\right\}}\right\}.\]
We have by one last Taylor expansion, similarly as before, since $\Re\phi_n''(w_n,v)/n$ tends to a positive constant,
\begin{equation}\label{eq:estphinder}\Re\left\{\phi_n(w_n^{+},v)-\phi_n(w_n,v)\right\}\geq {{\eta}}n L_n^2\end{equation}
and an analogous estimate holds for \(\Re\left\{\phi_n(w_n^{-},v)-\phi_n(w_n,v)\right\}\). This gives
\begin{equation}\label{eq:In5estimate}|\tilde{I}_n^{(5)}|=\mathcal{O}\left(\frac{n^{3}\sqrt{t}}{\tilde d_n\sqrt{n}}e^{-{\eta}nL_n^2}\right)=\mathcal{O}\left(\frac{n^3\sqrt{t}e^{-{\eta} (\log n)^2}}{\log n}\right),\qquad n\to\infty.\end{equation}
Collecting all these estimates on \(\tilde{I}_n(u,v)\), we obtain \eqref{eq:limIn}. By taking into account that 
\[s=\Im z_n =  \Im F_{t,\mu_n}\left({x_t^*}+\frac{u}{c_t n}\right)\rightarrow \Im F_{t,\mu}\left({x_t^*}\right)= 
-\pi\psi_{t}({x_t^*})t,\]
we also obtain \eqref{eq:limAn} via \eqref{eq:estimate s}, and this completes the proof of the theorem.
\end{proof}

\section{Proof of Theorem \ref{thm_nv}}\label{section_thrm_nv}

For the proof of Theorem \ref{thm_nv}, we follow the same strategy as for the proof of Theorem \ref{thrm_t_fixed} in Section \ref{section_t_fixed}, but there are quite some technical issues to be dealt with differently, because of the fact that $t=t_n\to 0$. The most essential difference is that the saddle points $z_n$ and $w_n$ approach the real line as $n\to\infty$. We need to control the speed of convergence in order to obtain a saddle point approximation.

\subsection{Auxiliary results}

\begin{lemma}\label{lemma:comparisonG}
Let \(x^*\) be an interior point of the support of \(\mu\), and suppose that \(\delta>0\) is such that the interval \([x^{\ast}-\delta, x^{\ast}+\delta]\) belongs to the interior of the support. Let $m_n$ be defined by \eqref{assumption:rigidity}. Then there exists a constant $C_1>0$ and an index \(N\in\mathbb{N}\) such that for all \(n>N\), \(\epsilon>0\) and \(x\in [x^{\ast}-\delta, x^{\ast}+\delta]\) we have the inequality
\begin{equation}\label{eq: comparison G}
\left|G_{\mu_n}(x+i\epsilon)-G_{\mu}(x+i\epsilon)\right|\leq \frac{\tilde m_n\pi}{n\epsilon} \leq \frac{C_1(m_n+1)}{n\epsilon}.
\end{equation}
\end{lemma}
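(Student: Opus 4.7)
The plan is to rewrite the difference of Stieltjes transforms as a single integral against the signed measure $\mu_n-\mu$ and then apply integration by parts in order to transfer the difference onto the distribution functions $F_n$ and $F$, which are uniformly close by \eqref{eq:FnF}.

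Concretely, I would start from
\begin{equation}
G_{\mu_n}(x+i\epsilon)-G_{\mu}(x+i\epsilon)=\int\frac{d(\mu_n-\mu)(s)}{(x+i\epsilon)-s}
\end{equation}
and integrate by parts in the Riemann--Stieltjes sense. Since $\mu$ and $\mu_n$ are both supported in a common compact set by Assumption 2, and since the integrand $1/((x+i\epsilon)-s)$ tends to zero as $s\to\pm\infty$, all boundary terms vanish. Using $\frac{d}{ds}\bigl[1/(z-s)\bigr]=1/(z-s)^{2}$, this produces
\begin{equation}
G_{\mu_n}(x+i\epsilon)-G_{\mu}(x+i\epsilon)=-\int_{\mathbb{R}}\frac{F_n(s)-F(s)}{\bigl((x+i\epsilon)-s\bigr)^{2}}\,ds.
\end{equation}
Taking absolute values, bounding $|F_n(s)-F(s)|\leq \tilde m_n/n$ via \eqref{eq:FnF}, and evaluating the standard Poisson kernel integral
\begin{equation}
\int_{\mathbb{R}}\frac{ds}{(x-s)^{2}+\epsilon^{2}}=\frac{\pi}{\epsilon}
\end{equation}
yields the first inequality in \eqref{eq: comparison G}. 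The second inequality is then immediate from the bound $\tilde m_n\leq c(m_n+1)$ supplied by \eqref{eq:FnF}; one simply sets $C_1:=c\pi$.

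There is no substantial obstacle here. The only point that requires a little care is that $\mu_n$ is purely atomic, so the integration by parts must be justified either distributionally or by splitting $\mu_n-\mu$ into its atomic and absolutely continuous parts, but the same identity results in either case. I note moreover that the derivation does not actually use the hypothesis that $x$ lies inside $[x^{\ast}-\delta,x^{\ast}+\delta]$ nor that this interval belongs to the interior of the support of $\mu$; this hypothesis is presumably included only because it reflects the context in which the lemma will be applied in the sequel.
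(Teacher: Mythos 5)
Your proposal is correct and follows essentially the same route as the paper: integrate by parts to transfer the difference onto the distribution functions $F_n-F$, bound the sup-norm by $\tilde m_n/n$ via \eqref{eq:FnF}, and estimate the resulting Poisson-type integral by $\pi/\epsilon$. (The paper integrates over a finite interval $[a,b]$ containing both supports and bounds the arctan difference by $\pi$, while you integrate over all of $\mathbb{R}$ and evaluate the Poisson kernel exactly; this is a cosmetic difference, and your closing observation that the hypothesis on $x$ lying in $[x^*-\delta,x^*+\delta]$ is not actually used is accurate.)
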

\begin{proof}
Let $F_n$ and $F$ denote the distribution functions of $\mu_n$ and $\mu$, respectively. We define
\begin{align*}
g(s):=\frac{1}{x+i\epsilon-s}
\end{align*} 
and choose $a<b$ such that the supports of $\mu$ and $\mu_n$ lie in the interior of $[a,b]$.
Then by integration by parts and the absolute continuity of $g$ we have
\begin{align*}
G_{\mu_n}(x+i\epsilon)-G_{\mu}(x+i\epsilon)&=\int_a^b g\,d(F_n-F)=-\int_a^b(F_n-F)\,dg\\
&=-\int_a^b(F_n-F)(s)g'(s)ds.
\end{align*}
By \eqref{eq:FnF}, we have $\sup_{s\in[a,b]}\lv F_n(s)-F(s)\rv\leq \frac{\tilde m_n}{n}\leq \frac{c(m_n+1)}{n}$, and hence
\begin{align*}
&\lv G_{\mu_n}(x+i\epsilon)-G_{\mu}(x+i\epsilon)\rv\leq \frac{\tilde m_n}{n}\int_a^b 
\frac1{(x-s)^2+\epsilon^2}ds\\
&=\frac{\tilde m_n}{n\epsilon}\lb\arctan\lb\frac{b-x}{\epsilon}\rb-\arctan\lb\frac{a-x}{\epsilon}\rb\rb\leq\frac{\tilde m_n\pi}{n\epsilon}\leq \frac{C_1(m_n+1)}{n\epsilon}.
\end{align*}
\end{proof}

In the following, integral expressions of the form \(\int_{\alpha}^{\beta} f(s) d\mu_n (s)\) are always to be understood as \(\int\limits_{[\alpha,\beta]} f(s) d\mu_n (s)\).

\begin{lemma}\label{lemma:boundy}
Under the conditions of Theorem \ref{thm_nv}, for any $\epsilon>0$, there exist $\delta>0$ and $n_0>0$ such that
\begin{equation}
y_{t_n,\mu_n}(x)\geq (1-\epsilon)t_n\psi(x),
\end{equation}
for any $n>n_0$, $x\in[x^*-\delta, x^*+\delta]$.
\end{lemma}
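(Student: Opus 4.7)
The map $y\mapsto \int\frac{d\mu_n(s)}{(x-s)^2+y^2}$ is continuous and strictly decreasing on $(0,\infty)$, so by \eqref{def:y_t} we have $y_{t_n,\mu_n}(x)\geq y_0$ if and only if $\int\frac{d\mu_n(s)}{(x-s)^2+y_0^2}\geq 1/t_n$. Setting $y_0:=(1-\epsilon)t_n\psi(x)$, the task reduces to showing that this integral is at least $1/t_n$ uniformly in $x\in[x^*-\delta,x^*+\delta]$ for all $n\geq n_0$. The guiding heuristic is that for small $y$, $\int\frac{\psi(s)\,ds}{(x-s)^2+y^2}$ is essentially a Poisson kernel integral and behaves like $\pi\psi(x)/y$; plugging in $y=y_0$ yields $\pi/[(1-\epsilon)t_n]$, exceeding $1/t_n$ by a factor close to $\pi$. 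This generous safety margin is what allows the discretization error from replacing $\mu$ by $\mu_n$ to be absorbed.

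Concretely, since $\psi$ is continuous with $\psi(x^*)>0$ and $x^*$ is interior to the support, one may fix $\delta>0$ such that $[x^*-\delta,x^*+\delta]$ lies in the interior of the support, $\psi(x)\geq \psi(x^*)/2$ there, and $|\psi(s)-\psi(x)|\leq \eta$ for $|s-x|\leq \delta/2$, uniformly in $x$, for a small $\eta>0$ to be chosen in terms of $\epsilon$. A direct lower bound gives
\begin{equation*}
\int\frac{\psi(s)\,ds}{(x-s)^2+y_0^2}\geq (\psi(x)-\eta)\,\frac{2}{y_0}\arctan\frac{\delta}{2y_0},
\end{equation*}
and since $y_0\leq (1-\epsilon)t_n\|\psi\|_\infty\to 0$ (as $t_n\to 0$), the arctan factor is at least $\pi/2-\eta'$ for $n$ large, with $\eta'>0$ arbitrary. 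Invoking Lemma \ref{lemma:comparisonG} with imaginary part $y_0$ then yields
\begin{equation*}
\int\frac{d\mu_n(s)}{(x-s)^2+y_0^2}\geq \frac{(\psi(x)-\eta)(\pi-2\eta')}{y_0}-\frac{C_1(m_n+1)}{ny_0^2}.
\end{equation*}

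Multiplying by $t_n$, substituting $y_0=(1-\epsilon)t_n\psi(x)$, and using $\psi(x)\geq \psi(x^*)/2$ gives
\begin{equation*}
t_n\int\frac{d\mu_n(s)}{(x-s)^2+y_0^2}\geq \frac{(\psi(x)-\eta)(\pi-2\eta')}{(1-\epsilon)\psi(x)}-\frac{4C_1(m_n+1)}{n(1-\epsilon)^2 t_n \psi(x^*)^2}.
\end{equation*}
By choosing $\eta,\eta'$ sufficiently small, the first term can be made larger than any constant strictly less than $\pi/(1-\epsilon)$, and in particular strictly greater than $1$; the second term tends to $0$ as $n\to\infty$ by the hypothesis $nt_n/(m_n+1)\to\infty$. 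Hence the integral strictly exceeds $1/t_n$ uniformly in $x\in[x^*-\delta,x^*+\delta]$ for all $n$ large enough, which is precisely what is required. The main technical point is ensuring uniformity in $x$ throughout these estimates; this is secured by uniform continuity of $\psi$ on the compact interval $[x^*-\delta,x^*+\delta]$ and the uniform lower bound $\psi(x)\geq \psi(x^*)/2$.
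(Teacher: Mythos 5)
Your proposal is correct and the overall strategy is the same as the paper's: verify the defining inequality $\int\frac{d\mu_n(s)}{(x-s)^2+y_0^2}\geq 1/t_n$ with $y_0\sim(1-\epsilon)t_n\psi(x)$, using continuity of $\psi$ and the rigidity hypothesis $nt_n/(m_n+1)\to\infty$ to absorb the $\mu_n$-to-$\mu$ discretization error. The details differ in two respects worth noting. First, the paper sets $\alpha_n=(1-\epsilon)\psi(x^*)t_n$ (rather than $(1-\epsilon)t_n\psi(x)$) and lower-bounds the Poisson-type integral crudely by restricting to the microscopic window $[x-\alpha_n,x+\alpha_n]$ where the kernel is $\geq 1/(2\alpha_n^2)$; the safety margin then comes from the factor $\frac{1-\epsilon/2}{1-\epsilon}>1$. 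You instead integrate over a macroscopic window of half-width $\delta/2$ and exploit the exact identity $\int_{x-\delta/2}^{x+\delta/2}\frac{ds}{(x-s)^2+y_0^2}=\frac{2}{y_0}\arctan\frac{\delta}{2y_0}\to\frac{\pi}{y_0}$, so your margin is essentially the factor $\pi$, which is more transparent. Second, the paper controls the $\mu_n$-versus-$\mu$ discrepancy on the small window directly via \eqref{eq:comparemunmu}, whereas you invoke Lemma \ref{lemma:comparisonG} (Stieltjes transform comparison); these are interchangeable here and yield the same error $\mathcal O\left(\frac{m_n+1}{n y_0^2}\right)$, which is $o(1/t_n)$ exactly because $nt_n/(m_n+1)\to\infty$. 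Both routes are sound; your version has the small advantage of proving the stated inequality with $\psi(x)$ directly, sparing the implicit continuity step needed in the paper to pass from $\psi(x^*)$ to $\psi(x)$.
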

\begin{proof}
For any $\alpha_n>0$ and $x\in\mathbb R$, we have
the inequality
\begin{equation}
\frac{1}{n}\sum_{k=1}^n\frac{1}{\alpha_n^2+(a_{k}^{(n)}-x)^2}\geq \frac{1}{2\alpha_n^2} \int_{x-\alpha_n}^{x+\alpha_n}d\mu_n(s).
\end{equation}
We define $\alpha_n=(1-\epsilon)\psi(x^*) t_n$, so that by \eqref{eq:comparemunmu} we have 
\[\int_{x-\alpha_n}^{x+\alpha_n}d\mu_n(s)\geq \int_{x-\alpha_n}^{x+\alpha_n}d\mu(s)-\frac{2\tilde m_n}{n},\]
and hence
\begin{equation}
\frac{1}{n}\sum_{k=1}^n\frac{1}{\alpha_n^2+(a_{k}^{(n)}-x)^2}\geq \frac{1}{2\alpha_n^2} \left(\int_{x-\alpha_n}^{x+\alpha_n}d\mu(s) -\frac{2\tilde m_n}{n}\right).
\end{equation}

If $x$ is sufficiently close to $x^*$ and $\alpha_n\to 0$ as $n\to\infty$, there exists $n_0$ such that for $n>n_0$ we have
\[\int_{x-\alpha_n}^{x+\alpha_n}d\mu(s)\geq \left(1-\frac{\epsilon}{4}\right)\int_{x^*-\alpha_n}^{x^*+\alpha_n}d\mu(s)\geq 2\left(1-\frac{\epsilon}{2}\right)\psi(x^*)\alpha_n.\]
We then have
\begin{equation}\label{eq:ineqalphalemma}
\frac{1}{n}\sum_{k=1}^n\frac{1}{\alpha_n^2+(a_{k}^{(n)}-x)^2}\geq \psi(x^*)\left(1-\frac{\epsilon}{2}\right)\alpha_n^{-1}-\frac{\tilde m_n}{n\alpha_n^2}\geq (1-\epsilon)\psi(x^*)\alpha_n^{-1}=\frac{1}{t_n},
\end{equation}
if $n\alpha_n/\tilde m_n\to\infty$, which is true since $nt_n/(m_n+1) \to\infty$ as $n\to\infty$.
By definition of $y_{t_n,\mu_n}$, we have that $\alpha_n\leq y_{t_n,\mu_n}(x)$, and this proves the result.
\end{proof}

\begin{lemma}\label{lemma limyn}
Under the conditions of Theorem \ref{thm_nv}, we have
\begin{equation}
\lim_{n\to\infty}\frac{y_{t_n,\mu_n}(x)}{\pi t_n}=\psi(x),
\end{equation}
uniformly for $x\in[x^*-\delta, x^*+\delta]$ with $\delta$ sufficiently small.
\end{lemma}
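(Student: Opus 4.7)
Set $y_n:=y_{t_n,\mu_n}(x)$. Since $\psi$ is continuous on the support of $\mu$ and $\psi(x^*)>0$, we can shrink $\delta>0$ so that $[x^*-\delta,x^*+\delta]$ lies in the interior of the support and $\psi(x)\geq \tfrac12\psi(x^*)>0$ for all $x\in [x^*-\delta,x^*+\delta]$. Then $t_{cr}(x)=0$ and $y_n>0$ is characterized by
\begin{equation*}
  \int\frac{d\mu_n(s)}{(x-s)^2+y_n^2}=\frac{1}{t_n}.
\end{equation*}
The plan is to (i) upgrade Lemma \ref{lemma:boundy} to a two-sided control $y_n\asymp t_n$, (ii) invoke Lemma \ref{lemma:comparisonG} to replace $\mu_n$ by $\mu$ in this integral with a negligible error, and (iii) use the fact that $\psi$ is continuous and positive at $x$ to extract the precise asymptotics $y_n\sim \pi t_n \psi(x)$ via the standard Poisson-kernel approximation of the identity.

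For (i), Lemma \ref{lemma:boundy} gives $y_n \geq (1-\epsilon) t_n\psi(x)\geq c\, t_n$ with a constant $c>0$ independent of $n$ and of $x\in[x^*-\delta,x^*+\delta]$. On the other hand, taking $y=\sqrt{t_n}$ in \eqref{def:y_t} shows $y_n\leq\sqrt{t_n}$, so in particular $y_n\to 0$. For (ii), Lemma \ref{lemma:comparisonG} combined with the identity $\int d\mu(s)/((x-s)^2+y^2)=-y^{-1}\Im G_\mu(x+iy)$ yields
\begin{equation*}
\bigg|\int\frac{d\mu_n(s)}{(x-s)^2+y_n^2}-\int\frac{d\mu(s)}{(x-s)^2+y_n^2}\bigg| \leq \frac{C_1(m_n+1)}{n y_n^2}\leq \frac{C_1(m_n+1)}{c^2 n t_n^2},
\end{equation*}
which is $o(1/t_n)$ as $n\to\infty$, precisely by the hypothesis $nt_n/(m_n+1)\to\infty$. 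Hence
\begin{equation*}
\int\frac{d\mu(s)}{(x-s)^2+y_n^2}=\frac{1}{t_n}\bigl(1+o(1)\bigr),
\end{equation*}
uniformly in $x\in[x^*-\delta,x^*+\delta]$.

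For (iii), write $P_y(u)=\tfrac{1}{\pi}\cdot\tfrac{y}{u^2+y^2}$, the Poisson kernel on the upper half plane, so that $\int_\R P_y(u)\,du=1$, and rewrite
\begin{equation*}
y\int\frac{d\mu(s)}{(x-s)^2+y^2} = \pi \int_\R P_y(x-s)\psi(s)\,ds.
\end{equation*}
Since $\psi$ has compact support and is uniformly continuous on a neighborhood of $[x^*-\delta,x^*+\delta]$, a standard splitting of the integral at $|x-s|\leq\eta$ versus $|x-s|>\eta$ (using uniform continuity of $\psi$ on the first piece and the decay of $P_y$ on the second) shows that the right-hand side converges to $\pi\psi(x)$ as $y\to 0^+$, uniformly in $x\in[x^*-\delta,x^*+\delta]$. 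Setting $y=y_n$ and combining with the display at the end of the previous paragraph gives $\pi\psi(x)(1+o(1))/y_n=(1+o(1))/t_n$, i.e.\ $y_n/(\pi t_n)=\psi(x)(1+o(1))$ uniformly in $x$, which is the desired conclusion. The main technical point is the uniform smallness of the two error terms in (ii) and (iii): the hypothesis $nt_n/(m_n+1)\to\infty$ is exactly what makes the comparison error in (ii) negligible relative to $1/t_n$, while the stronger logarithmic hypothesis on $t_n$ plays no role here and will only be needed later in the saddle point analysis.
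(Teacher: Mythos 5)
Your proof is correct and follows essentially the same route as the paper: use Lemma \ref{lemma:boundy} to get the lower bound $y_{t_n,\mu_n}(x)\gtrsim t_n$, then Lemma \ref{lemma:comparisonG} to swap $\mu_n$ for $\mu$ with an error of relative size $\mathcal O((m_n+1)/(nt_n))=o(1)$, and finally the Poisson-kernel limit to pass to $\psi(x)$. The only cosmetic difference is that the paper works directly with the identity $\frac{y_{t_n,\mu_n}(x)}{\pi t_n}=-\frac{1}{\pi}\Im G_{\mu_n}(x+iy_{t_n,\mu_n}(x))$, which already isolates the ratio $y_n/t_n$, whereas you manipulate the defining equation $\int d\mu_n(s)/((x-s)^2+y_n^2)=1/t_n$ and multiply through by $y_n$ at the end; the two are equivalent via $\int d\mu(s)/((x-s)^2+y^2)=-y^{-1}\Im G_\mu(x+iy)$, as you note yourself.
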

\begin{proof}
Combining \eqref{eq:Gtau} and \eqref{eq:psitau}, we obtain
\[\frac{y_{t_n,\mu_n}(x)}{\pi t_n}=-\frac{1}{\pi}\Im G_{\mu_n}(x+iy_{t_n,\mu_n}(x))\]
for real $x$. Using Lemma \ref{lemma:boundy} and Lemma \ref{lemma:comparisonG}, we can conclude that
\[\frac{y_{t_n,\mu_n}(x)}{\pi t_n}=-\frac{1}{\pi}\Im G_{\mu}(x+iy_{t_n,\mu_n}(x))+\mathcal O\left(\frac{\tilde{m}_n}{nt_n}\right)\to \psi(x)\]
as $n\to\infty$ with $nt_n/\tilde{m}_n\to\infty$, uniformly for $x$ sufficiently close to $x^*$.
\end{proof}

\begin{lemma}\label{lemma:znwn2}
Under the conditions of Theorem \ref{thm_nv}, for $u, v$ in a compact set, there exists a constant $M>0$ such that $z_n=z_n(u,t_n)$ and $w_n=w_n(v,t_n)$ satisfy
\begin{equation}
|z_n-w_n|\leq \frac{M}{n},
\end{equation}
for sufficiently large $n$.
\end{lemma}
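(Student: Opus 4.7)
The plan is to exploit the defining saddle-point relations $H_{t_n,\mu_n}(z_n) = x_{t_n}^* + \frac{u}{c_{t_n}n}$ and $H_{t_n,\mu_n}(w_n) = x_{t_n}^* + \frac{v}{c_{t_n}n}$. Subtracting these and applying the integral form of the mean value theorem gives
\begin{equation*}
(z_n - w_n)\int_0^1 H'_{t_n,\mu_n}\bigl(w_n + \theta(z_n-w_n)\bigr)\, d\theta = \frac{u-v}{c_{t_n}n}.
\end{equation*}
Since $c_{t_n} = \psi_{t_n}(x_{t_n}^*) = y_{t_n,\mu}(x^*)/(\pi t_n)$ by \eqref{eq:psitau}, and since continuity of $\psi$ at $x^*$ combined with the definition of $y_{t,\mu}$ yields $y_{t_n,\mu}(x^*)/(\pi t_n) \to \psi(x^*) > 0$, the right-hand side is $O(1/n)$. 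The lemma therefore reduces to showing that the integral factor is bounded below in modulus, uniformly in $n$ and in $u,v$ lying in a compact set.

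The key analytic step is to prove that $|H'_{t_n,\mu_n}(z)| \geq 1/2$ on a disc $D(z_n^*, ct_n)$ around $z_n^* := x^* + i y_{t_n,\mu_n}(x^*)$, for some $c>0$ independent of $n$ and for $n$ large enough. At the base point, the defining relation for $y_{t_n,\mu_n}(x^*)$ lets one rewrite $H'_{t_n,\mu_n}(z_n^*) = 1 - t_n G'_{\mu_n}(z_n^*)$ as
\begin{equation*}
\Re H'_{t_n,\mu_n}(z_n^*) = 2 t_n\, y_{t_n,\mu_n}(x^*)^2 \int \frac{d\mu_n(s)}{\bigl((x^*-s)^2 + y_{t_n,\mu_n}(x^*)^2\bigr)^2},
\end{equation*}
and this tends to $1$ as $n\to\infty$ by combining Lemma \ref{lemma limyn} (so that $y_{t_n,\mu_n}(x^*)\sim \pi t_n\psi(x^*)$), a comparison of the remaining integral between $\mu_n$ and $\mu$ in the spirit of Lemma \ref{lemma:comparisonG}, and the standard small-$y$ asymptotic $\int d\mu(s)/((x^*-s)^2+y^2)^2 \sim \pi\psi(x^*)/(2y^3)$. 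To propagate this pointwise bound to the disc, I would estimate $|H''_{t_n,\mu_n}(z)| = O(1/t_n)$ on $D(z_n^*, y_{t_n,\mu_n}(x^*)/2)$ by using $|z-s| \geq y_{t_n,\mu_n}(x^*)/2$ for real $s$, and then use Taylor's theorem to absorb the variation of $H'_{t_n,\mu_n}$ across the smaller disc $D(z_n^*, ct_n)$.

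It remains to place $z_n$, $w_n$, and the segment joining them inside $D(z_n^*, ct_n)$. On that disc $H_{t_n,\mu_n}$ is conformal with $|H'_{t_n,\mu_n}|\geq 1/2$, so its image contains a disc of radius $\gtrsim t_n$ around $\tilde x^*_{t_n,\mu_n}:=H_{t_n,\mu_n}(z_n^*)$. Writing $\tilde x^*_{t_n,\mu_n} - x_{t_n}^*$ as $t_n$ times a difference of Stieltjes transforms at the nearby heights $y_{t_n,\mu_n}(x^*)$ and $y_{t_n,\mu}(x^*)$, and controlling the height discrepancy by the same Kolmogorov-distance argument used in Lemma \ref{lemma:comparisonG}, yields $|\tilde x^*_{t_n,\mu_n} - x_{t_n}^*| = O((m_n+1)/n) = o(t_n)$ thanks to the hypothesis $nt_n/(m_n+1)\to\infty$. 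Hence $x_{t_n}^* + \frac{u}{c_{t_n}n}$ and $x_{t_n}^* + \frac{v}{c_{t_n}n}$ lie well inside the image of $D(z_n^*, ct_n)$ for large $n$, the inverse $F_{t_n,\mu_n}$ is Lipschitz there with constant $\leq 2$, and the displayed identity then gives $|z_n - w_n| \leq M/n$ for a constant $M$ uniform in $(u,v)$. The principal obstacle is the careful juggling of three $n$-dependent scales---the saddle height $y_{t_n,\mu_n}(x^*) \sim t_n$, the argument shift of size $1/n$, and the rigidity error $(m_n+1)/n$---and the two hypotheses $nt_n/(\log n)^{1+\rho}\to\infty$ and $nt_n/(m_n+1)\to\infty$ are precisely what render the latter two negligible compared to $t_n$, making the Taylor expansion and the inverse-function argument run uniformly in $n$.
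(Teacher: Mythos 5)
Your proposal is mathematically sound and arrives at the same conclusion, but it takes a noticeably more elaborate route than the paper. The paper's proof works exclusively on the curve $\gamma_n$: since $z_n,w_n=F_{t_n,\mu_n}(\cdot)$ lie on the graph of $y_{t_n,\mu_n}$, it applies the real-interval mean value theorem to $F_{t_n,\mu_n}$ on $[x_{t_n}^*+v/(c_{t_n}n),\,x_{t_n}^*+u/(c_{t_n}n)]$ and reduces everything to a lower bound on $|H'_{t_n,\mu_n}(z)|$ for $z$ on $\gamma_n$. There the defining relation $\int d\mu_n(s)/|z-s|^2=1/t_n$ holds exactly, so $|H'_{t_n,\mu_n}(z)|\geq 2t_n(\Im z)^2\int d\mu_n/|z-s|^4$ follows in one line, and Lemmas \ref{lemma:boundy} and \ref{lemma limyn} together with \eqref{eq:comparemunmu} immediately give a positive lower bound. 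You instead aim for $\Re H'_{t_n,\mu_n}\to 1$ at a fixed reference point $z_n^*$ off the saddle points, then propagate this to a disc $D(z_n^*,ct_n)$ via a Taylor estimate on $H''_{t_n,\mu_n}$, and finally locate $z_n,w_n$ and the segment joining them inside that disc via a Koebe-type inverse-function argument. This does buy you something — your MVT identity integrates $H'$ along the \emph{chord} $[w_n,z_n]$ rather than $F'$ along a real interval, so a pointwise lower bound on $|H'|$ alone would not suffice (the integral could cancel), and you genuinely need $\Re H'$ bounded below; you correctly supply that. But the price is several pieces of extra machinery the paper avoids: (i) a comparison of $\int d\mu_n/|z-s|^4$ against $\int d\mu/|z-s|^4$, which is not literally Lemma \ref{lemma:comparisonG} and must be reproved by the same integration-by-parts device with a worse power of $\epsilon$ in the denominator; (ii) the $|H''|=O(1/t_n)$ bound, which is correct but requires the $y_{\hat c t_n,\mu_n}$ comparison trick (used elsewhere in the paper, but not needed for this lemma in the paper's proof); and (iii) the geometric step placing $z_n,w_n$ in the disc. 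Also, a small inaccuracy: your claim $|\tilde x^*_{t_n,\mu_n}-x_{t_n}^*|=O((m_n+1)/n)$ neglects the term coming from the height discrepancy $y_{t_n,\mu_n}(x^*)-y_{t_n,\mu}(x^*)$, which Lemma \ref{lemma limyn} controls only as $o(t_n)$, not at rate $(m_n+1)/n$; since $o(t_n)$ is all you actually use for the inverse-function step, your argument is not broken, but the stated bound should be $o(t_n)$.
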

\begin{proof}
We have 
\[z_n=F_{t_n,\mu_n}\left(x_{t_n}^*+\frac{u}{c_{t_n}n}\right),\qquad w_n=F_{t_n,\mu_n}\left(x_{t_n}^*+\frac{v}{c_{t_n}n}\right).\]
To prove the result, it is sufficient to show that
$|F_{t_n,\mu_n}'(x)|$ remains uniformly bounded for $x\in \mathbb R$ and $|x-x_{t_n}^*|<\frac{K}{n}$ for a sufficiently large constant $K>0$.
We have \[F_{t_n,\mu_n}'(x)=\frac{1}{H_{t_n,\mu_n}'(F_{t_n,\mu_n}(x))}.\]
Using the fact that $\int\frac{d\mu_n(s)}{|z-s|^2}=\frac{1}{t_n}$ for $z=F_{t_n,\mu_n}(x)$ on positive parts of the graph of $y_{t_n,\mu_n}$, we have
\begin{align}
|H_{t_n,\mu_n}'(z)|&=\left|1-t_n\int \frac{d\mu_n(s)}{(z-s)^2}\right|\label{H'_bound0}\\
&=t_n\left|\int \frac{d\mu_n(s)}{|z-s|^2}-\int \frac{d\mu_n(s)}{(z-s)^2}\right|\notag\\
&=t_n\left|\int \frac{|z-s|^2-(\overline{z}-s)^2}{|z-s|^4}d\mu_n(s)\right|.\notag
\end{align}
The modulus of the latter integral can be bounded below by the absolute value of its real part, and this gives
\begin{equation}\label{H'_bound}
|H_{t_n,\mu_n}'(z)|\geq 2t_n (\Im z)^2 \int \frac{d\mu_n(s)}{|z-s|^4}\geq 2t_n (\Im z)^2 \int_{\Re z_n-t_n}^{\Re z_n+t_n} \frac{d\mu_n(s)}{|z-s|^4}.
\end{equation}
By Lemma \ref{lemma:boundy}, this can be further estimated by
\begin{equation}\label{H'_bound2}
|H_{t_n,\mu_n}'(z)|\geq  2\tilde C t_n^{-1} \int_{\Re z_n-t_n}^{\Re z_n+t_n} d\mu_n(s),
\end{equation}
for some $\tilde C>0$,
and the right hand side of the latter expression is bounded below by a positive constant since $\int_{\Re z_n-t_n}^{\Re z_n+t_n} d\mu_n(s)\sim  2\psi(x^*)t_n$ as $n\to\infty$, which follows from a straightforward argument using \eqref{eq:comparemunmu} and the fact that $nt_n/\tilde m_n\to\infty$.
\end{proof}

\subsection{Asymptotics of \texorpdfstring{$A_n$}{An}}

It is now straightforward to give the asymptotics of $A_n$. We have
\[A_n(u,v)=\frac{1}{\pi (u-v)} \sin\left(\frac{s}{c_{t_n} t_n} (u-v)\right),\]
where $s$ is the imaginary part of the intersection of 
$\gamma$ with the vertical line through $x_0=x_n=\Re F_{t_n,\mu_n}\left(x_{t_n}^*+\frac{u}{c_{t_n} n}\right)$. In other words, if we make the same choice of integration contours in \eqref{I_n} as before, then \[s=\Im F_{t_n,\mu_n}\left(x_{t_n}^*+\frac{u}{c_{t_n} n}\right)=y_{t_n,\mu_n}\left(x_n\right).\]
Since $x_n\to x^*$, $c_{t_n}\to \psi(x^*)$ as $n\to\infty$ and $t_n\to 0$, it follows from Lemma \ref{lemma limyn} that
$\frac{s}{c_{t_n} t_n}\to \pi$, and consequently
\begin{equation}
\lim_{n\to\infty}A_n(u,v)=\frac{\sin(\pi (u-v))}{\pi (u-v)}.
\end{equation}

\subsection{Estimation of \texorpdfstring{$I_n$}{In}}
Here, we follow the estimates done in Section \ref{section_t_fixed} for the proof of Theorem \ref{thrm_t_fixed}. As already specified in the analysis of $A_n$, we take the same integration contours in \eqref{I_n} as before, by following the graph of $y_{t_n,\mu_n}$ and its complex conjugate for the contour $\gamma_n$, and by taking $x_0=x_n=\Re F_{t_n,\mu_n}(x_{t_n}^*+\frac{u}{c_{t_n} n})$.
At first sight, the fact that the saddle points $z_n$ and $w_n$ of the phase function $\phi_n$ approach the real line may appear problematic, as one may expect that the contributing neighborhoods of $z_n, \overline{z_n}$ and $w_n, \overline{w_n}$ will overlap.
However, this is not the case, essentially because the second derivative of the phase function $\phi_n$ blows up rapidly, which means that the contributing neighborhoods to the integral $I_n$ become small as well.
More concretely, in view of \eqref{eq:derivative}, we observe that
\begin{align*}
\beta_n=-\frac{d^2}{d\t^2}&\Big\vert_{\t=y_n}\Re \phi_n(x_n+i\t,u)=2ny_n^2 \int\frac{d\mu_n(a)}{\left((x_n-a)^2+y_n^2\right)^2}\\
&\geq  2n y_n^2 \frac{1}{(t_n^2+y_n^2)^2}\int_{x_n-t_n}^{x_n+t_n}d\mu_n(s).
\end{align*}
Using Lemma \ref{lemma limyn} and the fact that $n\int_{x_n-t_n}^{x_n+t_n}d\mu_n(s)> \epsilon n t_n$ for sufficiently small $\epsilon>0$ under the conditions of Theorem \ref{thm_nv} (as before, this follows from \eqref{eq:comparemunmu} and the fact that $nt_n/(m_n+1)\to\infty$), we obtain for some $K>0$ that
\begin{equation}\label{eq: secondder2}
\beta_n\geq \frac{Kn}{t_n}.
\end{equation} 
With this in mind, we can proceed along the same lines as in the proof of Theorem \ref{thrm_t_fixed}, splitting the integral $\tilde I_n$ in seven parts $\tilde I_n^{(n)},\ldots, \tilde I_n^{(7)}$. Since the imaginary parts of the saddle points are proportional to $t_n$ as $n\to\infty$, we need to take $L_n$ such that $L_n/t_n\to 0$, to avoid overlapping neighborhoods of the saddle points. A convenient choice will appear to be, instead of \eqref{def:Ln}, 
\begin{equation}
\label{def:Ln2}L_n=\sqrt{\frac{t_n}{n}}(\log n)^{\frac{1+\rho}{2}}.
\end{equation}

\paragraph{\bf Estimation of $\tilde I_n^{(4)}$.}
The estimates \eqref{eq:estI4}, \eqref{eq:estI42}, and \eqref{eq:In4estimate} remain valid using the same calculations as in Section \ref{section_t_fixed}, provided that we can show that the length of $\gamma_n\cap U_{L_n}(w_n)$ is $O(L_n)$ as $n\to\infty$, and that the angle between $\gamma_n$ and $z_n+i\mathbb R$ stays away from $0$ for large $n$.
To see this, note that for $w\in\gamma_n\cap U_{L_n}(w_n)$,
\begin{multline*}
H_{t_n,\mu_n}'(w)=1-t_n\int\frac{d\mu_n(s)}{(w-s)^2}=t_n\left(\int\frac{d\mu_n(s)}{|w-s|^2}- \int\frac{d\mu_n(s)}{(w-s)^2}\right)\\
=t_n\int \frac{|w-s|^2- (\overline w-s)^2}{|w-s|^4} d\mu_n(s).
\end{multline*}
Hence, 
\begin{multline*}
\Re H_{t_n,\mu_n}'(w)=2t y_{t_n,\mu_n}(\Re w)^2\int\frac{d\mu_n(s)}{|w-s|^4}\geq 2t_n y_{t_n,\mu_n}(\Re w)^2\int_{\Re w - y_{t_n,\mu_n}(\Re w)}^{\Re w + y_{t_n,\mu_n}(\Re w)}\frac{d\mu_n(s)}{|w-s|^4}\\
\geq \eta \frac{t_n}{y_{t_n,\mu_n}(\Re w)^2} \int_{\Re w - y_{t_n,\mu_n}(\Re w)}^{\Re w + y_{t_n,\mu_n}(\Re w)}d\mu_n(s).
\end{multline*}
By Lemma \ref{lemma limyn}, \eqref{eq:comparemunmu}, and the fact that $nt_n/(m_n+1)\to\infty$, this is bounded below by a positive constant.
It is also easily seen that $|H_{t_n,\mu_n}'(w)|\leq 2$, and it follows that the argument of $H_{t_n,\mu_n}'(w)$ remains bounded away from $\pm \pi/2$.
By \eqref{eq:identityHy}, this implies that $y_{t_n,\mu_n}'(\Re w)$ is bounded for $w\in\gamma_n\cap U_{L_n}(w_n)$, and then it easily follows that the length of $\gamma_n\cap U_{L_n}(w_n)$ is $\mathcal O(L_n)$ as $n\to\infty$, and that the angle between $\gamma_n$ and $z_n+i\mathbb R$ does not approach $0$.

\paragraph{\bf Estimation of $I_n^{(1)}$.}
Using the fact that $y_{t_n,\mu_n}'$ is bounded near $x^*$ (which we showed in the above paragraph), we see easily that the inequality \eqref{def:dn}, where the minimum is over $(z,w)\in [z_n+iL_n, z_n+i\infty)\times \gamma_n$, still holds.

By Lemma \ref{lemma:znwn2}, we still have \eqref{eq:znwnestimate}.
Using the fact that
\[n\left|g_{\mu_n}(z_n)-g_{\mu_n}(w_n)\right|=n\left|\int_{w_n}^{z_n}G_{\mu_n}(s)ds\right|\leq n\int_{w_n}^{z_n}\left|G_{\mu_n}(s)\right|\ |ds|,\]
and combining this with Lemma \ref{lemma:boundy} and Lemma \ref{lemma:comparisonG}, we obtain that \eqref{eq:phinznwnestimate} also holds. In the remaining part of the analysis of $I_n^{(1)}$, we follow similar estimates as in the proof of Theorem \ref{thrm_t_fixed}, but we need to estimate the error term $R_2(z)$ in the Taylor expansion in a different way. Instead of \eqref{eq:estimateR2}, we have for $|z-z_n|\leq L_n$
\begin{equation}
|R_2(z)|=\left|\int_{z_n}^{z}d\xi_1\int_{z_n}^{\xi_1}d\xi_2\int_{z_n}^{\xi_2}d\xi_3 \, \phi_n'''(\xi_3;u)\right|\leq \max|\phi_n'''(\xi;u)|  L_n^3,
\label{eq:estimatephi3a}\end{equation}
where we integrate over line segments, and where the maximum is over the line segment between $z_n$ and $z$.
Next, given $\xi$ on this line segment, we let $\gamma$ be the circle of radius $\frac{1}{2}\Im\xi\geq \frac{1}{2}(\Im z_n - L_n)$ around $\xi$, and we use Cauchy's theorem to estimate the maximum,
\begin{equation}\max|\phi_n'''(\xi;u)|=\left|\frac{1}{2\pi}\int_{\gamma} \frac{\phi_n''(\zeta;u)}{(\zeta-\xi)^2}d\xi\right|\leq \frac{2}{\Im\xi} \max_{\zeta\in\gamma}|\phi_n''(\zeta;u)|\leq \frac{2n\max_{\zeta\in\gamma}|1+t_nG_{\mu_n}'(\zeta)|}{t_n(\Im z_n-L_n)}.\label{eq:estimatephi3b}\end{equation}
By Lemma \ref{lemma:boundy}, it follows that $\Im z_n$ is bounded below by $\tilde{c} t_n$ for a sufficiently small constant $\tilde{c}>0$. 
Since  $L_n=o(t_n)$ as $n\to\infty$, we have
\[\Im z_n-L_n\geq \frac{\tilde{c}}{2}t_n,\qquad \Im\zeta\geq \frac{1}{2}\Im\xi\geq \frac{1}{2}(\Im z_n - L_n)\geq \frac{\tilde{c}}{4}t_n,\]
for $n$ sufficiently large.
Now we can use Lemma \ref{lemma limyn} to conclude that there exists a sufficiently small constant $\widehat c>0$, uniform in $\zeta$,  such that $y_{\widehat c t_n, \mu_n}(\Re\zeta)\leq \frac{\tilde{c}}{4}t_n\leq  \Im\zeta$. But for such a constant $\widehat c$, we have
\[\left|G_{\mu_n}'(\zeta)\right|\leq \int \frac{1}{|\zeta-s|^2} d\mu_n(s)\leq \frac{1}{\widehat c t_n},\]
by definition of $y_{t_n,\mu_n}$.
Substituting this in \eqref{eq:estimatephi3a} and \eqref{eq:estimatephi3b}, we obtain
\[|R_2(z)|\leq \frac{4nL_n^3}{\tilde{c} t_n^2} \left(1+\frac{1}{\widehat c}\right)=o(\beta_n L_n^2),\qquad n\to\infty.\]

Then, proceeding as in the proof of 
Theorem \ref{thrm_t_fixed}, we get
\begin{equation}
\label{eq:estimateI1-2}
\tilde I_n^{(1)}=\mathcal O\left(\frac{n^3\sqrt{t_n}}{d_n}e^{-\frac{\beta_n}{4} L_n^2}\right)=\mathcal O\left(n^{4}e^{-\eta (\log n)^{1+\rho}}\right)=o(1),\qquad n\to\infty.
\end{equation}
The same bound applies to $\tilde I_n^{(2)}$ and $\tilde I_n^{(3)}$.

\paragraph{\bf Estimation of $\tilde I_n^{(5)}$.}
For $\tilde I_n^{(5)}$ and $\tilde I_n^{(7)}$, we first note that \eqref{def:dntilde} still holds. Using a Taylor expansion and error estimate similar to the one for $\tilde I_n^{(1)}$, one verifies that
\eqref{eq:estphinder} improves to\[\Re\left\{\phi_n(w_n^{+},v)-\phi_n(w_n,v)\right\}\geq {{\eta}}\frac{n}{t_n} L_n^2\]
because $\frac{t_n}{n}\Re\phi_n''(w_n,v)$ is bounded in absolute value und bounded away from zero as $n\to\infty$. This then leads to, instead of \eqref{eq:In5estimate},
\begin{equation}\label{eq:In5estimate2}|\tilde{I}_n^{(5)}|=\mathcal{O}\left(\frac{n^{3}t_n}{\tilde d_n\sqrt{n}}e^{-{\eta}\frac{n}{t_n}L_n^2}\right)=\mathcal{O}\left(n^3 e^{-{\eta} (\log n)^{1+\rho}}\right),\qquad n\to\infty.\end{equation}
Combining the above estimates, we get that 
$\lim_{n\to\infty}\tilde I_n=0$, which completes the proof of Theorem \ref{thm_nv}.

It remains to consider the factor $\exp(-\frac{1}{2nt_n c_{t_n}^2}(u^2-v^2)+\frac{1}{c_{t_n}t_n}(u-v)(x_0-x^*_{t_n}))$ that was neglected when passing from $I_n$ to $\tilde{I}_n$. We will show that $x_0-x^*_{t_n}=x_n-x^*_{t_n}=\O(t_n)$, which clearly suffices. Expanding $F_{t_n,\mu_n}$ around $H_{t_n,\mu_n}(x^*+iy_{t_n,\mu_n}(x^*))$, we get by the convergence of $c_{t_n}$ and the uniform boundedness of $F'_{t_n,\mu_n}$ around $x^*_{t_n}$ (see the proof of Lemma \ref{lemma:znwn2}) 
\begin{align*}
F_{t_n,\mu_n}\lb x_{t_n}^*+\frac u{c_{t_n}n}\rb=x^*+\O\lb\frac1n+\left\lv H_{t_n,\mu_n}(x^*+iy_{t_n,\mu_n}(x^*))-H_{t_n,\mu}(x^*+iy_{t_n,\mu}(x^*))\right\rv\rb.
\end{align*}
Equation \eqref{H'_bound0} and $t_n\int\frac{d\mu_n(s)}{|z-s|^2}=1$ show that $H'_{t_n,\mu_n}$ is uniformly bounded. Since by Lemma \ref{lemma limyn} $y_{t_n,\mu_n}(x^*)-y_{t_n,\mu}(x^*)=\O(t_n)$, we have $H_{t_n,\mu_n}(x^*+iy_{t_n,\mu_n}(x^*))-H_{t_n,\mu_n}(x^*+iy_{t_n,\mu}(x^*))=\O(t_n)$. Furthermore, Lemmas \ref{lemma:comparisonG} and \ref{lemma limyn} show that $H_{t_n,\mu_n}(x^*+iy_{t_n,\mu}(x^*))-H_{t_n,\mu}(x^*+iy_{t_n,\mu}(x^*))=\O(1/n)$. It remains to bound $\lv x^*-x_{t_n}^*\rv$. Since $H_{t_n,\mu}$ maps $x^*+iy_{t_n,\mu}(x^*)$ to the real line, we have $x^*-x_{t_n}^*=t_n\Re G_\mu(x^*+iy_{t_n,\mu}(x^*))$. Now, since $y_{t_n,\mu}(x^*)\to0$, $\Re G_\mu(x^*+iy_{t_n,\mu}(x^*))$ converges to the Hilbert transform
\begin{align*}
\int \frac{d\mu(s)}{x^*-s}
\end{align*}
of $\mu$ at $x^*$, the integral being understood as a principal value integral.
This proves $x_n-x^*_{t_n}=\O(t_n)$.

\section{Proof of Theorem \ref{thrm_v}}\label{section_thrm_v}
Throughout this section we assume Assumptions 1 and 2, $t_n$ is a sequence such that \(t_n \to 0\), \(\frac{n t_n^{\frac{1+\k}{1-\k}}}{(\log n)^{1+\rho}}\to \infty\), and \(\frac{n t_n^{\frac{1+\k}{1-\k}}}{m_n+1}\to \infty\), as \(n\to \infty\), and \(x^{\ast}\) is an interior point of the support of \(\mu\) such that \(\psi(x)\sim C\vert x-x^{\ast}\vert^{\k}\), as \(x\to x^{\ast}\), with \(0<\k<1\). 
\subsection{Auxiliary results}
For the large $n$ asymptotics of $A_n$, we need asymptotic equivalence of $y_{t_n,\mu}(x^*)$ and $y_{t_n,\mu_n}(x_n)$, where $x_n=\Re z_n$. First, the asymptotic equivalence is shown for  $y_{t_n,\mu}(x^*)$ and $y_{t_n,\mu_n}(x^*)$ (Lemma \ref{Lemma4.5}). By a Taylor expansion of $y_{t_n,\mu_n}(x^*)$ around $x^*$, this is transferred to $y_{t_n,\mu_n}(x_n)$ (Lemma \ref{Lemma4.7}). The necessary estimates on the derivative $y_{t_n,\mu_n}'$  and the difference $\lv x_n-x^*\rv$ are contained in Lemmas \ref{Lemma4.4} and \ref{Lemma4.6}, respectively. 
\begin{lemma}\label{Lemma4.1}
	For every \(\epsilon>0\) there exist \(\delta>0 \) and \(n_0\in \mathbb{N}\) such that for every \(n>n_0\) and \(x\in [x^{\ast}-\delta, x^{\ast}+\delta] \) we have
	\[y_{t_n,\mu_n}(x)\geq (1-\epsilon) \left(\frac{C}{\k +1}\right)^{\frac{1}{1-\k}} t_n^{\frac{1}{1-\k}}.\]
\end{lemma}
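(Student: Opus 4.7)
The plan is to show that for a suitably chosen $\alpha_n$, the defining inequality for $y_{t_n,\mu_n}(x)$ forces $y_{t_n,\mu_n}(x)\ge \alpha_n$. Concretely, set
\[
\alpha_n := (1-\epsilon)\left(\frac{C}{\k+1}\right)^{\frac{1}{1-\k}} t_n^{\frac{1}{1-\k}},
\]
and recall from \eqref{def:y_t} that it suffices to verify
\[
\frac{1}{n}\sum_{k=1}^{n}\frac{1}{\alpha_n^{2}+(a_k^{(n)}-x)^{2}}>\frac{1}{t_n}
\]
for $x\in[x^{*}-\delta,x^{*}+\delta]$ and $n$ large enough. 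Following the strategy of Lemma \ref{lemma:boundy}, I would bound this Poisson-type sum from below by the $\mu_n$-mass of $[x-\alpha_n,x+\alpha_n]$ divided by $2\alpha_n^{2}$, and then invoke \eqref{eq:comparemunmu} to pass from $\mu_n$ to $\mu$, producing an error $\tilde m_n/(n\alpha_n^{2})$.

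The core step is a uniform estimate of $\mu([x-\alpha_n,x+\alpha_n])$ for $x$ near $x^{*}$, using the local behavior $\psi(s)\sim C|s-x^{*}|^{\k}$. Writing $\eta=x-x^{*}$, a direct calculation gives
\[
\int_{\eta-\alpha_n}^{\eta+\alpha_n}|u|^{\k}du=\frac{|\eta+\alpha_n|^{\k+1}+|\eta-\alpha_n|^{\k+1}}{\k+1}\ \text{ if }|\eta|\le\alpha_n,
\]
and a similar formula when $|\eta|>\alpha_n$. Since this quantity is minimized at $\eta=0$, one obtains
\[
\mu([x-\alpha_n,x+\alpha_n])\ge \frac{2C\alpha_n^{\k+1}}{\k+1}\bigl(1+o(1)\bigr)
\]
uniformly in $x\in[x^{*}-\delta,x^{*}+\delta]$ provided $\delta>0$ is chosen small enough (so that the asymptotic $\psi(s)\sim C|s-x^{*}|^{\k}$ is accurate throughout the integration interval).

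Combining the pieces yields
\[
\frac{1}{n}\sum_{k=1}^{n}\frac{1}{\alpha_n^{2}+(a_k^{(n)}-x)^{2}}\ge \frac{C\alpha_n^{\k-1}}{\k+1}\bigl(1+o(1)\bigr)-\frac{\tilde m_n}{n\alpha_n^{2}}.
\]
Plugging in the chosen $\alpha_n$, the leading term simplifies to $(1-\epsilon)^{\k-1}/t_n$, which, since $\k-1<0$, exceeds $1/t_n$ by a fixed positive factor. The error $\tilde m_n/(n\alpha_n^{2})$ is of order $\tilde m_n/(n t_n^{2/(1-\k)})$, and the ratio of this to $1/t_n$ equals $\tilde m_n/(n t_n^{(1+\k)/(1-\k)})$, which tends to $0$ exactly by the assumption $n t_n^{(1+\k)/(1-\k)}/(m_n+1)\to\infty$ together with $\tilde m_n \le c(m_n+1)$. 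Hence the full inequality holds for all large $n$ uniformly in $x\in[x^{*}-\delta,x^{*}+\delta]$, yielding $y_{t_n,\mu_n}(x)\ge\alpha_n$ and proving the lemma.

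The only subtlety I anticipate is making the estimate on $\mu([x-\alpha_n,x+\alpha_n])$ genuinely uniform in $x$: one must quantify the $(1+o(1))$ in the local expansion of $\psi$ so that the error is controlled simultaneously over the entire shrinking family of intervals $[x-\alpha_n,x+\alpha_n]$ as $x$ varies in $[x^{*}-\delta,x^{*}+\delta]$. This is handled by choosing $\delta$ small so that $\psi(s)=C|s-x^{*}|^{\k}(1+o(1))$ with $o(1)$ arbitrarily small on $[x^{*}-2\delta,x^{*}+2\delta]$, which contains all integration intervals once $\alpha_n<\delta$; the calculation of the minimizer in $\eta$ is then unaffected at leading order.
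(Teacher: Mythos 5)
Your proposal is correct and follows essentially the same route as the paper: lower-bound $\frac1n\sum_k\bigl(\alpha_n^2+(a_k^{(n)}-x)^2\bigr)^{-1}$ by $\mu_n([x-\alpha_n,x+\alpha_n])/(2\alpha_n^2)$, transfer to $\mu$ via \eqref{eq:comparemunmu}, and use $\psi(s)\sim C|s-x^*|^\kappa$ together with $n t_n^{(1+\kappa)/(1-\kappa)}/(m_n+1)\to\infty$ to absorb the $\tilde m_n/(n\alpha_n^2)$ error. The only cosmetic difference is in where the slack is placed: the paper builds a factor $(1-2\tilde\epsilon)$ directly into $\alpha_n$ and later relates $\tilde\epsilon$ to $\epsilon$, whereas you take $\alpha_n=(1-\epsilon)(C/(\kappa+1))^{1/(1-\kappa)}t_n^{1/(1-\kappa)}$ and exploit that $(1-\epsilon)^{\kappa-1}>1$; these are equivalent, and your explicit computation of the integral $\int_{\eta-\alpha_n}^{\eta+\alpha_n}|u|^\kappa\,du$ and its minimization at $\eta=0$ supplies the detail that the paper compresses into the inequality $\int_{x-\alpha_n}^{x+\alpha_n}d\mu\geq(1-\tilde\epsilon/4)\int_{x^*-\alpha_n}^{x^*+\alpha_n}d\mu$.
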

\begin{proof}
We define the sequence \(\alpha_n=\left(\frac{(1-2\tilde{\epsilon})C t_n}{\k +1}\right)^{\frac{1}{1-\k}}\) for some \(\tilde{\epsilon}\in \left(0, \frac{1}{2}\right). \) Then we have \(n\alpha_n^{\k+1}\to \infty\) as \(n\to \infty\). By \eqref{eq:comparemunmu} we have for any real \(x\) that 
\[\int_{x-\alpha_n}^{x+\alpha_n} d\mu_n(s) \geq \int_{x-\alpha_n}^{x+\alpha_n} d\mu(s)-\frac{2\tilde m_n}{n}.\]
Moreover, there exists a suitably small \(\delta>0\) such that for \(x\in[x^{\ast}-\delta, x^{\ast}+\delta]\) and \(n>n_0\)
\[\int_{x-\alpha_n}^{x+\alpha_n} d\mu(s) \geq \left(1-\frac{\tilde{\epsilon}}{4}\right)\int_{x^{\ast}-\alpha_n}^{x^{\ast}+\alpha_n} d\mu(s) \geq \frac{2(1-\tilde{\epsilon}) C \alpha_n^{\k+1}}{\k +1}. \] 
This gives
\begin{align*}
\frac{1}{n} \sum_{k=1}^n \frac{1}{\alpha_n^2 + (a_k^{(n)}-x)^2} &\geq \frac{1}{2\alpha_n^2 }\int_{x-\alpha_n}^{x+\alpha_n} d\mu_n(s) \geq \frac{1}{2\alpha_n^2 } \left\{\frac{2(1-\tilde{\epsilon}) C \alpha_n^{\k+1}}{\k +1} -\frac{2\tilde m_n}{n} \right\}\\
&\geq \frac{(1-2\tilde{\epsilon})C}{\k+1} \alpha_n^{\k+1} = \frac{1}{t_n}.
\end{align*}
Here we used the fact that \(\frac{n t_n^{\frac{1+\k}{1-\k}}}{m_n+1}\to \infty\) and hence $\frac{\tilde m_n}{n}=o(\alpha_n^{\kappa+1})$ as $n\to\infty$.
It follows that \(y_{t_n,\mu_n}(x) \geq \alpha_n\) for \(n>n_0\) and \(x\in[x^{\ast}-\delta, x^{\ast}+\delta]\). The statement now easily follows by choosing a suitably small \(\tilde{\epsilon}\).
\end{proof}

\begin{lemma} \label{Lemma4.2} Let \(\alpha\in \mathbb{R}\) and \( \beta >0\). Then there exist constants \(K,K' >0\) such that 
	\[\vert \Im G_\mu (x^{\ast}+ \alpha+  i\beta ) \vert \leq K \vert \alpha \vert^{\k} + K' \beta^{\k},\]
	as \(\alpha + i \beta \to 0\).
	
\end{lemma}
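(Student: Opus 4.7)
The approach is to write $|\Im G_\mu(x^{\ast}+\alpha+i\beta)|$ explicitly as a Poisson-type integral against the density $\psi$, and then split the integration domain into a small neighborhood of $x^{\ast}$, where the power-law behavior $\psi(x)\sim C|x-x^{\ast}|^{\kappa}$ controls the integrand, and its complement, where the Poisson kernel itself is uniformly small.

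Concretely, starting from
\[
|\Im G_\mu(x^{\ast}+\alpha+i\beta)|=\beta\int\frac{\psi(x^{\ast}+u)\,du}{(\alpha-u)^2+\beta^2},
\]
I would fix $\delta>0$ small enough that $\psi(x^{\ast}+u)\leq C'|u|^{\kappa}$ holds for $|u|\leq\delta$, which is possible by the assumed local asymptotic of $\psi$. Since $\mu$ is compactly supported and $\psi$ is bounded, for $|\alpha|\leq\delta/2$ one has $(\alpha-u)^2+\beta^2\geq\delta^2/4$ uniformly on the far region, so the contribution from $|u|>\delta$ is $\mathcal O(\beta)$, and this is absorbed into $\mathcal O(\beta^{\kappa})$ since $\beta\leq 1$ and $\kappa<1$. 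For the near region $|u|\leq\delta$, the affine substitution $u=\alpha+\beta v$ transforms the integral into
\[
\beta\int_{|u|\leq\delta}\frac{C'|u|^{\kappa}\,du}{(\alpha-u)^2+\beta^2}\leq C'\int_{\mathbb R}\frac{|\alpha+\beta v|^{\kappa}}{1+v^2}\,dv,
\]
after extending the domain to all of $\mathbb R$. Applying the subadditivity inequality $|a+b|^{\kappa}\leq|a|^{\kappa}+|b|^{\kappa}$, valid for $0<\kappa\leq 1$, to the numerator splits this bound into $|\alpha|^{\kappa}\int_{\mathbb R}\frac{dv}{1+v^2}+\beta^{\kappa}\int_{\mathbb R}\frac{|v|^{\kappa}\,dv}{1+v^2}$, which yields the advertised estimate $K|\alpha|^{\kappa}+K'\beta^{\kappa}$.

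The hypothesis $\kappa<1$ enters precisely to ensure $\int_{\mathbb R}\frac{|v|^{\kappa}}{1+v^2}\,dv<\infty$ at infinity (integrability at the origin is automatic since $\kappa>0$). This is really the only delicate point of the argument; had $\kappa\geq 1$, the rescaled integral would diverge, consistent with the fact that $\Im G_\mu$ then no longer obeys a Hölder bound of this form and a more refined local expansion of $\psi$ (with possible logarithmic correction at the threshold $\kappa=1$) would be required. With $\delta$ chosen uniformly small in a neighborhood of the origin, the remainder of the reasoning is a direct calculation.
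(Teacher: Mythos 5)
Your proof is correct. It reaches the same bound as the paper but organizes the decomposition differently. The paper first bounds $\psi(s) \leq K_1|s-x^*|^\kappa$ over the entire support $[a,b]$ (combining the local asymptotic with continuity of $\psi$), then assumes WLOG $\alpha > 0$ and splits the integration domain into the three intervals $[a,x^*]$, $[x^*,x^*+\alpha]$, $[x^*+\alpha,b]$, handling each with its own scaling substitution; the subadditivity $|s-x^*|^\kappa \leq |s-x^*-\alpha|^\kappa + \alpha^\kappa$ appears only in the third piece. You instead invoke the power-law bound only in a fixed neighborhood $|u|\le\delta$, dispose of the far region with a trivial $\mathcal O(\beta)$ bound, and handle the near region with a single affine substitution $u=\alpha+\beta v$ extended to all of $\mathbb R$, applying $|\alpha+\beta v|^\kappa \le |\alpha|^\kappa + \beta^\kappa|v|^\kappa$ uniformly. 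This buys a more symmetric argument that avoids the sign case analysis and the three-way domain split, at essentially no extra cost. Both approaches hinge on $\kappa<1$ exactly as you observe, namely through the convergence of $\int_{\mathbb R}\frac{|v|^\kappa}{1+v^2}\,dv$, and your closing remark about what goes wrong at $\kappa\ge 1$ is accurate.
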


\begin{proof}
	Let \(\mu\) be supported on \([a,b]\). Then we obtain for some constant \(K_1>0\)
	\[\vert \Im G_\mu (x^{\ast}+ \alpha+  i\beta ) \vert \leq K_1 \beta  \int_a ^{b} \frac{\vert s-x^{\ast}\vert^{\k}}{(x^{\ast} -s +\alpha)^2 + \beta^2} ds.\]
	Assuming without loss of generality \(\alpha >0 \) and splitting up the integral into three parts we obtain
	\begin{align*}\vert \Im G_\mu (x^{\ast}+ \alpha+  i\beta ) \vert &\leq K_1  \beta \Bigg\{\int_a ^{x^{\ast}} \frac{\vert s-x^{\ast}\vert^{\k}}{(x^{\ast} -s)^2 + \beta^2} ds+ \int_{x^{\ast}} ^{x^{\ast}+\alpha} \frac{\vert s-x^{\ast}\vert^{\k}}{(x^{\ast}+\alpha -s)^2 + \beta^2} ds \\
	&+ \int_{x^{\ast}+\alpha} ^{b} \frac{\vert s-x^{\ast}\vert^{\k}}{(x^{\ast}+\alpha -s)^2 + \beta^2} ds\Bigg\}.
	\end{align*}
For the first integral we have for some constant \(K_2 >0\)
	\[\int_a ^{x^{\ast}} \frac{\vert s-x^{\ast}\vert^{\k}}{(x^{\ast} -s)^2 + \beta^2} ds =\beta^{\k -1} \int_{\frac{a-x^{\ast}}{\beta}} ^{0} \frac{\vert s\vert^{\k}}{s^2 +1}\leq K_2 \beta^{\k -1}.\]
For the second integral we have for some constant \(K_3 >0\)
	\[\int_{x^{\ast}} ^{x^{\ast}+\alpha} \frac{\vert s-x^{\ast}\vert^{\k}}{(x^{\ast}+\alpha -s)^2 + \beta^2} ds = \alpha^{\k-1}\int_{0} ^{1} \frac{s^{\k}}{(1-s)^2+\left(\frac{\beta}{\alpha}\right)^2}ds \leq K_3 \frac{\alpha^{\k}}{\beta}.\]
Finally, for the third integral we obtain for some constant \(K_4>0\)
	\[\int_{x^{\ast}+\alpha} ^{b} \frac{\vert s-x^{\ast}\vert^{\k}}{(x^{\ast}+\alpha -s)^2 + \beta^2} ds\leq \int_{x^{\ast}+\alpha} ^{b} \frac{\vert s-x^{\ast}-\alpha\vert^{\k} + \alpha^{\k}}{(x^{\ast}+\alpha -s)^2 + \beta^2} ds \leq K_4 \left(\frac{\alpha^{\k}}{\beta}+ \beta^{\k-1}\right),\]
which proves the statement.
	
\end{proof}

\begin{lemma} \label{Lemma4.3}
For every constant \(K>0\) we have
\[\limsup_{n\to\infty} \sup_{x\in\left[x^{\ast}-Kt_n^{\frac{1}{1-\k}},~ x^{\ast}+Kt_n^{\frac{1}{1-\k}} \right]} \frac{y_{t_n,\mu_n} (x)}{t_n^{\frac{1}{1-\k}}} <\infty.\]
In other words, we have \(y_{t_n,\mu_n} (x) = \mathcal{O}(t_n^{\frac{1}{1-\k}})\), if \(x-x^{\ast}=\mathcal{O}(t_n^{\frac{1}{1-\k}})\), as \(n\to\infty\).
\end{lemma}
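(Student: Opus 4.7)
The plan is to apply the infimum characterization \eqref{def:y_t} directly: if I set $y = M t_n^{1/(1-\k)}$ for a sufficiently large constant $M > 0$ and can show
\[
\int \frac{d\mu_n(s)}{(x-s)^2 + y^2} \leq \frac{1}{t_n}
\]
for $n$ large, uniformly in $x$ with $|x-x^{\ast}|\leq K t_n^{1/(1-\k)}$, then the infimum in \eqref{def:y_t} immediately yields $y_{t_n,\mu_n}(x) \leq M t_n^{1/(1-\k)}$, which is precisely the claim. I would split the integral as $\int d\mu_n = \int d\mu + \int d(\mu_n - \mu)$ and handle each piece separately using the preceding lemmas in this section together with Lemma \ref{lemma:comparisonG}.

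For the $\mu$-part, I would use the identity $\int d\mu(s)/((x-s)^2+y^2) = -y^{-1}\Im G_\mu(x+iy)$ and apply Lemma \ref{Lemma4.2} with $\alpha = x-x^{\ast}$ and $\beta = y$, both of which are $\O(t_n^{1/(1-\k)})$ and tend to $0$. The exponent identity $\k/(1-\k) - 1/(1-\k) = -1$ produces the key cancellation and leads to a bound of the form
\[
\int \frac{d\mu(s)}{(x-s)^2+y^2} \leq \frac{c_1 K^\k + c_2 M^\k}{M}\cdot \frac{1}{t_n},
\]
where $c_1,c_2$ depend only on $\mu$. Since $\k<1$, both $K^\k/M \to 0$ and $M^{\k-1} \to 0$ as $M \to \infty$, so $M$ can be chosen large enough (depending only on $K$ and on $\mu$) to make this contribution at most, say, $1/(2t_n)$.

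For the correction, Lemma \ref{lemma:comparisonG} gives $|G_{\mu_n}(x+iy) - G_\mu(x+iy)| \leq C_1(m_n+1)/(ny)$, so taking imaginary parts and dividing by $y$ yields
\[
\left|\int \frac{d(\mu_n-\mu)(s)}{(x-s)^2+y^2}\right| \leq \frac{C_1(m_n+1)}{ny^2} = \frac{C_1}{M^2}\cdot \frac{m_n+1}{nt_n^{(1+\k)/(1-\k)}}\cdot \frac{1}{t_n}.
\]
The standing hypothesis $nt_n^{(1+\k)/(1-\k)}/(m_n+1) \to \infty$ forces this to be $o(1/t_n)$, so summing the two bounds keeps the test integral below $1/t_n$ for $n$ large, uniformly in $x$ across the window. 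The only genuinely delicate point is that the rigidity hypothesis on $m_n$ has to match exactly the scale $y = \Theta(t_n^{1/(1-\k)})$ on which the two Stieltjes transforms are compared; the exponent $(1+\k)/(1-\k)$ in the assumption is dictated precisely by this matching, and its appearance here is what ties the conditions of Theorem \ref{thrm_v} to the scaling $y \sim t_n^{1/(1-\k)}$ of $y_{t_n,\mu_n}$ near $x^{\ast}$.
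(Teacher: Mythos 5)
Your argument is correct and leads to the stated bound, but the route is genuinely different from the paper's. The paper works from the \emph{equation} $\int d\mu_n(s)/\big((x-s)^2 + y_{t_n,\mu_n}(x)^2\big) = 1/t_n$ at the actual (unknown) height $y_{t_n,\mu_n}(x)$, rewrites it as $y_{t_n,\mu_n}(x)/(\pi t_n) = -\pi^{-1}\Im G_{\mu_n}\big(x + i y_{t_n,\mu_n}(x)\big)$, replaces $G_{\mu_n}$ by $G_\mu$ via Lemma~\ref{lemma:comparisonG}, and then uses Lemma~\ref{Lemma4.2} to reach the self-referential estimate $y_{t_n,\mu_n}(x) = \mathcal O\big(t_n^{1/(1-\k)} + t_n\, y_{t_n,\mu_n}(x)^\k\big)$, which is closed by a short bootstrap. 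Crucially, this needs the a~priori lower bound $y_{t_n,\mu_n}(x)\gtrsim t_n^{1/(1-\k)}$ from Lemma~\ref{Lemma4.1} to control the error coming from Lemma~\ref{lemma:comparisonG}. Your proof instead plugs the explicit test height $y = M t_n^{1/(1-\k)}$ into the \emph{infimum} characterization~\eqref{def:y_t}, converting the problem into a one-sided inequality; this sidesteps both the self-referential manipulation and the reliance on Lemma~\ref{Lemma4.1}, since the comparison in Lemma~\ref{lemma:comparisonG} is now applied at a known deterministic scale. The analytic core is identical — Lemma~\ref{Lemma4.2} for the $\mu$-part, Lemma~\ref{lemma:comparisonG} (through $\int d(\mu_n-\mu)(s)/((x-s)^2+y^2) = -y^{-1}\Im\big(G_{\mu_n}-G_\mu\big)(x+iy)$) for the correction, and the exponent identities $\k/(1-\k) - 1/(1-\k) = -1$ and $2/(1-\k) = (1+\k)/(1-\k) + 1$ — but your packaging is cleaner, makes the role of the free constant $M$ transparent, and correctly identifies why the hypothesis $n t_n^{(1+\k)/(1-\k)}/(m_n+1)\to\infty$ is exactly the right rigidity scale.
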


\begin{proof}
By Lemma \ref{lemma:comparisonG} and Lemma \ref{Lemma4.1} we have
\[\frac{y_{t_n,\mu_n}(x)}{\pi t_n} = -\frac{1}{\pi} \Im G_{\mu_n}(x+iy_{t_n,\mu_n}(x)) = -\frac{1}{\pi} \Im G_{\mu}(x+iy_{t_n,\mu_n}(x)) + \mathcal{O}\left(\frac{m_n +1}{n t_n^{\frac{1}{1-\k}}}\right),\]
as \(n\to\infty\). Now, using Lemma \ref{Lemma4.2} we obtain
\[y_{t_n,\mu_n}(x) = \mathcal{O}\left( t_n\left[(x-x^{\ast})^\k + y_{t_n,\mu_n}(x)^{\k}\right]+\frac{m_n+1}{nt_n^{\frac{\k}{1-\k}}} \right)=\mathcal{O}\left(t_n t_n^{\frac{\k}{1-\k}}+t_n y_{t_n,\mu_n}(x)^{\k} \right) ,\]
as \(n\to\infty\). Dividing this by \(y_{t_n,\mu_n}(x)^{\k}\) gives 
\[y_{t_n,\mu_n}(x)^{1-\k} =\mathcal{O}\left(t_n\right),\]
as \(n\to\infty\), from which the statement follows.
\end{proof}

\begin{lemma}\label{Lemma4.4}
For every constant \(K>0\) we have
\[\limsup_{n\to\infty} \sup_{x\in\left[x^{\ast}-Kt_n^{\frac{1}{1-\k}},~ x^{\ast}+Kt_n^{\frac{1}{1-\k}} \right]} \vert y_{t_n,\mu_n}'(x) \vert  <\infty.\]
In other words, we have \(y_{t_n,\mu_n}' (x) = \mathcal{O}(1)\), if \(x-x^{\ast}=\mathcal{O}(t_n^{\frac{1}{1-\k}})\), as \(n\to\infty\).
\end{lemma}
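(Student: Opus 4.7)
The plan is to deduce the bound on $y_{t_n,\mu_n}'(x)$ from the conformal identity \eqref{eq:identityHy}, which asserts that for $w=x+iy_{t_n,\mu_n}(x)$ on the graph of $y_{t_n,\mu_n}$ one has $\arctan y_{t_n,\mu_n}'(x) = -\arg H_{t_n,\mu_n}'(w)$. Showing $y_{t_n,\mu_n}'(x)=\mathcal O(1)$ is therefore equivalent to showing that $\arg H_{t_n,\mu_n}'(w)$ stays bounded away from $\pm\pi/2$ uniformly for $|x-x^*|=\mathcal O(t_n^{1/(1-\kappa)})$. For this it suffices to prove the upper bound $|H_{t_n,\mu_n}'(w)|\leq 2$ and a matching lower bound $\Re H_{t_n,\mu_n}'(w)\geq \eta>0$ for $n$ large.

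The upper bound is immediate from $|H_{t_n,\mu_n}'(w)|\leq 1 + t_n\int d\mu_n(s)/|w-s|^2 = 2$, using the defining relation $t_n\int d\mu_n(s)/|w-s|^2 = 1$ on the graph. For the lower bound, combining this defining relation with the identity $(w-s)^{-2} = \overline{(w-s)}^2/|w-s|^4$ gives, with $y=y_{t_n,\mu_n}(x)$,
\[\Re H_{t_n,\mu_n}'(w) = 2t_n y^2 \int \frac{d\mu_n(s)}{|w-s|^4} \geq \frac{t_n}{2y^2}\int_{|s-x|<y} d\mu_n(s),\]
after restricting to $|s-x|<y$, where $|w-s|^2\leq 2y^2$.

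The heart of the argument is then the lower bound $\int_{|s-x|<y}d\mu_n(s)\geq c\, y^{\kappa+1}$ for $n$ large. Using $\psi(s)\sim C|s-x^*|^\kappa$ together with \eqref{eq:comparemunmu} and the hypothesis $nt_n^{(1+\kappa)/(1-\kappa)}/(m_n+1)\to\infty$ (which ensures $\tilde m_n/n=o(t_n^{(\kappa+1)/(1-\kappa)})$), this reduces to bounding $\int_{|s-x|<y}\psi(s)\,ds$ from below. Splitting into the cases $|x-x^*|\leq y/2$ and $|x-x^*|>y/2$, in each case one identifies an explicit subinterval of $(x-y,x+y)$ of length $\asymp y$ on which $|s-x^*|\geq y/2$, yielding $\int_{|s-x|<y}\psi(s)\,ds \geq c' y^{\kappa+1}$; here Lemma \ref{Lemma4.3} is used to control the $y$-scale.

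Combining these estimates and invoking Lemma \ref{Lemma4.3} once more in the form $y^{1-\kappa}=\mathcal O(t_n)$, which makes $t_n y^{\kappa-1}$ bounded below, one obtains $\Re H_{t_n,\mu_n}'(w)\geq c'' t_n y^{\kappa-1} \geq \eta>0$. The bound $|\Im H_{t_n,\mu_n}'(w)/\Re H_{t_n,\mu_n}'(w)|\leq 2/\eta$ then follows from the upper bound $|H_{t_n,\mu_n}'(w)|\leq 2$, and \eqref{eq:identityHy} concludes the proof. The main obstacle is tracking the interplay of the scales $y\asymp t_n^{1/(1-\kappa)}$, $|x-x^*|=\mathcal O(t_n^{1/(1-\kappa)})$, and the rigidity error $\tilde m_n/n$, so that the lower bound on $\int_{|s-x|<y}d\mu_n(s)$ comes out precisely at the critical order $y^{\kappa+1}$ needed to make $t_n y^{\kappa-1}$ uniformly bounded below.
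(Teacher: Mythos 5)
Your proposal is correct and follows essentially the same route as the paper: reduce via \eqref{eq:identityHy} to showing $|H_{t_n,\mu_n}'(w)|\leq 2$ together with $\Re H_{t_n,\mu_n}'(w)\geq\eta>0$, obtain the lower bound from $2t_n y^2\int d\mu_n(s)/|w-s|^4$ restricted to $|s-x|<y$, and then control $\int_{|s-x|<y}d\mu_n$ from below by $cy^{\kappa+1}$ using \eqref{eq:comparemunmu}, the local behaviour $\psi(s)\sim C|s-x^*|^\kappa$, and Lemmas \ref{Lemma4.1} and \ref{Lemma4.3}. The only cosmetic difference is that the paper compares $\int_{x-y}^{x+y}d\mu$ directly to $\frac12\int_{x^*-y}^{x^*+y}d\mu$, whereas you lower-bound it by a case split on $|x-x^*|$ versus $y$; both yield the same critical order $y^{\kappa+1}$.
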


\begin{proof} If \(x-x^{\ast}=\mathcal{O}(t_n^{\frac{1}{1-\k}})\), as \(n\to\infty\), we have for positive constants \(C_j\) \((j=1,\ldots,5)\) and \(n\) large enough
\begin{align*} &\vert H'_{t_n,\mu_n} (x+iy_{t_n,\mu_n}(x)) \vert \geq \Re H'_{t_n,\mu_n} (x+iy_{t_n,\mu_n}(x)) = 2t_n y_{t_n,\mu_n}(x)^2 \int \frac{d\mu_n(s)}{\vert x-s+iy_{t_n,\mu_n}(x) \vert^4}\\
& \geq 2t_n y_{t_n,\mu_n}(x)^2 \int_{x-y_{t_n,\mu_n}(x)}^{x+y_{t_n,\mu_n}(x)}\frac{d\mu_n(s)}{\left((x-s)^2+y_{t_n,\mu_n}(x)^2\right)^2}\geq \frac{t_n}{2y_{t_n,\mu_n}(x)^2} \int_{x-y_{t_n,\mu_n}(x)}^{x+y_{t_n,\mu_n}(x)} d\mu_n(s)\\
& \geq \frac{t_n}{2y_{t_n,\mu_n}(x)^2} \left\{ \int_{x-y_{t_n,\mu_n}(x)}^{x+y_{t_n,\mu_n}(x)} d\mu(s)-\frac{C_1\tilde m_n}{n}\right\}\\
&\geq \frac{t_n}{2y_{t_n,\mu_n}(x)^2} \left\{ \frac{1}{2} \int_{x^{\ast}-y_{t_n,\mu_n}(x)}^{x^{\ast}+y_{t_n,\mu_n}(x)} d\mu(s)-\frac{C_1\tilde m_n}{n}\right\}\\
& \geq \frac{t_n}{2y_{t_n,\mu_n}(x)^2} \left\{ C_2 y_{t_n,\mu_n}(x)^{\k+1}-\frac{C_1\tilde m_n}{n}\right\}= C_3 t_n y_{t_n,\mu_n}(x)^{\k-1} -\frac{C_4 t_n \tilde m_n}{n y_{t_n,\mu_n}(x)^2} \geq C_5>0,
\end{align*}
where in the last estimates we used the Lemmas \ref{Lemma4.1} and \ref{Lemma4.3}. Moreover, we have 
\[\vert H'_{t_n,\mu_n} (x+iy_{t_n,\mu_n}(x)) \vert \leq 2\]
and
\[\vert y'_{t_n,\mu_n}(x)\vert = \vert \tan \left(-\arg H'_{t_n,\mu_n} (x+iy_{t_n,\mu_n}(x))\right)\vert. \]
From the above computation we know that \(\arg H'_{t_n,\mu_n} (x+iy_{t_n,\mu_n}(x)) \in \left[-\frac{\pi}{2}+\epsilon, \frac{\pi}{2}-\epsilon\right]\) for some small \(\epsilon >0\), so that the statement follows.
\end{proof}

\begin{lemma}\label{Lemma4.5}
We have 
\[y_{t_n,\mu}(x^{\ast})\sim \left(\frac{C\pi t_n}{\sin\left(\pi \frac{1+\k}{2}\right)}\right)^{\frac{1}{1-\k}}, \quad n\to\infty,\]
and
\[y_{t_n,\mu_n}(x^{\ast})\sim \left(\frac{C\pi t_n}{\sin\left(\pi \frac{1+\k}{2}\right)}\right)^{\frac{1}{1-\k}}, \quad n\to\infty.\]
\end{lemma}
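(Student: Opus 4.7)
The plan is to derive both asymptotics from the defining identity for $y_{t,\mu}$ on its positive region, namely
\[
\int\frac{d\mu(s)}{(x^{\ast}-s)^{2}+y^{2}}=\frac{1}{t},
\]
and the analogous identity for $\mu_n$. The first statement will follow by extracting the leading-order behavior of the left-hand side as $y\downarrow 0$ using the local power-law $\psi(s)\sim C|s-x^{\ast}|^{\k}$, and the second will follow by transferring this behavior from $\mu$ to $\mu_n$ via Lemma \ref{lemma:comparisonG} and the rigidity hypothesis.

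For the first asymptotic, I would fix a small $\delta>0$ and split
\[
\int\frac{\psi(s)\,ds}{(x^{\ast}-s)^{2}+y^{2}}=\int_{|s-x^{\ast}|\leq\delta}+\int_{|s-x^{\ast}|>\delta}.
\]
The far piece is $\mathcal O(1)$ as $y\downarrow 0$, which is negligible compared to the near piece because $\k<1$. In the near piece I would write $\psi(s)=C|s-x^{\ast}|^{\k}(1+o(1))$ uniformly for $|s-x^{\ast}|\leq\delta$ (by shrinking $\delta$ if necessary), substitute $s-x^{\ast}=yu$, and obtain
\[
Cy^{\k-1}\int_{|u|\leq\delta/y}\frac{|u|^{\k}\,du}{u^{2}+1}\,(1+o(1)).
\]
Since the truncated integral converges as $y\downarrow 0$ to the standard evaluation $\int_{-\infty}^{\infty}|u|^{\k}/(u^{2}+1)\,du=\pi/\sin(\pi(1+\k)/2)$, setting the result equal to $1/t_n$ and solving for $y$ gives $y_{t_n,\mu}(x^{\ast})^{1-\k}\sim C\pi t_n/\sin(\pi(1+\k)/2)$, which is the first claim.

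For the second asymptotic I would apply Lemma \ref{lemma:comparisonG} at $\epsilon=y_{t_n,\mu_n}(x^{\ast})$, noting that $\int d\nu(s)/((x^{\ast}-s)^{2}+\epsilon^{2})=-\epsilon^{-1}\Im G_{\nu}(x^{\ast}+i\epsilon)$ for any probability measure $\nu$. This yields
\[
\left|\int\frac{d\mu_n(s)}{(x^{\ast}-s)^{2}+\epsilon^{2}}-\int\frac{d\mu(s)}{(x^{\ast}-s)^{2}+\epsilon^{2}}\right|\leq\frac{C_{1}(m_n+1)}{n\epsilon^{2}}.
\]
Since the left-hand integral equals $1/t_n$, it remains to check that the right-hand error is $o(1/t_n)$. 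Here Lemmas \ref{Lemma4.1} and \ref{Lemma4.3} already sandwich $\epsilon=y_{t_n,\mu_n}(x^{\ast})$ between two positive multiples of $t_n^{1/(1-\k)}$, so the error is of order $(m_n+1)/(nt_n^{2/(1-\k)})$, and the hypothesis $nt_n^{(1+\k)/(1-\k)}/(m_n+1)\to\infty$ is exactly what is needed to make this $o(1/t_n)$. Then the previous analysis of $\int d\mu(s)/((x^{\ast}-s)^{2}+\epsilon^{2})$ at $\epsilon=y_{t_n,\mu_n}(x^{\ast})$ gives the same constant, yielding the second claim.

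The main obstacle is bookkeeping the two layers of $o(1)$ error: the replacement $\psi(s)\approx C|s-x^{\ast}|^{\k}$ must hold with a multiplicative error tending to zero, and the replacement of $\mu_n$ by $\mu$ must be small relative to $1/t_n$ (not merely relative to $1$). The first is straightforward from the assumption $\psi(s)\sim C|s-x^{\ast}|^{\k}$ together with dominated convergence after the rescaling $s=x^{\ast}+yu$; the delicate one is the second, where the precise powers in the rigidity hypothesis $nt_n^{(1+\k)/(1-\k)}/(m_n+1)\to\infty$ must match the scaling $\epsilon\asymp t_n^{1/(1-\k)}$ forced by Lemmas \ref{Lemma4.1} and \ref{Lemma4.3}.
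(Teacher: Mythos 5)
Your proposal is correct and follows essentially the same route as the paper: both parts start from the defining identity $\int d\nu/((x^*-s)^2+y^2)=1/t_n$, extract the leading order near $x^*$ via the power-law for $\psi$ and the integral $\int|u|^\kappa/(u^2+1)\,du=\pi/\sin(\pi(1+\kappa)/2)$, and for $\mu_n$ transfer to $\mu$ using Lemma~\ref{lemma:comparisonG} together with the lower bound from Lemma~\ref{Lemma4.1} to see the error is $o(1/t_n)$ under the rigidity hypothesis. The only cosmetic difference is that you work with a fixed small $\delta$ (handling the $\psi\sim C|s-x^*|^\kappa$ approximation by a squeeze/dominated-convergence argument), whereas the paper takes a shrinking $\delta_n$ with $t_n/\delta_n^2\to0$; both are valid.
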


\begin{proof} We have for a sequence \(\delta_n\to 0\) and \(\frac{t_n}{\delta_n^2}\to 0\) as \(n\to\infty\)
\[1=t_n \int\frac{d\mu(s)}{(s-x^{\ast})^2+y_{t_n,\mu}(x^{\ast})^2}= t_n \int_{x^{\ast}-\delta_n}^{x^{\ast}+\delta_n}\frac{d\mu(s)}{(s-x^{\ast})^2+y_{t_n,\mu}(x^{\ast})^2}+o(1),\]
as \(n\to\infty\). Moreover, we have
\begin{align*}t_n \int_{x^{\ast}-\delta_n}^{x^{\ast}+\delta_n}\frac{d\mu(s)}{(s-x^{\ast})^2+y_{t_n,\mu}(x^{\ast})^2}&\sim C t_n \int_{-\delta_n}^{\delta_n} \frac{\vert s\vert^{\k}}{s^2+y_{t_n,\mu}(x^{\ast})^2}ds\\
&= C t_n y_{t_n,\mu}(x^{\ast})^{\k-1} \int_{-\delta_n/y_{t_n,\mu}(x^{\ast})}^{\delta_n/y_{t_n,\mu}(x^{\ast})}\frac{\vert s\vert^{\k}}{s^2+1}ds.
\end{align*}
Since \(\frac{\delta_n}{y_{t_n,\mu}(x^{\ast})}\to\infty\), as \(n\to\infty\), and using the identity 
\[\int_{-\infty}^{\infty}\frac{\vert s\vert^{\k}}{s^2+1}ds=\frac{\pi}{\sin\left(\pi \frac{\k+1}{2}\right)}\]
the first part of the statement follows. For the second part we consider

\begin{align*} 1&=t_n \int\frac{d\mu_n(s)}{(s-x^{\ast})^2+y_{t_n,\mu_n}(x^{\ast})^2} = -\frac{t_n}{y_{t_n,\mu_n}(x^{\ast})}\Im G_{\mu_n}\left(x^{\ast}+i y_{t_n,\mu_n}(x^{\ast})^2\right)\\
&=t_n \int\frac{d\mu(s)}{(s-x^{\ast})^2+y_{t_n,\mu_n}(x^{\ast})^2}\\
&\qquad \qquad-\frac{t_n}{y_{t_n,\mu_n}(x^{\ast})} \left\{\Im G_{\mu_n}\left(x^{\ast}+i y_{t_n,\mu_n}(x^{\ast})\right)-\Im G_{\mu}\left(x^{\ast}+i y_{t_n,\mu_n}(x^{\ast})\right)\right\}.
\end{align*}
By Lemma \ref{lemma:comparisonG} and Lemma \ref{Lemma4.1} we have
\[\left\vert \frac{t_n}{y_{t_n,\mu_n}(x^{\ast})} \left\{\Im G_{\mu_n}\left(x^{\ast}+i y_{t_n,\mu_n}(x^{\ast})\right)-\Im G_{\mu}\left(x^{\ast}+i y_{t_n,\mu_n}(x^{\ast})\right)\right\}\right\vert = \mathcal{O}\left(\frac{\tilde m_n}{n t_n^{\frac{1+\k}{1-\k}}}\right)=o(1),\]
as \(n\to\infty\). Hence, for a sequence \(\delta_n\to 0\) and \(\frac{t_n}{\delta_n^2}\to 0\), as \(n\to\infty\), we have
\[1=t_n \int_{x^{\ast}-\delta_n}^{x^{\ast}+\delta_n} \frac{d\mu(s)}{(x^{\ast}-s)^2+y_{t_n,\mu_n}(x^{\ast})^2}+o(1),\]
as \(n\to\infty\). From here we can proceed as in the first part of the proof to obtain the second statement.
\end{proof}

For the next Lemmas we recall that by definition of the saddle points we have for real \(u,v\):

\[z_n= F_{t_n,\mu_n}\left(H_{t_n,\mu}\left(x^{\ast}+y_{t_n,\mu}(x^\ast)\right)+\frac{u}{nc_{t_n}}\right),\]
\[w_n= F_{t_n,\mu_n}\left(H_{t_n,\mu}\left(x^{\ast}+y_{t_n,\mu}(x^\ast)\right)+\frac{v}{nc_{t_n}}\right),\]
\[x_n=\Re z_n,\quad \Im z_n =y_{t_n,\mu_n}(x_n),\]
with \[c_{t_n}=\psi_{t_n}(x_{t_n}^*)=\frac{y_{t_n,\mu}(x^*)}{\pi t_n}.\]

\begin{lemma} \label{Lemma4.6}
If \(u\) and \(v\) belong to a compact subset, then there exist positive constants \(K,K'\) and \(n_0\in\mathbb{N}\) independent of \(u\) and \(v\), such that for \(n>n_0\) we have
\[\vert x_n-x^{\ast} \vert \leq \frac{K \tilde m_n}{n t_n^{\frac{\kappa}{1-\kappa}}}\]
and
\[\vert z_n-w_n \vert \leq \frac{K' \tilde m_n}{n t_n^{\frac{\kappa}{1-\kappa}}}.\]
Thus, we have \(x_n-x^{\ast}= \mathcal{O}\left(\frac{m_n+1}{n t_n^{\frac{\kappa}{1-\kappa}}}\right)\) and \(z_n-w_n= \mathcal{O}\left(\frac{m_n+1}{n t_n^{\frac{\kappa}{1-\kappa}}}\right)\), as \(n\to \infty\), uniformly with respect to \(u,v\) on compact subsets.
\end{lemma}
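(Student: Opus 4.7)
\textbf{Proof strategy for Lemma \ref{Lemma4.6}.} The plan is to compare $z_n$ with the reference point $\zeta_0:=F_{t_n,\mu}(x_{t_n}^{\ast})=x^{\ast}+iy_{t_n,\mu}(x^{\ast})$ and then extract real parts to obtain the bound on $|x_n-x^{\ast}|$; the bound on $|z_n-w_n|$ will follow from a direct Lipschitz estimate. Concretely, I would write
\[
z_n-\zeta_0=\underbrace{F_{t_n,\mu_n}\Bigl(x_{t_n}^{\ast}+\tfrac{u}{nc_{t_n}}\Bigr)-F_{t_n,\mu_n}(x_{t_n}^{\ast})}_{=:T_1}+\underbrace{F_{t_n,\mu_n}(x_{t_n}^{\ast})-F_{t_n,\mu}(x_{t_n}^{\ast})}_{=:T_2}
\]
and bound the two summands separately.

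The term $T_1$ is handled by a Lipschitz estimate on $F_{t_n,\mu_n}$. Since $F'_{t_n,\mu_n}=1/H'_{t_n,\mu_n}(F_{t_n,\mu_n}(\cdot))$ and the computation carried out in Lemma \ref{Lemma4.4} shows that $|H'_{t_n,\mu_n}|$ is bounded below by a positive constant on the graph of $y_{t_n,\mu_n}$ near $\zeta_0$, continuity together with the uniform bound $|y'_{t_n,\mu_n}|=\mathcal{O}(1)$ extends this lower bound to a small complex neighborhood of $\zeta_0$. Since $c_{t_n}=y_{t_n,\mu}(x^{\ast})/(\pi t_n)$ is of order $t_n^{\k/(1-\k)}$ by Lemma \ref{Lemma4.5}, this yields $|T_1|\leq C|u|/(nc_{t_n})=\mathcal{O}(1/(nt_n^{\k/(1-\k)}))$ uniformly for $u$ in a compact set.

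For $T_2$, I would use the identity $H_{t_n,\mu}(\zeta_0)=x_{t_n}^{\ast}$ to obtain $H_{t_n,\mu_n}(\zeta_0)-x_{t_n}^{\ast}=t_n(G_{\mu_n}(\zeta_0)-G_{\mu}(\zeta_0))$, and then estimate
\[
|H_{t_n,\mu_n}(\zeta_0)-x_{t_n}^{\ast}|\leq\frac{\pi t_n\tilde m_n}{ny_{t_n,\mu}(x^{\ast})}=\mathcal{O}\Bigl(\frac{\tilde m_n}{nt_n^{\k/(1-\k)}}\Bigr)
\]
via Lemma \ref{lemma:comparisonG} and Lemma \ref{Lemma4.5}. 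Applying $F_{t_n,\mu_n}$, which is Lipschitz on the relevant complex neighborhood by the argument used for $T_1$, and invoking $F_{t_n,\mu_n}(H_{t_n,\mu_n}(\zeta_0))=\zeta_0$, yields $|T_2|=\mathcal{O}(\tilde m_n/(nt_n^{\k/(1-\k)}))$. Since $\tilde m_n$ is bounded below by a positive constant (as $\mu_n$ is purely atomic with atoms of mass $1/n$ whereas $\mu$ is absolutely continuous, forcing $\tilde m_n\geq 1/2$), the bound on $|T_1|$ is subsumed into that of $|T_2|$; taking real parts gives the required estimate for $|x_n-x^{\ast}|$. The estimate on $|z_n-w_n|$ then follows directly from the same Lipschitz property applied to the two points $x_{t_n}^{\ast}+u/(nc_{t_n})$ and $x_{t_n}^{\ast}+v/(nc_{t_n})$, giving $|z_n-w_n|=\mathcal{O}(|u-v|/(nc_{t_n}))=\mathcal{O}(1/(nt_n^{\k/(1-\k)}))$, again absorbed into the claimed $\tilde m_n$-form.

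The main obstacle is the extension of the lower bound $|H'_{t_n,\mu_n}|\geq C_5>0$ from the graph of $y_{t_n,\mu_n}$, where it is established in Lemma \ref{Lemma4.4}, to a complex neighborhood of $\zeta_0$ containing $H_{t_n,\mu_n}(\zeta_0)$; this requires checking that for $n$ large the relevant straight segments stay inside the domain of holomorphy of $F_{t_n,\mu_n}$, which follows from the conformality of $H_{t_n,\mu_n}$ on $\Omega_{t_n,\mu_n}$ (and its analytic continuation across the real axis away from the support of $\mu_n$), combined with the uniform control on $y_{t_n,\mu_n}$ and its derivative from the earlier auxiliary lemmas.
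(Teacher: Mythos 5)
Your approach is correct in spirit but takes a genuinely different route from the paper. You compare $z_n$ directly with $\zeta_0=F_{t_n,\mu}(x_{t_n}^*)$ by a decomposition $T_1+T_2$ and then transfer back through $F_{t_n,\mu_n}$ viewed as a holomorphic Lipschitz map on a \emph{two-dimensional} complex neighborhood of the real point $x_{t_n}^*$. The paper avoids the complex extension entirely: it compares the two real values $H_{t_n,\mu_n}(x^*+iy_{t_n,\mu_n}(x^*))$ and $H_{t_n,\mu}(x^*+iy_{t_n,\mu}(x^*))+\tfrac{u}{nc_{t_n}}$ (step (ii), after controlling $|y_{t_n,\mu}(x^*)^2-y_{t_n,\mu_n}(x^*)^2|$ in step (i)), and then uses the fact that $x\mapsto H_{t_n,\mu_n}(x+iy_{t_n,\mu_n}(x))$ is a \emph{real} increasing function with derivative bounded below by $c_3>0$ on the relevant range (step (iii)), so that the preimage of a small interval is small (step (iv)). Working on the one-dimensional graph of $y_{t_n,\mu_n}$ means the paper only needs the lower bound $\Re H_{t_n,\mu_n}'\geq c$ \emph{on that arc}, which comes directly from Lemma \ref{Lemma4.4}.

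The step you flag as the main obstacle really is one: extending $|H_{t_n,\mu_n}'|\geq c$ from the arc to a complex disc of radius $\asymp \tilde m_n/(nt_n^{\k/(1-\k)})$ around $x^*+iy_{t_n,\mu_n}(x^*)$ does not follow merely from ``conformality on $\Omega_{t_n,\mu_n}$'' — since the relevant disc dips \emph{below} the graph of $y_{t_n,\mu_n}$, one needs a quantitative modulus of continuity of $H_{t_n,\mu_n}'$ that is uniform in $n$. This can be obtained by bounding $H_{t_n,\mu_n}''=t_nG_{\mu_n}''$ on the disc: using $\Im z\gtrsim t_n^{1/(1-\k)}$ (Lemma \ref{Lemma4.1}) and the standard bound $\int|z-s|^{-2}d\mu_n(s)\leq(\hat c t_n)^{-1}$ that the paper uses elsewhere, one gets $|H_{t_n,\mu_n}''(z)|=\mathcal O(t_n^{-1/(1-\k)})$, and since the disc has radius $o(t_n^{1/(1-\k)})$ (by the hypothesis $\frac{nt_n^{(1+\k)/(1-\k)}}{m_n+1}\to\infty$) the perturbation of $H_{t_n,\mu_n}'$ is $o(1)$. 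So the gap is fillable, but it is a genuine extra estimate that must be written out, not merely a routine continuity appeal. Similarly, the identity $F_{t_n,\mu_n}(H_{t_n,\mu_n}(\zeta_0))=\zeta_0$ requires that the analytic continuation of $F_{t_n,\mu_n}$ across the real axis is the one containing $\zeta_0$ in its image; this is where one uses that $\zeta_0$ is within distance $o(t_n^{1/(1-\k)})$ of the graph, which your argument implicitly needs but should state. With these points carried out, your route is valid and, once the second-derivative bound is in hand, arguably a bit more streamlined than the paper's four-step decomposition; the paper's way, however, requires no estimate on $H_{t_n,\mu_n}''$ at all.
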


\begin{proof} 
We split the proof in four parts.
\begin{itemize}
\item[(i)] We first show 
\[\vert y_{t_n,\mu}(x^{\ast})^2 - y_{t_n,\mu_n}(x^{\ast})^2 \vert = \mathcal{O}\left(\frac{\tilde m_n t_n}{n}\right),\quad n\to \infty.\]
To see that, we observe that we have as \(n\to \infty\) (see the proof of Lemma \ref{Lemma4.5})
\[\frac{1}{t_n}= \int \frac{d\mu (s)}{(x^{\ast}-s)^2+y_{t_n,\mu}(x^{\ast})^2}= \int \frac{d\mu (s)}{(x^{\ast}-s)^2+y_{t_n,\mu_n}(x^{\ast})^2}+\mathcal{O}\left(\frac{\tilde m_n}{n t_n^{\frac{2}{1-\k}}}\right), \quad n\to \infty.\]
This gives
\begin{align*}&\int \frac{1}{(x^{\ast}-s)^2+y_{t_n,\mu}(x^{\ast})^2} -\frac{1}{(x^{\ast}-s)^2+y_{t_n,\mu_n}(x^{\ast})^2} d\mu (s)\\
= &\vert y_{t_n,\mu}(x^{\ast})^2 - y_{t_n,\mu_n}(x^{\ast})^2 \vert \int \frac{d\mu (s)}{\left((x^{\ast}-s)^2+y_{t_n,\mu}(x^{\ast})^2\right)\left((x^{\ast}-s)^2+y_{t_n,\mu_n}(x^{\ast})^2\right)}\\
=& \mathcal{O}\left(\frac{\tilde m_n}{n t_n^{\frac{2}{1-\k}}}\right), \quad n\to \infty.
\end{align*}
Using Lemma \ref{Lemma4.5}, by an elementary estimation we have for \(n\) large enough and a positive constant \(c_1\)
\[\int \frac{d\mu (s)}{\left((x^{\ast}-s)^2+y_{t_n,\mu}(x^{\ast})^2\right)\left((x^{\ast}-s)^2+y_{t_n,\mu_n}(x^{\ast})^2\right)} \geq c_1 \frac{1}{t_n t_n^{\frac{2}{1-\k}}},\]
which gives 
\[\vert y_{t_n,\mu}(x^{\ast})^2 - y_{t_n,\mu_n}(x^{\ast})^2 \vert = \mathcal{O}\left(\frac{\tilde m_n t_n}{n}\right),\quad n\to \infty.\]

\item[(ii)] Next we show 
\[H_{t_n,\mu}(x^{\ast}+i y_{t_n,\mu}(x^{\ast}))+ \frac{u}{nc_{t_n}} - H_{t_n,\mu_n}(x^{\ast}+i y_{t_n,\mu_n}(x^{\ast})) =\mathcal{O}\left(\frac{\tilde m_n}{nt_n^{\frac{\kappa}{1-\kappa}}}\right),\quad n\to \infty. \]
To this end, we write
\begin{align*} &H_{t_n,\mu}(x^{\ast}+i y_{t_n,\mu}(x^{\ast}))+ \frac{u}{nc_{t_n}} - H_{t_n,\mu_n}(x^{\ast}+i y_{t_n,\mu_n}(x^{\ast})) \\
&= \frac{u}{nc_{t_n}} + t_n \Re \left\{G_{\mu} (x^{\ast}+y_{t_n,\mu}(x^{\ast}))-G_{\mu_n} (x^{\ast}+y_{t_n,\mu_n}(x^{\ast}))\right\}\\
&= \frac{u}{nc_{t_n}} + t_n\Re \left\{G_{\mu} (x^{\ast}+y_{t_n,\mu_n}(x^{\ast}))-G_{\mu_n} (x^{\ast}+y_{t_n,\mu_n}(x^{\ast}))\right\}\\
&\quad + t_n\Re \left\{G_{\mu} (x^{\ast}+y_{t_n,\mu}(x^{\ast}))-G_{\mu} (x^{\ast}+y_{t_n,\mu_n}(x^{\ast}))\right\}.
\end{align*}
Now, by Lemmas \ref{lemma:comparisonG} and \ref{Lemma4.5} we have 
\[t_n\Re \left\{G_{\mu} (x^{\ast}+y_{t_n,\mu_n}(x^{\ast}))-G_{\mu_n} (x^{\ast}+y_{t_n,\mu_n}(x^{\ast}))\right\}=\mathcal{O}\left(\frac{\tilde m_n}{nt_n^{\frac{\kappa}{1-\kappa}}}\right), \quad n\to \infty.\]
Moreover, we obtain
\begin{align*}&t_n\Re \left\{G_{\mu} (x^{\ast}+y_{t_n,\mu}(x^{\ast}))-G_{\mu} (x^{\ast}+y_{t_n,\mu_n}(x^{\ast}))\right\}\\
&= t_n\left(y_{t_n,\mu}(x^{\ast})^2 - y_{t_n,\mu_n}(x^{\ast})^2\right) \int \frac{(x^{\ast}-s) d\mu (s)}{\left((x^{\ast}-s)^2+y_{t_n,\mu}(x^{\ast})^2\right)\left((x^{\ast}-s)^2+y_{t_n,\mu_n}(x^{\ast})^2\right)},
\end{align*}
and the last integral can be seen using straightforward estimates to be $\mathcal O\left(\frac{1}{t_n t_n^{\frac{1}{1-\kappa}}}\right)$, so that it tends to $0$ as $n\to\infty$. Thus, using (i) we obtain (ii).

\item[(iii)] By the proof of Lemma \ref{Lemma4.4} we have for some constant \(c_2>0\)
\[\Re H_{t_n,\mu_n}'(x+iy_{t_n,\mu_n}(x)) \geq c_2,\quad \quad \vert \Im H_{t_n,\mu_n}'(x+iy_{t_n,\mu_n}(x)) \vert \leq 2,\]
for \(x-x^{\ast}= \mathcal{O}(t_n^{\frac{1}{1-\k}}),\) as \(n \to \infty\). This gives for some constant \(c_3>0\)
\[\Re \left\{\frac{d}{dx} H_{t_n,\mu_n}(x+iy_{t_n,\mu_n}(x)) \right\} \geq c_3 \]
for \(n\) large enough, if \(x-x^{\ast}= \mathcal{O}(t_n^{\frac{1}{1-\k}}),\) as \(n \to \infty\).

\item[(iv)] Now, we choose the constant \(K>0\) such that for all \(u\) in a given compact set we have 

\[\vert H_{t_n,\mu}(x^{\ast}+i y_{t_n,\mu}(x^{\ast}))+ \frac{u}{nc_{t_n}} - H_{t_n,\mu_n}(x^{\ast}+i y_{t_n,\mu_n}(x^{\ast})) \vert \leq \frac{K \tilde m_n}{n t_n^{\frac{\kappa}{1-\kappa}}}.\]

If we define \(\delta_n = \frac{K \tilde m_n+1}{c_3 n t_n^{\frac{\kappa}{1-\kappa}}}\), then using (iii) we know that by \(x\mapsto H_{t_n,\mu_n}(x+i y_{t_n,\mu_n}(x))\) the interval \((x^{\ast}-\delta_n , x^{\ast}+\delta_n)\) is mapped bijectively onto an interval containing 
\[ \left[H_{t_n,\mu_n}(x^{\ast}+i y_{t_n,\mu_n}(x^{\ast}))- \frac{K \tilde m_n}{nt_n^{\frac{\kappa}{1-\kappa}}}, H_{t_n,\mu_n}(x^{\ast}+i y_{t_n,\mu_n}(x^{\ast}))+ \frac{K \tilde m_n}{nt_n^{\frac{\kappa}{1-\kappa}}} \right].\]
As this interval contains the point \(H_{t_n,\mu}(x^{\ast}+i y_{t_n,\mu}(x^{\ast}))+ \frac{u}{nc_{t_n}}\), we can conclude that for its preimage we have
\[\vert x_n - x^{\ast}\vert \leq \delta_n = \mathcal{O}\left(\frac{\tilde m_n}{nc_{t_n}}\right),\]
as \(n\to\infty\) uniformly with respect to \(u\).
From this and Lemma \ref{Lemma4.4} we additionally obtain
\[\vert y_{t_n,\mu_n}(x_n)-y_{t_n,\mu_n}(x^{\ast})\vert =\mathcal{O}\left(\frac{\tilde m_n}{nc_{t_n}}\right), \quad n\to\infty,\]
which gives us
\[z_n - (x^{\ast}+y_{t_n\mu_n}(x^{\ast}))= \mathcal{O}\left(\frac{\tilde m_n}{nc_{t_n}}\right), \quad n\to\infty.\]
From this it easily follows that
\[z_n-w_n=\mathcal{O}\left(\frac{\tilde m_n}{nc_{t_n}}\right)=\mathcal{O}\left(\frac{\tilde m_n}{nt_n^{\frac{\kappa}{1-\kappa}}}\right), \quad n\to\infty,\]
uniformly with respect to \(u\) and \(v\) in compact subsets.
\end{itemize}
\end{proof}

\begin{lemma}\label{Lemma4.7} We have uniformly in \(u\) on compact subsets
\[y_{t_n,\mu_n}(x_n)\sim \left(\frac{C\pi t_n}{\sin\left(\pi \frac{1+\k}{2}\right)}\right)^{\frac{1}{1-\k}}, \quad n\to\infty.\]
\end{lemma}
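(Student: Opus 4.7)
The plan is to combine Lemma \ref{Lemma4.5} (which gives the asymptotic equivalence at the fixed point $x^{\ast}$) with Lemma \ref{Lemma4.4} (boundedness of the derivative $y_{t_n,\mu_n}'$ on a neighborhood of size $\mathcal O(t_n^{1/(1-\kappa)})$) and Lemma \ref{Lemma4.6} (control on $x_n - x^{\ast}$) via a one-term mean-value argument. Concretely, it suffices to prove that
\[
y_{t_n,\mu_n}(x_n)-y_{t_n,\mu_n}(x^{\ast}) = o\!\left(t_n^{\frac{1}{1-\kappa}}\right), \qquad n\to\infty,
\]
uniformly for $u$ in a compact set, because Lemma \ref{Lemma4.5} already identifies the leading order of $y_{t_n,\mu_n}(x^{\ast})$ as $\bigl(C\pi t_n/\sin(\pi(1+\kappa)/2)\bigr)^{1/(1-\kappa)}$.

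First I would use Lemma \ref{Lemma4.6} to write
\[
|x_n-x^{\ast}| = \mathcal O\!\left(\frac{m_n+1}{n\,t_n^{\kappa/(1-\kappa)}}\right)
= t_n^{\frac{1}{1-\kappa}}\cdot \mathcal O\!\left(\frac{m_n+1}{n\,t_n^{(1+\kappa)/(1-\kappa)}}\right)
= o\!\left(t_n^{\frac{1}{1-\kappa}}\right),
\]
using the standing hypothesis $n t_n^{(1+\kappa)/(1-\kappa)}/(m_n+1)\to\infty$. In particular $x_n$ lies in an interval of the form $[x^{\ast}-K t_n^{1/(1-\kappa)},\, x^{\ast}+K t_n^{1/(1-\kappa)}]$ for some $K>0$ and all large $n$, uniformly in $u$ on the compact set.

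Next, Lemma \ref{Lemma4.4} shows that $y_{t_n,\mu_n}'$ is uniformly bounded by some constant $M>0$ on precisely that interval. Since $y_{t_n,\mu_n}$ is continuously differentiable on the interval (being the implicit solution of a smooth equation in the region where it is strictly positive, which we ensured via Lemma \ref{Lemma4.1}), the mean value theorem gives
\[
\bigl|y_{t_n,\mu_n}(x_n)-y_{t_n,\mu_n}(x^{\ast})\bigr| \leq M\,|x_n-x^{\ast}| = o\!\left(t_n^{\frac{1}{1-\kappa}}\right),
\]
uniformly in $u$. Combining this with Lemma \ref{Lemma4.5} yields the claimed asymptotic equivalence for $y_{t_n,\mu_n}(x_n)$.

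The main (and only real) obstacle is the bookkeeping of the smallness conditions: one must verify that the bound $|x_n-x^{\ast}|=\mathcal O((m_n+1)/(n t_n^{\kappa/(1-\kappa)}))$ from Lemma \ref{Lemma4.6} is strictly smaller than $t_n^{1/(1-\kappa)}$, and this is exactly where the condition $n t_n^{(1+\kappa)/(1-\kappa)}/(m_n+1)\to\infty$ (which is one of the standing hypotheses of Theorem \ref{thrm_v}) enters in a sharp way. Once this reduction is clean, uniformity in $u$ on compacts follows because all of the preceding lemmas were established uniformly.
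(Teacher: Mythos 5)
Your proposal is correct and follows essentially the same route as the paper: control $|x_n-x^{\ast}|$ via Lemma \ref{Lemma4.6} and the hypothesis $n t_n^{(1+\kappa)/(1-\kappa)}/(m_n+1)\to\infty$, transfer this to $|y_{t_n,\mu_n}(x_n)-y_{t_n,\mu_n}(x^{\ast})|$ using the derivative bound of Lemma \ref{Lemma4.4}, and conclude with the leading-order asymptotics of $y_{t_n,\mu_n}(x^{\ast})$ from Lemma \ref{Lemma4.5}. The only cosmetic difference is that the paper packages the mean-value step inside item (iv) of the proof of Lemma \ref{Lemma4.6} and then normalizes by $y_{t_n,\mu_n}(x^{\ast})$, whereas you redo the mean-value argument explicitly; the underlying chain of estimates is identical.
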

\begin{proof} From (iv) in the proof of Lemma \ref{Lemma4.6} we know that
\[\vert y_{t_n,\mu_n}^2(x_n)-y_{t_n,\mu_n}^2(x^{\ast})\vert =\mathcal{O}\left(\frac{\tilde m_n t_n}{n}\right), \quad n\to\infty.\]
Using Lemma \ref{Lemma4.5} and recalling that we have \(c_{t_n}=\frac{y_{t_n,\mu}(x^{\ast})}{\pi t_n}\), we obtain
\[\vert y_{t_n,\mu_n}(x_n)-y_{t_n,\mu_n}(x^{\ast})\vert =\mathcal{O}\left(\frac{\tilde m_n t_n}{n t_{n}^{\frac{1}{1-\k}}}\right), \quad n\to\infty.\]
Dividing this by \(y_{t_n,\mu_n}(x^{\ast})\) und using Lemma \ref{Lemma4.5} we obtain
\[\frac{y_{t_n,\mu_n}(x_n)}{y_{t_n,\mu_n}(x^{\ast})} = 1+ o(1),\]
as \(n\to\infty\), uniformly in \(u\) on compact subsets, from which the statement follows.

\end{proof}

\begin{lemma}\label{Lemma4.8} We have uniformly with respect to \(u\) and \(v\) on compact subsets
\[\lim_{n\to\infty} \phi_n(z_n,u) - \phi_n(w_n,v) -\frac{v-u}{t_n c_{t_n}} \left(z_n - x_{t_n}^{\ast}\right) = 0.\]
\end{lemma}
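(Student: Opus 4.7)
\medskip

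\noindent\textbf{Proof plan.} The plan is to separate the dependence on $u$ and $v$ by writing
\begin{align*}
\phi_n(z_n,u) - \phi_n(w_n,v) = \bigl[\phi_n(z_n,u) - \phi_n(w_n,u)\bigr] + \bigl[\phi_n(w_n,u) - \phi_n(w_n,v)\bigr].
\end{align*}
Since $\phi_n(z,u)$ is a quadratic polynomial in $u$ (with $z$-dependent coefficients), the second bracket can be computed algebraically as
\begin{align*}
\phi_n(w_n,u) - \phi_n(w_n,v) = \frac{v-u}{t_n c_{t_n}}\bigl(w_n - x_{t_n}^{\ast}\bigr) + \frac{u^2-v^2}{2n t_n c_{t_n}^2},
\end{align*}
and writing $w_n - x_{t_n}^{\ast} = (z_n - x_{t_n}^{\ast}) - (z_n - w_n)$ produces the desired linear expression $\frac{v-u}{t_n c_{t_n}}(z_n-x_{t_n}^{\ast})$ together with a correction of order $|z_n-w_n|/(t_n c_{t_n})$ and a residue of order $1/(n t_n c_{t_n}^2)$.

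For the first bracket I would exploit that $z_n$ is a saddle point of $\phi_n(\cdot,u)$, so $\phi_n'(z_n,u)=0$, and apply Taylor's formula with integral remainder along a convenient path from $z_n$ to $w_n$, yielding
\begin{align*}
\phi_n(z_n,u) - \phi_n(w_n,u) = -\int_{z_n}^{w_n}(w_n-s)\,\phi_n''(s,u)\,ds.
\end{align*}
I would choose the path to follow the graph of $y_{t_n,\mu_n}$, which by Lemma \ref{Lemma4.4} has bounded slope near $x^{\ast}$, so its arclength between $z_n$ and $w_n$ is $\mathcal{O}(|z_n-w_n|)$. On this graph one has $\phi_n''(s,u)=\frac{n}{t_n}H_{t_n,\mu_n}'(s)$ with $|H_{t_n,\mu_n}'(s)| \leq 2$, because the defining identity $\int |s-r|^{-2}\,d\mu_n(r)=1/t_n$ forces $|G_{\mu_n}'(s)| \leq 1/t_n$. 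The first bracket is therefore controlled by $\mathcal{O}(n|z_n-w_n|^2/t_n)$.

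The remaining ingredient is a uniform estimate $|z_n-w_n|=\mathcal{O}(1/(nc_{t_n}))$, which is sharper than the bound in Lemma \ref{Lemma4.6} and holds uniformly for $u,v$ in compacts. It follows from the mean value theorem applied to $F_{t_n,\mu_n}$ on the real segment $[x_{t_n}^{\ast}+v/(nc_{t_n}),\,x_{t_n}^{\ast}+u/(nc_{t_n})]$, combined with the uniform lower bound $|H_{t_n,\mu_n}'(x+iy_{t_n,\mu_n}(x))| \geq c_5>0$ established in the proof of Lemma \ref{Lemma4.4}. Using Lemma \ref{Lemma4.5} to write $c_{t_n}^{2} \sim \text{const}\cdot t_n^{2\kappa/(1-\kappa)}$, all three error contributions above reduce to $\mathcal{O}(1/(n t_n c_{t_n}^{2})) = \mathcal{O}(1/(n t_n^{(1+\kappa)/(1-\kappa)}))$, which tends to zero by the standing hypothesis $n t_n^{(1+\kappa)/(1-\kappa)}/(m_n+1)\to\infty$. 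The main delicacy I anticipate is justifying that $|\phi_n''(s,u)|=\mathcal{O}(n/t_n)$ along the chosen integration path: this relies on keeping the path exactly on the graph of $y_{t_n,\mu_n}$, so that the characterizing integral identity of $y_{t_n,\mu_n}$ is available, rather than integrating along the straight segment $[w_n, z_n]$, where the required bound on $G_{\mu_n}'$ would be more subtle.
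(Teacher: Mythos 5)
Your proof is correct and follows essentially the same strategy as the paper: isolate the dependence on $u,v$ by an algebraic identity (using that $\phi_n(z,\cdot)$ is quadratic), then control the dependence on the integration variable by an integral estimate along the graph of $y_{t_n,\mu_n}$, where the saddle-point identity $\int|s-r|^{-2}d\mu_n(r)=1/t_n$ yields $|H_{t_n,\mu_n}'|\le 2$. The only structural difference is the order of the two-step decomposition: the paper writes $\phi_n(z_n,u)-\phi_n(z_n,v)$ first (varying $u$ at fixed $z_n$), which directly produces the target term $\frac{v-u}{t_n c_{t_n}}(z_n-x_{t_n}^\ast)$, then treats $\phi_n(z_n,v)-\phi_n(w_n,v)=\int_{w_n}^{z_n}\phi_n'(s,v)\,ds$ using that $w_n$ is the saddle of $\phi_n(\cdot,v)$; you vary $z$ first and then $u$, which introduces the extra cross term $\frac{v-u}{t_n c_{t_n}}(z_n-w_n)$ that you then have to absorb. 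Both routes also differ cosmetically in how the integral remainder is written (your Taylor form $-\int_{z_n}^{w_n}(w_n-s)\phi_n''(s,u)\,ds$ versus the paper's double integral of $H_{t_n,\mu_n}'$), but they are equivalent and lead to the same bound $\mathcal{O}(n|z_n-w_n|^2/t_n)$.

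One thing you do better: you explicitly state and justify the sharper estimate $|z_n-w_n|=\mathcal{O}(1/(nc_{t_n}))$, uniformly in $u,v$ on compacts, via the lower bound on $|H_{t_n,\mu_n}'|$ along the graph established in the proof of Lemma \ref{Lemma4.4}. The paper's Lemma \ref{Lemma4.6} only records the weaker $|z_n-w_n|=\mathcal{O}(\tilde m_n/(nc_{t_n}))$, yet the estimate stated in the paper's proof of Lemma \ref{Lemma4.8}, namely that the remainder is $\mathcal{O}(1/(nt_nc_{t_n}^2))$, tacitly uses the sharper version (otherwise one would pick up a factor $\tilde m_n^2$, which is not controlled under the standing hypothesis $nt_n^{(1+\kappa)/(1-\kappa)}/(m_n+1)\to\infty$). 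Your version of the $z_n-w_n$ bound is exactly what is needed and follows from the argument already present in part (iii)--(iv) of the proof of Lemma \ref{Lemma4.6}, so writing it out makes the argument cleaner. The remaining caveats you raise (path on the graph rather than a straight segment, boundedness of the slope of $y_{t_n,\mu_n}$) are indeed the right technical points and are handled as you indicate.
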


\begin{proof}
We first recall that by definition the phase function is given by
\[\phi_n(z,u)=\frac{n}{2t_n}\left[\left(z-x_{t_n}^{\ast}-\frac{u}{nc_{t_n}}\right)^2 +2t_n \int \log (z-s) d\mu_n(s)\right].\]
Hence, we obtain

\begin{align*} & \phi_n(z_n,u) - \phi_n(w_n,v) =  \phi_n(z_n,u) - \phi_n(z_n,v) +  \phi_n(z_n,v) - \phi_n(w_n,v)\\
&=\frac{n}{2t_n}\left[\left(z_n-x_{t_n}^{\ast}-\frac{u}{nc_{t_n}}\right)^2-\left(z_n-x_{t_n}^{\ast}-\frac{v}{nc_{t_n}}\right)^2\right]+\int_{w_n}^{z_n}\phi_n'(s,v) ds\\
&=\frac{v-u}{t_n c_{t_n}} \left(z_n - x_{t_n}^{\ast}\right) - \frac{(v-u)(u+v)}{2t_nc_{t_n}^2 n}+\frac{n}{t_n}\int_{w_n}^{z_n}\left(H_{t_n,\mu_n}(s)-H_{t_n,\mu_n}(w_n)\right) ds,
\end{align*}
where the integral is performed over the part of the graph of \(y_{t_n,\mu_n}\) from \(w_n\) to \(z_n\).
Now we have 
\[\frac{(v-u)(u+v)}{2t_nc_{t_n}^2 n} \to 0,\quad n\to\infty,\]
uniformly in \(u\) and \(v\), and 

\begin{align*} &\left\vert \frac{n}{t_n}\int_{w_n}^{z_n}\left(H_{t_n,\mu_n}(s)-H_{t_n,\mu_n}(w_n)\right) ds \right\vert= \left\vert\frac{n}{t_n}\int_{w_n}^{z_n}\int_{w_n}^s H_{t_n,\mu_n}'(x) dx ds\right\vert,
\end{align*}
which by Lemma \ref{Lemma4.4} can be bounded above, so that we obtain
\[\left\vert \frac{n}{t_n}\int_{w_n}^{z_n}\left(H_{t_n,\mu_n}(s)-H_{t_n,\mu_n}(w_n)\right) ds \right\vert = \mathcal{O}\left(\frac{n}{t_n} \left(z_n - w_n\right)^2\max \vert H_{t_n,\mu_n}'(z)\vert\right),\]
as \(n\to\infty\), where the maximum is taken over the part of the graph of \(y_{t_n,\mu_n}\) from \(w_n\) to \(z_n\). By the boundedness of \(H_{t_n,\mu_n}'(z)\) and using Lemma \ref{Lemma4.7} this can be bounded further, and obtain
\[\left\vert \frac{n}{t_n}\int_{w_n}^{z_n}\left(H_{t_n,\mu_n}(s)-H_{t_n,\mu_n}(w_n)\right) ds \right\vert =\mathcal{O}\left(\frac{1}{t_n c_{t_n}^2 n}\right)= o(1),\quad n\to\infty, \]
uniformly in \(u\) and \(v\) on compact sets, from which the statement follows.
\end{proof}

\subsection{Asymptotics of \texorpdfstring{$A_n$}{An}}

In order to prove that
\[\lim_{n\to\infty}A_{n}(u,v) = \frac{\sin\left(\pi (u-v)\right)}{\pi (u-v)}\]
uniformly with respect to \(u\) and \(v\) on compact subsets, we have to prove
\[\lim_{n\to\infty}\frac{s}{t_n c_{t_n}}= \pi,\]
with \(s=y_{t_n,\mu_n}(x_n)\). However, recalling that \(c_{t_n} = \frac{y_{t_n,\mu}(x^{\ast})}{\pi t_n}\), this immediately follows by a combination of Lemma \ref{Lemma4.5} and Lemma \ref{Lemma4.7}.

\subsection{Estimation of \texorpdfstring{$I_n$}{In}}

Using the notations and assumptions in the statement of Theorem 1.3 we recall that the integral \(I_n\) is given by

\[I_n(u,v)= \frac{1}{t_n c_{t_n}}e^{-\frac{u^2-v^2}{2nt_n c_{t_n}^2}+\frac{u-v}{t_n c_{t_n}}\left(x_n-x_{t_n}^{\ast}\right)} \frac{1}{(2\pi i)^2} \int_{x_n-i\infty}^{x_n+i\infty} dz \int_{\gamma_n} dw \frac{e^{\phi_n(z,u)-\phi_n(w,v)}}{z-w}.\]

As done previously, we split up the integral into seven parts, of which we explicitly have to deal with 
\[\tilde{I}_n^{(1)}=\int_{z_n+iL_n}^{z_n+i\infty} dz \int_{\gamma_n} dw \frac{e^{\phi_n(z,u)-\phi_n(w,v)}}{z-w},\]
\[\tilde{I}_n^{(4)}=\int_{z_n-L_n}^{z_n+iL_n} dz \int_{\gamma_n\cap U_{L_n}(w_n)} dw \frac{e^{\phi_n(z,u)-\phi_n(w,v)}}{z-w},\]
\[\tilde{I}_n^{(5)}=\int_{z_n-L_n}^{z_n+iL_n} dz \int_{\gamma_n \backslash U_{L_n}(w_n)} dw \frac{e^{\phi_n(z,u)-\phi_n(w,v)}}{z-w},\]
where \(L_n\) is defined by
\[L_n = \sqrt{\frac{t_n}{n}} \left(\log n\right)^{\frac{1+\rho}{2}}\]
for some \(\rho >0\). Using the assumption \(\frac{n t_n^{\frac{1+\k}{1-\k}}}{\left(\log n\right)^{1+\rho}} \to \infty\), as \(n\to\infty\), we have
\[\lim_{n\to\infty} \frac{L_n}{t_n c_{t_n}}=\lim_{n\to\infty} \frac{L_n}{t_n^{\frac{1}{1-\k}}}=0.\]
Moreover, we observe (see the proof of Lemma \ref{Lemma4.4}) that the sequence
\[\frac{t_n}{n}\beta_n = \frac{t_n}{n}\Re{\phi_n''(z_n,u)}= 2t_ny_{t_n,\mu_n}(x_n)^2 \int \frac{d\mu_n (s)}{\vert z_n-s\vert^4}\]
is bounded in absolute value and stays away from zero as \(n\to\infty\).
Following the strategy of the proofs of Theorem 1.1 and Theorem 1.2 we will have to show
\[\lim_{n\to\infty} \frac{1}{t_n c_{t_n}}e^{\frac{u-v}{t_n c_{t_n}}\left(x_n-x_{t_n}^{\ast}\right)} \tilde{I}_n^{(j)} =0,\quad j=1,4,5,\]
locally uniformly in \(u\) and \(v\).

\paragraph{\bf Estimation of $\tilde I_n^{(1)}$.} 

First we observe that by Lemma \ref{Lemma4.4} we have for some constant \(K_1 >0\)
\[d_n^{-1}= \frac{1}{\min \vert z-w\vert} \leq \frac{K_1}{L_n},\]
where the minimum is taken on the contour \([z_n+iL_n,z_n+i\infty]\times \gamma_n\).
This gives
\begin{align*} &\left\vert \frac{1}{t_n c_{t_n}}e^{\frac{u-v}{t_n c_{t_n}}\left(x_n-x_{t_n}^{\ast}\right)} \tilde{I}_n^{(1)}\right\vert\\
&\leq \frac{K_1}{t_nc_{t_n}L_n} L(\gamma_n) e^{\frac{u-v}{t_n c_{t_n}}\left(x_n-x_{t_n}^{\ast}\right)} \int_{z_n+iL_n}^{z_n+i\infty} e^{\Re \phi_n(z,u)}\vert dz\vert \int_{\gamma_n} e^{-\Re \phi_n(w,v)}\vert dw\vert.
\end{align*}
The length of \(\gamma_n\) can be estimated by Lemma \ref{lemma:length} and we obtain further, using Lemma \ref{Lemma4.8}, for some constant \(K_2>0\)

\begin{align*} &\left\vert \frac{1}{t_n c_{t_n}}e^{\frac{u-v}{t_n c_{t_n}}\left(x_n-x_{t_n}^{\ast}\right)} \tilde{I}_n^{(1)}\right\vert\\
&\leq \frac{K_2}{t_nc_{t_n}L_n} \sqrt{t_n}n^3 e^{\frac{u-v}{t_n c_{t_n}}\left(x_n-x_{t_n}^{\ast}\right)+\Re\left\{\phi_n(z_n,u)-\phi_n(w_n,v)\right\}} \int_{z_n+iL_n}^{z_n+i\infty} e^{\Re \left\{\phi_n(z,u)-\phi_n(z_n,u)\right\}}\vert dz\vert\\
&\leq \frac{K_2}{t_nc_{t_n}L_n} \sqrt{t_n}n^3 e^{\frac{1}{2} \Re\left\{\phi_n(z_n+iL_n,u)-\phi_n(z_n,u)\right\}}\int_{z_n+iL_n}^{z_n+i\infty} e^{\frac{1}{2}\Re \left\{\phi_n(z,u)-\phi_n(z_n,u)\right\}}\vert dz\vert.
\end{align*}

It follows from elementary considerations that the integral 
\[\int_{z_n+iL_n}^{z_n+i\infty} e^{\frac{1}{2}\Re \left\{\phi_n(z,u)-\phi_n(z_n,u)\right\}}\vert dz\vert\]
remains bounded as \(n\to\infty\), so that it is sufficient to show 
\[\Re\left\{\phi_n(z_n+iL_n,u)-\phi_n(z_n,u)\right\} \leq -K_3 \left(\log n\right)^{1+\rho}\]
for some constant \(K_3>0\) and \(n\) large enough. A complex Taylor expansion for \(\phi_n(z,u)\) around \(z_n\) yields
\[\phi_n(z,u)=\phi_n(z_n,u)+\frac{1}{2}\phi_n^{''}(z_n,u)(z-z_n)^2+R_2(z).\]
We will estimate \(R_2(z)\) for \(\vert z-z_n \vert\leq L_n\) in a similar way than in the estimation of $\tilde I_n^{(1)}$ in the proof of Theorem 1.2. To this end, we proceed as in \eqref{eq:estimatephi3a} to obtain
\[|R_2(z)|\leq \max|\phi_n'''(\xi;u)|  L_n^3\]
where the maximum is over the line segment between $z_n$ and $z$.  Given $\xi$ on this line segment, we let $\gamma$ be the circle of radius $\frac{1}{2}\Im\xi\geq \frac{1}{2}(\Im z_n - L_n)$ around $\xi$, and we again use Cauchy's theorem to estimate the maximum,
\[|\phi_n'''(\xi;u)|=\left|\frac{1}{2\pi}\int_{\gamma} \frac{\phi_n''(\zeta;u)}{(\zeta-\xi)^2}d\xi\right|\leq \frac{2}{\Im\xi} \max_{\zeta\in\gamma}|\phi_n''(\zeta;u)|\leq \frac{2n}{t_n(\Im z_n-L_n)} \max_{\zeta\in\gamma}|1+t_nG_{\mu_n}'(\zeta)|.\]
Now, for any \(\xi\) on the line segment between $z_n$ and $z$, we have for \(\zeta\in\gamma\) that 
\[\vert \Re\zeta -\Re z_n \vert \leq L_n+ \frac{3}{2}(\Im z_n +L_n),\]
so that by Lemma \ref{Lemma4.6} we have 
\[\vert \Re\zeta -x^{\ast} \vert \leq K_4 t_{n}^{\frac{1}{1-\k}},\]
where the constant \(K_4>0\) can be chosen uniformly in \(\zeta\) and \(\xi\) (and \(u\)). Hence, by Lemma \ref{Lemma4.3} there is a constant \(K_5>0\) such that for large \(n\) we have
\[y_{t_n,\mu_n}\left(\Re \zeta\right)\leq K_5 t_{n}^{\frac{1}{1-\k}} \]
uniformly in \(\zeta, \xi\) and \(u\). But then we find a small constant \(\hat{c}>0\) such that we have for large \(n\) uniformly
\[y_{\hat{c}t_n,\mu_n}\left(\Re \zeta\right)\leq \Im \zeta.\]
This gives 
\[\vert G_{\mu_n}'(\zeta) \vert  \leq \frac{1}{\hat{c}t_n},\]
for large \(n\) uniformly. This yields for \( \xi \in U_{L_n}(z_n)\)
\[|\phi_n'''(\xi;u)| \leq \frac{K_6 n}{t_n^{1+\frac{1}{1-\k}}}\]
for a constant \(K_6>0\), so that for \(\vert z-z_n \vert\leq L_n\) we obtain
\[|R_2(z)|\leq \frac{K_6 n}{t_n^{1+\frac{1}{1-\k}}}L_n^3 = o\left((\log n)^{1+\rho}\right),\]
as \(n\to\infty\). From this we obtain 
\[\Re\left\{\phi_n(z_n+iL_n,u)-\phi_n(z_n,u)\right\} \leq -K_3 \left(\log n\right)^{1+\rho}\]
for some constant \(K_3>0\) and \(n\) large enough, which is sufficient to show
\[\lim_{n\to\infty}\frac{1}{t_n c_{t_n}}e^{\frac{u-v}{t_n c_{t_n}}\left(x_n-x_{t_n}^{\ast}\right)} \tilde{I}_n^{(1)}=0,\]
uniformly in \(u\) and \(v\) on compact subsets.

\paragraph{\bf Estimation of $\tilde I_n^{(4)}$.}

From Lemma \ref{Lemma4.4} we know that the length of \(\gamma_n\cap U_{L_n}(w_n)\) is \(\mathcal{O} \left(L_n\right)\) as \(n\to\infty\), and that the angle between \(\gamma_n\) and the vertical line \(z_n +i \mathbb{R}\) cannot become small for large \(n\). Hence, we can proceed as in the estimation of $\tilde I_n^{(4)}$ in the proofs of Theorems 1.1 and 1.2, which means that we obtain

\[\left\vert \frac{1}{t_n c_{t_n}}e^{\frac{u-v}{t_n c_{t_n}}\left(x_n-x_{t_n}^{\ast}\right)} \tilde{I}_n^{(4)}\right\vert\leq \frac{L_n}{t_n c_{t_n}}e^{\frac{u-v}{t_n c_{t_n}}\left(x_n-x_{t_n}^{\ast}\right)+\Re\left\{\phi_n(z_n,u)-\phi_n(w_n,v)\right\}}. \]
Using Lemma \ref{Lemma4.8} the expression on the right-hand side vanishes as \(n\to\infty\) locally uniformly in \(u\) and \(v\).

\paragraph{\bf Estimation of $\tilde I_n^{(5)}$.}

By Lemma \ref{Lemma4.6} we have \(z_n -w_n = o \left(L_n\right)\) as \(n\to\infty\), locally uniformly in \(u\) and \(v\), which means that we again obtain
\[\tilde{d_n}^{-1}= \frac{1}{\min \vert z-w\vert} \leq \frac{K_1}{L_n}\]
for some constant \(K_1>0\), where the minimum is taken over the contour \([z_n-iL_n, z_n+iL_n]\times \left(\gamma_n\backslash U_{L_n}(w_n)\right)\).
Hence, we obtain
\begin{align*}&\left\vert \frac{1}{t_n c_{t_n}}e^{\frac{u-v}{t_n c_{t_n}}\left(x_n-x_{t_n}^{\ast}\right)} \tilde{I}_n^{(5)}\right\vert\\
&\leq \frac{K_1}{t_n c_{t_n} L_n} e^{\frac{u-v}{t_n c_{t_n}}\left(x_n-x_{t_n}^{\ast}\right)+\Re\left\{\phi_n(z_n,u)-\phi_n(w_n,v)\right\}} \\
&\times \int_{z_n-iL_n}^{z_n+iL_n}e^{\Re\left\{\phi_n (z,u)-\phi_n(z_n,u)\right\}} \vert dz\vert \int_{\gamma_n\backslash U_{L_n}(w_n)} e^{\Re\left\{\phi_n (w_n,v)-\phi_n(w,v)\right\}} \vert dz\vert,
\end{align*}
which, by Lemma \ref{Lemma4.8} can be estimated further to
\[\left\vert \frac{1}{t_n c_{t_n}}e^{\frac{u-v}{t_n c_{t_n}}\left(x_n-x_{t_n}^{\ast}\right)} \tilde{I}_n^{(5)}\right\vert\leq \frac{K_2}{t_n c_{t_n}} \int_{\gamma_n\backslash U_{L_n}(w_n)} e^{\Re\left\{\phi_n (w_n,v)-\phi_n(w,v)\right\}} \vert dz\vert\]
for some constant \(K_2>0\). Now defining \(\{w_n^{+},w_n^{-}\}=\gamma_n\cap \partial U_{L_n}(w_n)\) the last integral can be estimated above by
\[L(\gamma_n) \left\{e^{\Re\left\{\phi_n (w_n,v)-\phi_n(w_n^{+},v)\right\}}+e^{\Re\left\{\phi_n (w_n,v)-\phi_n(w_n^{-},v)\right\}}\right\}.\]
The length of \(\gamma_n\) can be estimated by Lemma \ref{lemma:length}, whereas the exponential terms can be bounded the same way as in the estimation of $\tilde I_n^{(1)}$ above. This finally leads to
\[\left\vert \frac{1}{t_n c_{t_n}}e^{\frac{u-v}{t_n c_{t_n}}\left(x_n-x_{t_n}^{\ast}\right)} \tilde{I}_n^{(5)}\right\vert=\mathcal{O}\left(n^{7/2} e^{-K_3\left(\log n\right)^{1+\rho}}\right),\]
as \(n\to\infty\), uniformly in \(u\) and \(v\) on compact subsets, for some constant \(K_3>0\). As previously seen in the proofs of Theorems 1.1 and 1.2, this completes the proof of Theorem 1.3.

\section{Proof of Theorem \ref{thrm_counterexample}}\label{section_thrm_counterexample}
We start with some elementary lemmas.
\begin{lemma}\label{lemma:xtau}Let $\mu$ be a probability measure  on the real line, $x^*\in\mathbb R$, $t>0$, and let $x_t^*$ be 
defined by \eqref{def:xtau}, 
	Then, \begin{equation}
	|x_t^*-x^*|\leq \sqrt{t}.
	\end{equation}
\end{lemma}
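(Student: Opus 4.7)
The plan is to exploit the fact that $x_t^*=H_{t,\mu}(x^*+iy_{t,\mu}(x^*))$ is, by construction, real-valued, so that the imaginary parts on both sides cancel and only the real part contributes to the displacement $x_t^*-x^*$. More precisely, writing $y:=y_{t,\mu}(x^*)\geq 0$ and using $H_{t,\mu}(z)=z+tG_\mu(z)$, one obtains
\begin{equation*}
x_t^*-x^* \;=\; t\,\Re G_\mu(x^*+iy)\;=\;t\int\frac{(x^*-s)\,d\mu(s)}{(x^*-s)^2+y^2},
\end{equation*}
which is consistent with the equivalent expression \eqref{H_map}. (When $y=0$, i.e.\ $t\le t_{cr}(x^*)$, the integral is still absolutely convergent because $\int d\mu(s)/(x^*-s)^2=1/t_{cr}<\infty$.)

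Next I would apply the Cauchy--Schwarz inequality with respect to the probability measure $\mu$:
\begin{equation*}
\left|\int\frac{x^*-s}{(x^*-s)^2+y^2}\,d\mu(s)\right|^{2}
\;\le\;\int\frac{(x^*-s)^2}{\bigl((x^*-s)^2+y^2\bigr)^2}\,d\mu(s)
\;\le\;\int\frac{d\mu(s)}{(x^*-s)^2+y^2}.
\end{equation*}
The last step just drops the factor $(x^*-s)^2/\bigl((x^*-s)^2+y^2\bigr)\le 1$ inside the integrand.

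Finally, I would invoke the defining inequality \eqref{def:y_t} of $y_{t,\mu}$, which gives $\int d\mu(s)/((x^*-s)^2+y^2)\le 1/t$ (with equality if $y>0$, and inequality if $y=0$, i.e.\ $t\le t_{cr}$). Combining the three displays yields $|x_t^*-x^*|^2\le t^2\cdot(1/t)=t$, which is the claim. There is no substantial obstacle; the only point deserving care is the case $y_{t,\mu}(x^*)=0$, where one must make sure the defining inequality in \eqref{def:y_t} still controls the integral appearing in \eqref{H_map}, which it does by construction.
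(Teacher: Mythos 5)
Your proof is correct and takes essentially the same route as the paper: both use Cauchy--Schwarz together with the defining inequality \eqref{def:y_t} to bound the Stieltjes transform $G_\mu$ at the point $x^*+iy_{t,\mu}(x^*)$ by $1/\sqrt{t}$, then multiply by $t$. The paper bounds $|G_\mu(z)|$ on all of $\overline{\Omega_{t,\mu}}$ and drops the imaginary part at the very end, while you work with $\Re G_\mu$ from the start; the two are equivalent.
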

\begin{proof}
	If $z\in \overline{\Omega_{t,\mu}}$, it follows from \eqref{def:y_t} and the Cauchy-Schwarz inequality that the Stieltjes 
transform $G_{\mu}(z)$ satisfies the bound
	\begin{equation}\label{eq:estimateG}
	|G_{\mu}(z)|\leq \int\frac{d\mu(s)}{|z-s|}\leq\left(\int\frac{d\mu(s)}{|z-s|^2}\right)^{1/2}\leq \frac{1}{\sqrt t}.
	\end{equation}
	It follows from \eqref{def:H} that
	\begin{equation}\label{eq:estimateH}\sup_{z\in \overline{\Omega_{t,\mu}}} \left|H_{t,\mu}(z)-z \right|\leq \sqrt{t}.
	\end{equation}
	Applying this to $z=x^*+iy_{t,\mu}(x^*)$ and using \eqref{def:xtau}, we get 
	\[|H_{t,\mu}(x^*+iy_{t,\mu}(x^*))-x^*-iy_{t,\mu}(x^*)|\leq \sqrt{t},\]
	and the result now follows easily.
\end{proof}

\begin{lemma}\label{lemma:zn}
	Let $\mu,\nu$ be probability measures on the real line, let $x^*\in\mathbb R$, and let $x_t^*$ be defined by 
\eqref{def:xtau}. 
	We have for any $\epsilon\in\mathbb R, t>0$ that \begin{equation}
	|\Re \left\{F_{t,\nu}\left(x_{t}^*+\epsilon\right)\right\}-x^*|\leq 2\sqrt{t}+|\epsilon|.
	\end{equation}
\end{lemma}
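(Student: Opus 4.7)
The plan is to combine Lemma \ref{lemma:xtau} with the same Stieltjes-transform bound that was used in its proof, now applied to the measure $\nu$ instead of $\mu$. First I would set $w := F_{t,\nu}(x_t^* + \epsilon)$, which by definition lies on the graph of $y_{t,\nu}$, i.e., $w \in \overline{\Omega_{t,\nu}}$, and satisfies $H_{t,\nu}(w) = x_t^* + \epsilon$.

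Next, I would repeat the Cauchy--Schwarz estimate from the proof of Lemma \ref{lemma:xtau}, but now for $\nu$: for any $z\in\overline{\Omega_{t,\nu}}$,
\begin{equation}
|G_\nu(z)| \leq \left(\int\frac{d\nu(s)}{|z-s|^2}\right)^{1/2} \leq \frac{1}{\sqrt{t}},
\end{equation}
so that $|H_{t,\nu}(z)-z|\leq \sqrt{t}$ on $\overline{\Omega_{t,\nu}}$, exactly as in \eqref{eq:estimateH}. Applying this to $w$ yields
\begin{equation}
|w - (x_t^* + \epsilon)| = |H_{t,\nu}(w) - w| \leq \sqrt{t}.
\end{equation}

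Since $x_t^*$ and $\epsilon$ are real, taking real parts preserves the inequality: $|\Re(w) - x_t^* - \epsilon|\leq \sqrt{t}$. The triangle inequality then gives
\begin{equation}
|\Re(w) - x^*| \leq |\Re(w) - x_t^* - \epsilon| + |\epsilon| + |x_t^* - x^*| \leq \sqrt{t} + |\epsilon| + \sqrt{t},
\end{equation}
where in the last step we invoked Lemma \ref{lemma:xtau} to bound $|x_t^*-x^*|\leq\sqrt{t}$. This yields the desired estimate.

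There is essentially no obstacle here; the only thing to note is that the bound \eqref{eq:estimateH} is a general consequence of $z$ lying in $\overline{\Omega_{t,\nu}}$ and therefore applies verbatim with $\nu$ in place of $\mu$, even though Lemma \ref{lemma:xtau} was stated with reference to $\mu$ only.
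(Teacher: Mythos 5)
Your proof is correct and follows essentially the same route as the paper: apply the bound \eqref{eq:estimateH} (with $\nu$ in place of $\mu$) to $F_{t,\nu}(x_t^*+\epsilon)$ and then combine with Lemma \ref{lemma:xtau} via the triangle inequality. The only cosmetic difference is that you take real parts before the triangle inequality, whereas the paper bounds the full modulus $|F_{t,\nu}(x_t^*+\epsilon)-x^*|$ and lets the real-part bound follow trivially; both are equivalent.
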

\begin{proof}
	It follows from \eqref{eq:estimateH} applied to $\nu$ and $z=F_{t,\nu}(x_t^*+\epsilon)$ that
	\[\left|F_{t,\nu}\left(x_t^*+\epsilon\right) -x_t^*-\epsilon\right|\leq \sqrt{t}.\]
	By Lemma \ref{lemma:xtau}, it follows that
	\[\left|F_{t,\nu}\left(x_{t}^*+\epsilon\right)-x^{\ast}\right|\leq 2\sqrt{t}+|\epsilon|.\]
\end{proof}

\begin{lemma}\label{lemma:znwn}
	Let $x^*\in\mathbb R$, let $\delta_n, \epsilon_n, t_n$ be sequences of positive numbers converging to $0$, as $n\to\infty$, and let 
$\mu_n$ be a sequence of probability measures. 
	Define, for $u,v\in\mathbb R$,
	\begin{equation}\label{eq: def znwn}
	z_n=F_{t_n,\mu_n}\left(x_{t_n}^*+\epsilon_n u\right)
	,\qquad w_n=
	F_{t_n,\mu_n}\left(x_{t_n}^*+\epsilon_n v\right).
	\end{equation}
	If $[x^*-\delta_n, x^*+\delta_n]$ does not intersect with the support of $\mu_n$, if $\epsilon_n=o(\delta_n)$, and if $t_n=o(\delta_n^2)$ as $n\to\infty$, we 
have
	\begin{equation}\label{eq:estimateznwn}
	\Im z_n=\Im w_n=0,\qquad |z_n-w_n|\leq 2\epsilon_n|u-v|
	\end{equation}
	for $n$ sufficiently large, locally uniformly in \(u,v\).
\end{lemma}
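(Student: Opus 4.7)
The plan is to exploit the gap hypothesis: when $\mu_n$ has no mass in $[x^*-\delta_n,x^*+\delta_n]$ and $t_n=o(\delta_n^2)$, the function $y_{t_n,\mu_n}$ vanishes identically on a neighborhood of $x^*$, so that $F_{t_n,\mu_n}$ applied to values close to $x_{t_n}^*$ returns real numbers; the Lipschitz estimate will then follow from a uniform lower bound on $H_{t_n,\mu_n}'$ on that same neighborhood.

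First I would invoke Lemma \ref{lemma:zn} to obtain
\[|\Re z_n - x^*|\le 2\sqrt{t_n}+\epsilon_n|u|,\qquad |\Re w_n - x^*|\le 2\sqrt{t_n}+\epsilon_n|v|.\]
Under the hypotheses $\epsilon_n=o(\delta_n)$ and $t_n=o(\delta_n^2)$, both right-hand sides are $o(\delta_n)$ locally uniformly in $u,v$. Consequently, for $n$ sufficiently large both $\Re z_n$ and $\Re w_n$ lie in the interval $J_n:=[x^*-\delta_n/2,\,x^*+\delta_n/2]$, which is at distance at least $\delta_n/2$ from $\textup{supp}(\mu_n)$ by assumption.

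Next I would verify $\Im z_n=\Im w_n=0$ by showing that $y_{t_n,\mu_n}$ vanishes on $J_n$. For any $x\in J_n$,
\[\int\frac{d\mu_n(s)}{(x-s)^2}\le \frac{4}{\delta_n^2}<\frac{1}{t_n}\]
for $n$ large, so the defining infimum in \eqref{def:y_t} is attained at $y=0$, i.e.\ $y_{t_n,\mu_n}(x)=0$. Since $z_n$ lies on the graph of $y_{t_n,\mu_n}$ by construction and $\Re z_n\in J_n$, this forces $\Im z_n=0$; the same argument gives $\Im w_n=0$.

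Finally, for the Lipschitz bound I would note that on $J_n$, which is disjoint from $\textup{supp}(\mu_n)$, the function $H_{t_n,\mu_n}$ is real-analytic with derivative
\[H_{t_n,\mu_n}'(x)=1-t_n\int\frac{d\mu_n(s)}{(x-s)^2}\ge 1-\frac{4t_n}{\delta_n^2}\ge \tfrac{1}{2}\]
for $n$ large. Hence the restriction of $H_{t_n,\mu_n}$ to $J_n$ is a strictly increasing diffeomorphism onto its image, whose inverse has derivative at most $2$. Since $z_n,w_n\in J_n$ are precisely the preimages of $x_{t_n}^*+\epsilon_n u$ and $x_{t_n}^*+\epsilon_n v$, the mean value theorem immediately yields $|z_n-w_n|\le 2\epsilon_n|u-v|$. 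The argument is essentially mechanical given Lemma \ref{lemma:zn}; the only point requiring attention is that all the constants be uniform in $u,v$ on compact sets, which is transparent from the explicit bounds above.
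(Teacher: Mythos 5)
Your proof is correct. The first two steps — locating $\Re z_n,\Re w_n$ in a half-size gap interval via Lemma \ref{lemma:zn} and then deducing that $y_{t_n,\mu_n}$ vanishes on that interval so that $z_n,w_n$ are real — coincide with the paper's argument (the paper phrases the vanishing of $y_{t_n,\mu_n}$ via the criterion ${\rm dist}(x,{\rm supp}(\mu_n))\geq\sqrt{t_n}$, which is precisely the computation $\int d\mu_n(s)/(x-s)^2\leq 4/\delta_n^2<1/t_n$ you spell out). Where you diverge is in the final Lipschitz estimate. The paper writes out the two saddle equations $z_n-x_{t_n}^*-\epsilon_n u+t_nG_{\mu_n}(z_n)=0$ and $w_n-x_{t_n}^*-\epsilon_n v+t_nG_{\mu_n}(w_n)=0$, subtracts them, bounds $|G_{\mu_n}(z_n)-G_{\mu_n}(w_n)|\leq 4\delta_n^{-2}|z_n-w_n|$ by the Cauchy--Schwarz inequality, and then solves the resulting self-improving inequality $|z_n-w_n|\leq\epsilon_n|u-v|+4t_n\delta_n^{-2}|z_n-w_n|$. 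You instead note directly that $H_{t_n,\mu_n}'\geq 1-4t_n/\delta_n^2\geq \tfrac12$ on the gap interval and apply the mean value theorem to the restriction of $H_{t_n,\mu_n}$ to that real interval. The two routes are algebraically equivalent — your lower bound on $H_{t_n,\mu_n}'$ is exactly the coefficient $1-4t_n\delta_n^{-2}$ that appears after rearranging the paper's inequality — but your formulation is slightly cleaner, avoids Cauchy--Schwarz, and makes explicit the useful conceptual fact that $H_{t_n,\mu_n}$ restricted to the gap is a real diffeomorphism with uniformly bounded inverse derivative.
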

\begin{proof}Because of the conditions imposed on the sequences $\delta_n, \epsilon_n, t_n$, we have by Lemma \ref{lemma:zn} that $|\Re 
z_n-x^*|=o\left(\delta_n\right)$ as $n\to\infty$, and it follows that $[\Re z_n-\delta_n/2, \Re z_n+\delta_n/2]$ does not 
intersect with the support of $\mu_n$ for $n$ sufficiently large. Similar estimates hold for $w_n$.
	Furthermore, if $x\in\mathbb R$ is such that ${\rm dist}(x,{\rm supp}({\mu}_n))\geq \sqrt{t_n},$ 
	we have
	\[y_{t_n,{\mu}_n}(x)=0,\]
	and this implies that $z_n, w_n\in\mathbb R$.
	Next, it is easy to see that
	\begin{equation}\left|G_{\mu_n}'(z_n)\right|\leq \frac{4}{\delta_n^2},\qquad \left|G_{\mu_n}'(w_n)\right|\leq 
\frac{4}{\delta_n^2}\label{eq: estimates Gn}\end{equation}
	for $n$ sufficiently large. We also have
	\[|G_{\mu_n}(z_n)-G_{\mu_n}(w_n)|=|z_n-w_n|\left\vert \int \frac{d\mu_n(s)}{(z_n-s)(w_n-s)}\right\vert.\]
	The latter can be estimated using the Cauchy-Schwarz inequality by
	\[\leq |z_n-w_n| \sqrt{\int \frac{d\mu_n(s)}{(z_n-s)^2} \int \frac{d\mu_n(s)}{(w_n-s)^2}}=  
|z_n-w_n|\sqrt{G_{\mu_n}'(z_n) G_{\mu_n}'(w_n)},  \]
	giving
	\begin{equation}\label{nosine_eq2}
	|G_{{\mu}_n}(z_n)-G_{{\mu}_n}(w_n)|\leq 4|z_n-w_n| \delta_n^{-2},
	\end{equation}
	for $n$ sufficiently large.
	By the definition of \(z_n\) and \(w_n\) we have
	\[z_n-x_{t_n}^{\ast}-\epsilon_n u+t_n G_{{\mu}_n}(z_n)=0, \]
	\[w_n-x_{t_n}^{\ast}-\epsilon_n v+t_n G_{{\mu}_n}(w_n)=0,\]
	so that 
	\[|z_n-w_n|\leq \epsilon_n|u-v|+t_n |G_{{\mu}_n}(z_n)-G_{{\mu}_n}(w_n)|\leq \epsilon_n|u-v|+4t_n \delta_n^{-2} 
|z_n-w_n|, \]
	which implies \eqref{eq:estimateznwn} for $n$ large, since $t_n\delta_n^{-2}\to 0$ as $n\to\infty$.
\end{proof}

\vspace{1cm}

For the correlation kernel $K_{n,t}$, recall \eqref{eq:Kncontour3}, which gives
\[\epsilon_n K_{n,t_n}\left(x_{t_n}^{\ast}+\epsilon_n u, x_{t_n}^{\ast}+\epsilon_n v\right)= A_n(u,v)+ I_n(u,v),\]
with
\[I_n(u,v)=e^{-\frac{n\epsilon_n^2}{2t_n}(u^2-v^2)+\frac{n\epsilon_n}{t_n}(u-v)(\Re z_n-x^*)}\frac{n\epsilon_n}{(2\pi i)^2 t_n}\int\limits_{\Re z_n - i\infty}^{\Re z_n + i\infty} dz \int\limits_{\gamma_n} dw\, 
e^{\phi_n(z,u)-\phi_n(w,v)}\frac{1}{z-w},\]
where 
\[\phi_n(z,u)=\frac{n}{2 t_n}\left\{(z-x_{t_n}^{\ast} -\epsilon_n u)^2+2t_n g_{\mu_n}(z)\right\},\]
\(\gamma_n\) consists of the graph \(y_{t_n,{\mu}_n}\) and its complex conjugate (positively oriented), and 
\[A_n(u,v)=\frac{1}{\pi (u-v)}\sin\left(\frac{n\epsilon_n s}{t_n} (u-v)\right),\]
where \(s=\Im\left\{F_{t_n, {\mu}_n}\left(x_{t_n}^{\ast}+\epsilon_n u\right)\right\}=\Im z_n\) and \(x_0=\Re z_n\).
Under the conditions of Lemma \ref{lemma:znwn}, $z_n$ is real for large $n$ so that\[A_n(u,v)=0.\]

Similarly as in the proof of Lemma \ref{Lemma4.8}, we have
\[\phi_n(z,u)-\phi_n(z,v)=\frac{n\epsilon_n^2}{2t_n}(u^2-v^2)-\frac{n\epsilon_n}{t_n}(u-v)(z-x^*),\]
and it follows that we can write $I_n(u,v)$ as 
\[I_n(u,v)=\frac{n\epsilon_n}{(2\pi i)^2 t_n}\int\limits_{z_n - i\infty}^{z_n + i\infty} dz \int\limits_{\gamma_n} dw\, 
e^{\phi_n(z,v)-\phi_n(w,v) -\frac{n\epsilon_n}{t_n}(u-v)(z-z_n) }\frac{1}{z-w}.\]
Thus, in order to see that 
\[\lim_{n\rightarrow \infty}\epsilon_n K_{n,t_n}\left(x_{t_n}^{\ast}+\epsilon_n u, x_{t_n}^{\ast}+\epsilon_n v\right)=0,\]
it is sufficient to show
\[\lim_{n\rightarrow \infty} I_n(u,v)=0.\]
The critical points of $\phi_n(z,v)$ and $\phi_n(w,v)$ are defined by the equations
\[H_{t_n,\mu_n}(z)=x_{t_n}^*+\epsilon_n v,\qquad H_{t_n,\mu_n}(w)=x_{t_n}^*+\epsilon_n v,\] which means that they are both precisely 
the real point $w_n$ defined in \eqref{eq: def znwn}. If $\epsilon_n=o(\delta_n)$  and $t_n=o(\delta_n^2)$ as $n\to\infty$, it 
follows from Lemma \ref{lemma:zn} that $w_n$ lies in an interval $[x^*-\delta_n/4, x^*+\delta_n/4]$ for $n$ sufficiently large.

It is crucial to observe that the parts of $\gamma_n$ lying on the real line do not contribute to the integral $I_n(u,v)$, as they cancel out against their complex conjugate.
As a consequence of this observation and by symmetry with respect to complex conjugation, in order to show that the large \(n\) limit of \(I_n(u,v)\) is zero, it is sufficient to show that
\begin{equation}\label{nosine_eq5}
\lim_{n\to\infty}\frac{n\epsilon_n}{t_n} \left\vert \int\limits_{z_n}^{z_n+i\infty} dz \int\limits_{\gamma_n^{\pm}} dw\, 
e^{\phi_n(z,v)-\phi_n(w,v) -\frac{n\epsilon_n}{t_n}(u-v)(z-z_n)} \frac{1}{z-w}\right\vert =0,
\end{equation}
where \(\gamma_n^{\pm}\) denotes the part of \(\gamma_n\) which lies to the right ($+$) or left ($-$) of $w_n$ and which is not 
located on the real axis.  For \(w\in\gamma_n^{\pm}\) we have \(|\Re w-\Re w_n| \geq \delta_n/2\) for $n$ sufficiently large by 
construction.

Using the inequality \(|z-w|\geq \delta_n/2\) for \((z,w)\in(z_n+i\mathbb R)\times\gamma_n^{\pm}\), we have
\begin{equation}\label{estimateI123}\frac{n\epsilon_n}{t_n}\left\vert \int\limits_{z_n}^{z_n+i\infty} dz \int\limits_{\gamma_n^{\pm}} dw\, 
e^{\phi_n(z,v)-\phi_n(w,v) -\frac{n\epsilon_n}{t_n}(u-v)(z-z_n)} \frac{1}{z-w}\right\vert \leq \frac{2n\epsilon_n}{t_n\delta_n} I_n^{(1)} \times I_n^{(2)},
\end{equation}
with
\begin{align}
&I_n^{(1)}=\int\limits_{z_n}^{z_n+i\infty} e^{\Re \left(\phi_n (z,v)-\phi_n(z_n,v)\right)} \vert dz\vert,\\
&I_n^{(2)}= \int\limits_{\gamma_n^{\pm}} e^{\Re(\phi_n(w_n,v)-\phi_n (w,v))}  \vert dw \vert.
\end{align}
We set 
	\begin{equation}
	L_n=\frac{\sqrt{t_n}}{\log n}
	\end{equation} and we will now estimate these integrals.

\begin{lemma}\label{lemma: I3}
	Let us assume the conditions of Lemma \ref{lemma:znwn}, and additionally let \(n\delta_n^2 \to +\infty\), as \(n\to\infty\). Then, locally uniformly in \(v\), 
we have
	\begin{equation}\label{eq: nosine6}I_n^{(2)}\leq |\gamma_n^\pm|e^{-\frac{n}{4t_n}L_n^2}
	\end{equation}
	for $n$ sufficiently large, where $|\gamma_n^\pm|$ denotes the length of the curve $\gamma_n^\pm$.
\end{lemma}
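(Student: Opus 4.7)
The plan is to show that for every $w\in\gamma_n^\pm$ we have $\Re(\phi_n(w_n,v)-\phi_n(w,v))\leq -nL_n^2/(4t_n)$; the asserted bound on $I_n^{(2)}$ then follows by integrating $|dw|$. The strategy combines a Taylor-type lower bound near the saddle $w_n$ with the descent property of the contour $\gamma_n$, both of which are sharpened considerably by the assumption $t_n=o(\delta_n^2)$.

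First I would locate $w_n$ and the real segment of $\gamma_n$ containing it. Lemma \ref{lemma:znwn} gives $\Im w_n=0$, and Lemma \ref{lemma:zn} gives $|w_n-x^*|\leq 2\sqrt{t_n}+\epsilon_n|v|\leq \delta_n/4$ for $n$ large and $v$ in a compact set. Moreover, for any $x\in[x^*-\delta_n/2,\,x^*+\delta_n/2]$ every atom of $\mu_n$ lies at distance at least $\delta_n/2$ from $x$, so
\[\int\frac{d\mu_n(s)}{(x-s)^2}\leq\frac{4}{\delta_n^2}<\frac{1}{t_n}\]
for $n$ large (using $t_n=o(\delta_n^2)$). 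By \eqref{def:y_t} this forces $y_{t_n,\mu_n}(x)=0$ on the interval $[x^*-\delta_n/2,\,x^*+\delta_n/2]$. Consequently, the endpoints $\alpha_n^\pm$ of the real-axis portion of $\gamma_n$ running through the gap satisfy $\alpha_n^+-w_n\geq\delta_n/4$ and $w_n-\alpha_n^-\geq\delta_n/4$, and the estimate $H_{t_n,\mu_n}'(\eta)=1-t_n\int d\mu_n(s)/(\eta-s)^2\geq 1/2$ holds for every real $\eta\in[x^*-\delta_n/2,\,x^*+\delta_n/2]$.

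Next I would establish a quadratic lower bound near $w_n$. Since $\phi_n'(w,v)=(n/t_n)(H_{t_n,\mu_n}(w)-H_{t_n,\mu_n}(w_n))$ vanishes at $w=w_n$, two integrations along the real axis give, for real $w$ with $|w-w_n|\leq\delta_n/4$,
\[\phi_n(w,v)-\phi_n(w_n,v)=\frac{n}{t_n}\int_{w_n}^w\!\!\int_{w_n}^s H_{t_n,\mu_n}'(\eta)\,d\eta\,ds\;\geq\;\frac{n(w-w_n)^2}{4t_n}.\]
Applying this at $w=w_n\pm\delta_n/4$ and using the sign $H_{t_n,\mu_n}'\geq 0$ throughout $[\alpha_n^-,\alpha_n^+]$ (a direct consequence of \eqref{def:y_t}) to propagate monotonicity outward to the endpoints, one obtains
\[\phi_n(\alpha_n^\pm,v)-\phi_n(w_n,v)\;\geq\;\frac{n\delta_n^2}{64\,t_n}.\]

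Finally, I would transfer this bound to all of $\gamma_n^\pm$ via the descent structure of $\gamma_n$. Parametrising the upper-half-plane arc of $\gamma_n^\pm$ as $w=x+iy_{t_n,\mu_n}(x)$ and using that $H_{t_n,\mu_n}(w)$ is real along the graph, one computes
\[\frac{d}{dx}\Re\phi_n(w,v)=\frac{n}{t_n}\bigl(H_{t_n,\mu_n}(w)-H_{t_n,\mu_n}(w_n)\bigr),\]
and Biane's theorem says $H_{t_n,\mu_n}$ is a monotone bijection from $\partial\Omega_{t_n,\mu_n}$ onto $\mathbb R$. Travelling along $\partial\Omega_{t_n,\mu_n}$ from $w_n$ first along the real gap up to $\alpha_n^\pm$ and then onto the graph, the right-hand side above remains nonnegative, and by complex conjugation the same bound holds on the lower arc. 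Therefore $\Re\phi_n(w,v)\geq \phi_n(\alpha_n^\pm,v)$ for every $w\in\gamma_n^\pm$, and combining with the previous step,
\[\Re(\phi_n(w_n,v)-\phi_n(w,v))\;\leq\;-\frac{n\delta_n^2}{64\,t_n}\;\leq\;-\frac{nL_n^2}{4t_n}\]
for $n$ large, where the last inequality uses $L_n^2=t_n/(\log n)^2$ and $\delta_n^2/t_n\to\infty$. Integrating $|dw|$ over $\gamma_n^\pm$ yields the claim. The main technical subtlety is verifying monotonicity of $\Re\phi_n$ along $\gamma_n^\pm$ when the saddle $w_n$ sits on the real segment of $\gamma_n$ rather than on the graph proper; this reduces, as above, to Biane's monotone parametrization of $\partial\Omega_{t_n,\mu_n}$ by $H_{t_n,\mu_n}$, first along the real gap and then along the graph.
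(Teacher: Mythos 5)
Your proof is correct, and it is genuinely different from the paper's argument in one useful way. Both proofs reduce the estimate to bounding $\Re(\phi_n(w_n,v)-\phi_n(w,v))$ on $\gamma_n^\pm$ by descent of $\phi_n$ along $\gamma_n$, and both exploit the spectral gap to locate a real interval of $\gamma_n$ around the saddle $w_n$. The difference lies in how the quadratic lower bound is obtained. The paper Taylor-expands $\phi_n$ around $w_n$ to second order, uses $\phi_n''(w_n,v)\sim n/t_n$, and then controls the cubic remainder $R_2$ by a Cauchy estimate on a disk of radius $\delta_n/4$; that remainder bound is exactly where the extra hypothesis $n\delta_n^2\to\infty$ enters (it is needed to keep the $\log(2/\delta_n)$ factor comparable to $\log n$). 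You instead write $\phi_n(w,v)-\phi_n(w_n,v)=\frac{n}{t_n}\int_{w_n}^w\int_{w_n}^s H_{t_n,\mu_n}'(\eta)\,d\eta\,ds$ and use the elementary pointwise bound $H_{t_n,\mu_n}'(\eta)=1-t_n\int\frac{d\mu_n(s)}{(\eta-s)^2}\geq 1-\frac{4t_n}{\delta_n^2}\geq\frac12$ on the gap, which follows directly from $t_n=o(\delta_n^2)$. This gives a clean quadratic lower bound with no remainder estimate, and as a result your argument does not invoke $n\delta_n^2\to\infty$ at all — a mild simplification. Your final comparison $\frac{n\delta_n^2}{64t_n}\geq\frac{nL_n^2}{4t_n}$ for large $n$ is correct since $L_n^2/\delta_n^2=\frac{t_n}{\delta_n^2(\log n)^2}\to 0$. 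The monotonicity transfer from $\alpha_n^\pm$ to the rest of $\gamma_n^\pm$ via Biane's boundary homeomorphism is exactly the paper's descent argument in disguise, so no issue there.
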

\begin{proof}We give the proof for $\gamma_n^+$, the case of $\gamma_n^-$ is similar.
	We easily
	obtain the bound
	\[I_n^{(2)}\leq 
	e^{\max_{w\in\gamma_n^+}\Re\left( \phi_n(w_n,v)-\phi_n (w,v) \right)} |\gamma_n^+|.\]
	
	Next, we note that for $x\in [w_n-L_n, w_n+L_n]$, ${\rm dist}(x,{\rm supp}({\mu}_n))\geq \sqrt{t_n},$ for $n$ large,
	which means that
	\[y_{t_n,{\mu}_n}(x)=0.\]
	Hence, $[w_n-L_n, w_n+L_n]$ is part of $\gamma_n$, but not of $\gamma_n^+$. Since \(\gamma_n\) is a path of descent 
for \(-\phi_n(w,v)\), we obtain that $-\Re\phi_n(w,v)$ attains its maximum on $\gamma_n^+$ at a real point $w\geq w_n+L_n$, and we have 
	\[e^{\Re\left( \phi_n(w_n,v)-\phi_n (w,v) \right)}\leq e^{\Re\left( \phi_n(w_n,v)-\phi_n (w_n+L_n,v) \right)}\] for 
$w\in\gamma_n^+$.

	It follows that
	\begin{equation}\label{eq:In3estimateintermediate}I_n^{(3)}\leq |\gamma_n^+|
	e^{\Re\left( \phi_n(w_n,v)-\phi_n (w_n+L_n,v) \right)}.\end{equation}

	Moreover, we have
	\[\phi_n''(z,v)=\frac{n}{t_n}+n G_{{\mu}_n}'(z).\]
	By \eqref{eq: estimates Gn}, we have 
	\[\phi_n''(z_n,v)\sim \frac{n}{t_n},\qquad 
	\phi_n''(w_n,v)\sim \frac{n}{t_n},\]
	as \(n\rightarrow \infty\).
	For \(|z-x^*|\leq \delta_n/2\) we have
	\[|g_{{\mu}_n}(z)|\leq\log \frac{2}{\delta_n},\] for $n$ sufficiently large, with the branches of the function 
	\[g_{{\mu}_n}(z)=\int \log (z-s) d{\mu}_n(s)\]
	defined such that \(g_{{\mu}_n}\) is analytic in the disk centered at $x^{\ast}$ with radius \(\delta_n\).
	If \(|w-w_n|\leq L_n\), we have
	\[\phi_n(w,v)= \phi_n(w_n,v)+\frac{1}{2}\phi_{n}''(w_n,v)(w-w_n)^2+R_2(w),\]
	where 
	\[|R_2(w)|\leq \frac{64\max_{|w_n-s|=\delta_n/4}|\phi_n(s,v)|}{\delta_n^3}\frac{|w-w_n|^3}{1-\frac{4|w-w_n|}{\delta_n}}\leq 
\frac{64\max_{|x^*-s|=\delta_n/2}|\phi_n(s,v)||w-w_n|^3}{\delta_n^3(1-\frac{4|w-w_n|}{\delta_n})},\]
	which, by use of Lemma \ref{lemma:xtau}, can be estimated for large $n$ by
	\[\mathcal 
O\left(\frac{n\log\frac{2}{\delta_n}+\frac{n\delta_n^2}{t_n}}{\delta_n^3}L_n^3\right)=o\left(\frac{nL_n^2}{t_n}\right),\qquad 
n\to\infty,\]
where in the last equality we used the assumption \(n\delta_n^2 \to\infty\), as \(n\to\infty\).
	Using this in \eqref{eq:In3estimateintermediate}, we obtain \eqref{eq: nosine6}.
\end{proof}     

\begin{lemma}\label{lemma: I2}
	Let us assume the conditions of Lemma \ref{lemma:znwn}. Then, locally uniformly in \(u,v\), we have
	\begin{equation}
	I_n^{(1)}=\mathcal O\left(\sqrt{t_n}\right),
	\end{equation}
	as \(n\to\infty\).
\end{lemma}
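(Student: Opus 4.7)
The plan is to parametrize the vertical contour by $z = z_n + i\tau$ with $\tau \geq 0$ and produce a pointwise Gaussian upper bound on $\Re(\phi_n(z_n+i\tau,v) - \phi_n(z_n,v))$, then integrate. By Lemma \ref{lemma:znwn} we have $z_n \in \R$ for all $n$ sufficiently large (locally uniformly in $u$), so a direct computation starting from
\[
\phi_n(z,v) = \frac{n}{2t_n}(z - x_{t_n}^{\ast} - \epsilon_n v)^2 + n\,g_{\mu_n}(z), \qquad g_{\mu_n}(z_n + i\tau) = \int \log(z_n - s + i\tau)\,d\mu_n(s),
\]
yields
\[
\frac{d}{d\tau}\Re \phi_n(z_n + i\tau, v) = \frac{n\tau}{t_n}\left[t_n \int \frac{d\mu_n(s)}{(z_n - s)^2 + \tau^2} - 1\right].
\]
Note in particular that the right-hand side is independent of $v$, which gives local uniformity in $v$ for free.

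The key step is to show the bracket is $\leq -1/2$ uniformly in $\tau \geq 0$. By Lemma \ref{lemma:zn} together with the hypotheses $\epsilon_n = o(\delta_n)$ and $t_n = o(\delta_n^2)$, we have $|z_n - x^\ast| = o(\delta_n)$ locally uniformly in $u$; in particular $|z_n - x^\ast| \leq \delta_n/4$ for $n$ large. Since the interval $[x^\ast - \delta_n, x^\ast + \delta_n]$ contains no starting point $a_j^{(n)}$, this forces $|z_n - s| \geq 3\delta_n/4$ for every $s$ in the support of $\mu_n$, so that
\[
t_n \int \frac{d\mu_n(s)}{(z_n - s)^2 + \tau^2} \leq t_n \int \frac{d\mu_n(s)}{(z_n - s)^2} \leq \frac{16\, t_n}{9\, \delta_n^2} \longrightarrow 0.
\]
Consequently, for $n$ large, $\frac{d}{d\tau}\Re \phi_n(z_n + i\tau, v) \leq -\frac{n\tau}{2 t_n}$ for all $\tau \geq 0$, and integrating in $\tau$ delivers the uniform quadratic upper bound $\Re(\phi_n(z_n+i\tau,v) - \phi_n(z_n,v)) \leq -\frac{n\tau^2}{4t_n}$.

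Substituting into the definition of $I_n^{(1)}$ and evaluating the resulting Gaussian integral produces
\[
I_n^{(1)} \leq \int_0^\infty e^{-\frac{n\tau^2}{4t_n}}\,d\tau = \sqrt{\frac{\pi t_n}{n}} = \O\bigl(\sqrt{t_n}\bigr),
\]
which is the claimed bound (in fact with an extra factor $n^{-1/2}$ to spare). I do not foresee a serious obstacle. The only mild subtlety is that the contour passes through $z_n$ rather than through the genuine saddle $w_n$ of $\phi_n(\cdot, v)$; however the gap hypothesis $t_n/\delta_n^2 \to 0$ makes the $g_{\mu_n}$ contribution to the second derivative subdominant all along the vertical line, so the quadratic part of $\phi_n$ alone provides the required descent from $z_n$.
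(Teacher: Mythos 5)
Your proof is correct, and it takes a genuinely different route from the paper's. The paper parametrizes $z=z_n+i\xi$, substitutes the explicit forms of $g_{\mu_n}$ and the quadratic part into the integrand, changes variables $\xi=\sqrt{t_n}\,x$, and then uses the crude bound $|z_n-s|\geq\sqrt{t_n}$ (which holds since $|z_n-s|\geq\delta_n/2 \gg \sqrt{t_n}$) together with the monotonicity of $a\mapsto\log|1+ia|$ to dominate the integrand by $\sqrt{1+x^2}\,e^{-x^2/2}$, giving $I_n^{(1)}=\mathcal O(\sqrt{t_n})$. You instead differentiate $\Re\phi_n(z_n+i\tau,v)$ in $\tau$ --- the same descent computation that appears in the proofs of Theorems 1.1--1.3 --- and exploit the much stronger bound $|z_n-s|\geq 3\delta_n/4$ coming from the gap hypothesis to show the bracket is $\leq -1/2$ for \emph{all} $\tau\geq 0$ once $16t_n/(9\delta_n^2)\leq 1/2$. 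Integrating the resulting $\frac{d}{d\tau}\Re\phi_n\leq -\frac{n\tau}{2t_n}$ twice yields a clean Gaussian dominator $e^{-n\tau^2/(4t_n)}$, hence $I_n^{(1)}=\mathcal O(\sqrt{t_n/n})$.

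The trade-off: the paper's estimate is more hands-on (elementary inequalities on $\log$, no derivative bookkeeping) and, because it only needs $|z_n-s|\geq\sqrt{t_n}$, would survive even if the gap were much smaller; however, it loses a factor of $\sqrt{n}$ because the bound $-\frac{x^2}{2}+\frac{1}{2}\log(1+x^2)\leq 0$ is used with the trivial replacement $e^{n(\cdot)}\leq e^{(\cdot)}$. Your approach fully exploits the strength of the hypothesis $t_n=o(\delta_n^2)$ to turn the whole vertical ray into a path of \emph{uniform} steep descent, recovering the extra $n^{-1/2}$, and the observation that the derivative formula is $v$-independent gives uniformity in $v$ for free. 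Since the statement only requires $\mathcal O(\sqrt{t_n})$, both proofs suffice, but yours yields a strictly sharper conclusion. One minor point worth making explicit in a final write-up: the locally uniform-in-$u$ control on $|z_n-x^*|$ (so that $|z_n-x^*|\leq\delta_n/4$ eventually, uniformly on $u$-compacts) comes from Lemma~\ref{lemma:zn}, which you invoke correctly.
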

\begin{proof}
	
	We have
	\begin{align*}
	I_n^{(1)}&=\int\limits_{z_n}^{z_n+i\infty} e^{-\Re \left(\phi_n (z_n,v)-\phi_n(z,v)\right)} \vert dz\vert\\
	&=\int\limits_{z_n+}^{z_n+i\infty} e^{-\frac{n}{2t_n}\Re\left\{\left(z_n-x_{t_n}^{\ast}-\epsilon_n v\right)^2-\left(z-x_{t_n}^{\ast}-\epsilon_n v\right)^2+2t_n g_{{\mu}_n}(z_n)-2t_n g_{{\mu}_n}(z) \right\}} \vert 
dz\vert\\
	&\leq\int\limits_{0}^{\infty} e^{-\frac{n}{2t_n}\Re\left\{\left(z_n-x_{t_n}^{\ast}-\epsilon_n v\right)^2 
-\left(z_n+i\xi-x_{t_n}^{\ast}-\epsilon_n v\right)^2\right\}+n \Re\left\{g_{{\mu}_n}(z_n+i\xi)-g_{{\mu}_n}(z_n)\right\}} 
 d\xi\\
	&\leq \int\limits_{0}^{\infty}e^{-\frac{n}{2t_n}\xi^2+n\int\log\left|1+\frac{i\xi}{z_n-s}\right| d\mu_n(s)} d\xi.
	\end{align*}
	Using the fact that $|z_n-s|\geq \sqrt{t_n}$ and
	\[\log\left|1+ix\right|=\frac{1}{2}\log(1+x^2)\leq \frac{x^2}{2},\qquad x\in\mathbb R,\]
	we obtain
	\[I_n^{(1)}\leq \sqrt{t_n}\int_{0}^{\infty}e^{n\left(-\frac{x^2}{2}+\frac{1}{2}\log(1+x^2)\right)}dx\leq 
\sqrt{t_n}\int_{0}^{\infty}e^{\left(-\frac{x^2}{2}+\frac{1}{2}\log(1+x^2)\right)}dx,\]
	and this integral is convergent.
\end{proof}    


Combining \eqref{estimateI123} with Lemma \ref{lemma: I3} and Lemma \ref{lemma: I2}, we obtain
\[ I_n(u,v)=\mathcal O\left(\frac{n\epsilon_n|\gamma_n^{\pm}|}{\sqrt{t_n}\delta_n}e^{-\frac{n}{4t_n}L_n^2}\right),\]     
as $n\to\infty$. Using also Lemma \ref{lemma:length}, the definition of \(L_n\), and the fact that $\epsilon_n=o(\delta_n)$, as $n\to\infty$, we get
\[ I_n(u,v)=\mathcal O\left(\frac{n^4\epsilon_n}{\delta_n}e^{-\frac{n}{4t_n}L_n^2}\right)=\mathcal O\left(n^4e^{-\frac{n}{4\log^2 n}}\right),\] 
as $n\to\infty$, and this yields \eqref{nosine_eq5}.

\appendix
\section{Exact expression for the correlation kernel \texorpdfstring{$K_{n,t}$}{}}\label{appendix}
It is the purpose of this Appendix to derive the explicit representation of the correlation kernel \(K_{n,t}\), on which we based our proofs. Let us first assume that $a_1^{(n)},\ldots, a_n^{(n)}$  are not deterministic but random points following a 
polynomial ensemble of the form
\begin{align}
\frac{1}{Z_n} \Delta_n(a^{(n)}) \det\left[ f_{k-1}\left(a_j^{(n)}\right) \right]_{j,k=1}^n\label{PE}
\end{align}
	for certain functions $f_0, \ldots, f_{n-1}$, where \(Z_n >0\) is a normalization constant, and \(\Delta_n(a^{(n)})\) denotes the Vandermonde determinant
\[\Delta_n(a^{(n)}) = \prod_{j<k} \left(a_k^{(n)}-a_j^{(n)}\right).\]	
	
Then a correlation kernel $\tilde{K}_{n,t}$ for the eigenvalues of \(Y_n(t)\), $y_1,\ldots, y_n$, say, can be 
expressed in the following form, which was obtained in \cite{ClaeysKuijlaarsWang} (up to rescaling, see 
\cite{ClaeysKuijlaarsLiechtyWang} for the scaled version)
	\begin{equation} \label{eq:Kn}
	\tilde{K}_{n,t}(x,y) = \frac{n}{2\pi i t} \int_{x_0- i \infty}^{x_0 + i \infty} dz \int_{-\infty}^{\infty} dw K_n^{\rm PE}(z, w) 
e^{\frac{n}{2t} ((z-x)^2 - (w-y)^2)},
	\end{equation}
	for any choice of $x_0$,
	where $K_n^{\rm PE}$ is the correlation kernel of the polynomial ensemble \eqref{PE}, which takes the form
	\[ K_n^{\PE}(x,y) = \sum_{j=0}^{n-1} p_j(x) q_j(y) \]
	with $p_j$ a polynomial of degree $j$ and with $q_j$ in the
	linear span of $f_0, \ldots, f_{n-1}$, such that
	\[\int_{\mathbb R}p_j(x)q_k(x)dx=\delta_{j,k}.\]
	It should be noted that such a system of $p_k$'s
	and $q_k$'s is not unique in general.
	
	The case of deterministic $a_1^{(n)},\ldots, a_n^{(n)}$ can be seen as a degenerate case of a polynomial ensemble (see Section 5.4 of 
\cite{ClaeysKuijlaarsWang}), with functions $f_k$ replaced by Dirac distributions
	\[f_k(x)=\delta(x-a_k^{(n)}).\]
For pairwise distinct points \(a_k^{(n)}\) the system of $p_k$'s and $q_k$'s can formally be taken as follows,
	\[q_k(x)=\delta(x-a_k^{(n)}),\qquad p_k(x)=\prod_{j\neq k}\frac{x-a_j^{(n)}}{a_k^{(n)}-a_j^{(n)}}.\]
	Note that $p_k$ is the Lagrange interpolating polynomial, vanishing at the points $a_j^{(n)}$, $j\neq k$, and equal to $1$ 
at $a_k^{(n)}$.

	Substituting this in \eqref{eq:Kn}, we obtain an expression for the kernel $\tilde{K}_{n,t}$. In \cite{ClaeysKuijlaarsWang}, only 
genuine non-degenerate polynomial ensembles were considered, so we need to adapt the proof.
	\begin{prop}
		Let \(n\in\mathbb{N}\), \(1\leq k \leq n\) and $t>0$. Then the function 
		\begin{equation} \label{eq:Kn2}
		\tilde{K}_{n,t}(x,y) := \frac{n}{2\pi it}\sum_{k=1}^n\int_{x_0-i\infty}^{x_0+i\infty}dz \prod_{j\neq 
k}\frac{z-a_j^{(n)}}{a_k^{(n)}-a_j^{(n)}}e^{\frac{n}{2t} ((z-x)^2 - (a_k^{(n)}-y)^2)}
		\end{equation}
		satisfies 
		\begin{align*}
\rho_{n,t}^k(x_1,\dots,x_k)=\det\lb \tilde{K}_{n,t}(x_i,x_j)\rb_{1\leq i,j\leq k}, \quad x_1,\ldots,x_k \in \mathbb{R},
\end{align*}
where \(\rho_{n,t}^k\) denotes the \(k\)-th correlation function of \(P_{n,t}\) (cf. \eqref{correlation_functions} and \eqref{determinantal_K_{n,t}}).
	\end{prop}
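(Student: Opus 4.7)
Proof proposal.

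My plan is to view the deterministic initial data $a_k^{(n)}$ as a degenerate polynomial ensemble as introduced just before the proposition, identify the associated correlation kernel, and then apply the general formula \eqref{eq:Kn} of \cite{ClaeysKuijlaarsWang} for the kernel of the GUE-perturbed ensemble, performing the $w$-integral against the Dirac masses to recover \eqref{eq:Kn2}.

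Concretely, the Lagrange polynomials $p_k(z) = \prod_{j\neq k}\frac{z - a_j^{(n)}}{a_k^{(n)} - a_j^{(n)}}$ and the Dirac distributions $q_k(w) = \delta(w - a_k^{(n)})$ satisfy the biorthogonality relation $\int p_j(w)\,q_k(w)\,dw = p_j(a_k^{(n)}) = \delta_{jk}$, so they form a (distributional) biorthogonal system for the degenerate ensemble concentrated on the configuration $(a_1^{(n)}, \ldots, a_n^{(n)})$. Its correlation kernel is therefore
\[
K_n^{\PE}(z, w) = \sum_{k=1}^n p_k(z)\,\delta(w - a_k^{(n)}).
\]
Substituting this into \eqref{eq:Kn} and collapsing the $w$-integral produces exactly \eqref{eq:Kn2}. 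Convergence of the remaining $z$-contour integral is unproblematic, since along $z = x_0 + i\tau$ one has $\Re(z - x)^2 = (x_0 - x)^2 - \tau^2 \to -\infty$ as $|\tau|\to\infty$, giving Gaussian decay of the exponential factor at $\pm i\infty$.

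The main obstacle is to justify this substitution rigorously, because \eqref{eq:Kn} is derived in \cite{ClaeysKuijlaarsWang} only for nondegenerate polynomial ensembles whose weights are bona fide integrable functions rather than distributions. I would overcome this by regularization: replace $q_k$ by the narrow Gaussian $q_k^{(\varepsilon)}(w) = (2\pi\varepsilon)^{-1/2}\exp\bigl(-(w - a_k^{(n)})^2/(2\varepsilon)\bigr)$, apply the nondegenerate Claeys–Kuijlaars–Wang formula to the resulting genuine polynomial ensemble, and pass to $\varepsilon \to 0^+$. Dominated convergence then allows the limit to be taken under both integrals in \eqref{eq:Kn}, using that $\lvert e^{\frac{n}{2t}((z-x)^2 - (w-y)^2)}\rvert$ decays uniformly (Gaussian in $\Im z$ and in $w$) and that the regularized biorthogonal kernel $K_n^{\PE,\varepsilon}$ converges to $\sum_k p_k(z)\,\delta(w - a_k^{(n)})$ distributionally, with biorthogonality preserved along the way. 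Alternatively, one may re-run the proof of \eqref{eq:Kn} directly in the degenerate setting: write the joint density of the eigenvalues of $Y_n(t)$ via the HCIZ formula as
\[
P_{n,t}(y_1, \ldots, y_n) \propto \frac{\Delta_n(y)}{\Delta_n(a^{(n)})}\det\!\bigl[e^{-\frac{n}{2t}(y_i - a_j^{(n)})^2}\bigr]_{i,j=1}^n,
\]
use $\Delta_n(y)/\Delta_n(a^{(n)}) = \det[p_j(y_i)]_{i,j}$ to expose the biorthogonal structure, and then apply the Eynard–Mehta / Cauchy–Binet mechanism to express the $k$-point correlation functions as determinants of $\tilde K_{n,t}(y_i, y_j)$. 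Since in this argument the $q_k = \delta(\cdot - a_k^{(n)})$ are only ever paired against smooth Gaussians, all distributional operations are unambiguous, and the final identification with \eqref{eq:Kn2} is algebraic.
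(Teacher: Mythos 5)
Your substitution of $K_n^{\mathrm{PE}}(z,w)=\sum_k p_k(z)\,\delta(w-a_k^{(n)})$ into \eqref{eq:Kn} does reproduce \eqref{eq:Kn2} formally, and the paper's appendix records exactly this heuristic before the proof. But neither of your proposed justifications is carried out, and the second one misses the key step.

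Regarding the regularization route: you would have to prove that the biorthogonal polynomials of the $\varepsilon$-regularized ensemble converge to the Lagrange polynomials, and, more delicately, that the correlation kernel of $Y_n(t)$ built over a random $M_n^{(\varepsilon)}$ (with eigenvalues from the regularized ensemble) converges to the kernel with deterministic $M_n$; that is not a routine dominated-convergence claim. Your second alternative is structurally closer to what the paper actually does, but it calls ``algebraic'' the step that is the real content. Starting from the HCIZ density $P_{n,t}(y)\propto\det\bigl[\ell_{j,n}(y_i)\bigr]\det\bigl[e^{-\frac{n}{2t}(y_i-a_j^{(n)})^2}\bigr]$, the Eynard--Mehta formalism gives a kernel involving the inverse of the Gram matrix $A_{jk}=\int\ell_{j,n}(s)\,e^{-\frac{n}{2t}(s-a_k^{(n)})^2}\,ds$, which for $t>0$ is not the identity; your remark that ``the $q_k=\delta(\cdot-a_k^{(n)})$ are only ever paired against smooth Gaussians'' confuses the polynomial ensemble on the $a_j^{(n)}$'s with the eigenvalue ensemble of $Y_n(t)$, in which only Gaussians appear and no Dirac masses arise. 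The paper closes the gap by identifying the biorthogonal partner of $\hat q_j(y)=e^{-\frac{n}{2t}(y-a_j^{(n)})^2}$ as the polynomial $\hat p_j(x)=\frac{n}{2\pi it}\int_{x_0-i\infty}^{x_0+i\infty}\ell_{j,n}(z)\,e^{\frac{n}{2t}(z-x)^2}\,dz$, a rescaled inverse Weierstrass transform of $\ell_{j,n}$, and proving $\int\hat p_j\hat q_k=\delta_{jk}$ from the identities $\int P(x)e^{-\frac{n}{2t}(x-a_k^{(n)})^2}dx\propto(\mathcal{W}\tilde P)\bigl(\sqrt{n/t}\,a_k^{(n)}\bigr)$ and $\mathcal{W}\mathcal{W}^{-1}=\mathrm{id}$, which collapse the pairing to $\ell_{j,n}(a_k^{(n)})=\delta_{jk}$. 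This Weierstrass inversion is precisely what makes the Lagrange polynomials appear in \eqref{eq:Kn2} without any distributions or regularization, and is the step you need to supply; coincident $a_j^{(n)}$'s are then handled by a short continuity argument.
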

	
	\begin{proof} Let us first assume that the points \(a_1^{(n)},\ldots,a_n^{(n)}\) are pairwise distinct. Then it is known (see \cite[p.15]{ClaeysKuijlaarsWang} and references therein) that the eigenvalues $y_1,\ldots, 
y_n$ of \(M_n+\sqrt{t}H_n\) have a joint probability density function of the form
		\begin{equation}\label{eq:jpdfA}\frac{1}{C_n} \frac{\Delta_n (y)}{\Delta_n (a^{(n)})}\det \left(f_{k-1}(y_j)\right)_{j,k=1}^n
		\end{equation}
		with a normalization constant \(C_n>0\) and
		\[f_{k}(y)=e^{-\frac{n}{2t}(y-a_{k+1}^{(n)})^2}, ~~~k=0,\ldots,n-1,\]
		where \(a_{1}^{(n)},\ldots, a_{n}^{(n)}\) are the deterministic eigenvalues of \(M_n\).
		We show that a kernel of this determinantal ensemble is given by
		\[\tilde{K}_{n,t}(x,y):=\sum_{j=1}^n \hat{p}_j (x)\hat{q}_j(y),\]
		where we define 
		\[\hat{p}_j(x)=\frac{n}{2\pi i t}\int\limits_{x_0 -i \infty}^{x_0 +i \infty}\ell_{j,n}(z) e^{\frac{n}{2t} (z-x)^2} 
dz,\]
		with 
		\[\ell_{j,n}(z)=\prod_{\nu \neq j}\frac{z-a_\nu^{(n)}}{a_j^{(n)}-a_\nu^{(n)}},\]
		and
		\[\hat{q}_j(y)=f_{j-1}(y)=e^{-\frac{n}{2t}(y-a_{j}^{(n)})^2}.\]
		In order to prove this, we will show that the following biorthogonality relation holds
		\[\int\limits_{-\infty}^{\infty}\hat{p}_j(x)\hat{q}_k(x) dx=\delta_{j,k},~~~j,k\in\{1,\ldots,n\}.\]
		To this end, we make use of the (inverse) Weierstrass transformation of a function \(\varphi\) (see 
\cite[p.16]{ClaeysKuijlaarsWang} and references) given by
		\[\mathcal{W}\varphi (y)=\frac{1}{\sqrt{2\pi}}\int\limits_{-\infty}^\infty \varphi(t) e^{-\frac{1}{2}(t-y)^2}dt,\]
		\[\mathcal{W}^{-1}\Phi (x)=\frac{1}{\sqrt{2\pi}i}\int\limits_{-i\infty}^{i \infty}\Phi(s) 
e^{\frac{1}{2}(s-x)^2}ds.\]
		For a polynomial \(P\) and \(k=1,\ldots,n\) we have
		\begin{equation}\label{WP}\int\limits_{-\infty}^{\infty} P(x) e^{-\frac{n}{2t} 
(x-a_k^{(n)})^2}dx=\sqrt{\frac{t}{n}}\int\limits_{-\infty}^{\infty}P\left(\sqrt{\frac{t}{n}}x\right)e^{-\frac{1}{2}\left(x-\sqrt{\frac{n}{
t}}a_k^{(n)}\right)^2}dx =\sqrt{\frac{2\pi t}{n}} \mathcal{W} \tilde{P} \left(\sqrt{\frac{n}{t}} a_k^{(n)}\right),
		\end{equation}
		where \(\mathcal{W}\) acts on \(\tilde{P}\) defined by
		\[\tilde{P}(x)=P\left(\sqrt{\frac{t}{n}}x\right).\] 
		Moreover, letting \(x_0=0\) and after a linear change of variables in the definition of \(\hat{p}_j\) we obtain
		\begin{equation}\label{Wl} \hat{p}_j (x)=\frac{1}{2\pi i} \sqrt{\frac{n}{t}} \int\limits_{-i \infty}^{ i 
\infty}\ell_{j,n}\left(\sqrt{\frac{t}{n}} s\right) e^{\frac{1}{2} \left(s-\sqrt{\frac{n}{t}}x\right)^2} ds = \sqrt{\frac{n}{2\pi t}} 
\mathcal{W}^{-1}\tilde{\ell}_{j,n}\left(\sqrt{\frac{n}{t}} x\right),
		\end{equation}
		where \(\mathcal{W}^{-1}\) acts on \(\tilde{\ell}_{j,n}\) defined by
		\[\tilde{\ell}_{j,n}(x)=\ell_{j,n}\left(\sqrt{\frac{t}{n}}x\right).\]
		Using \eqref{Wl} and the definition of \(\hat{q}_k\) we get
		\[\int\limits_{-\infty}^{\infty}\hat{p}_j(x)\hat{q}_k(x) dx=\int\limits_{-\infty}^{\infty}\sqrt{\frac{n}{2\pi t}} 
\mathcal{W}^{-1}\tilde{\ell}_{j,n}\left(\sqrt{\frac{n}{t}} x\right) e^{-\frac{n}{2t}(x-a_{k}^{(n)})^2} dx.\]
		Now, applying \eqref{WP} to the polynomial
		\[P(x)=\sqrt{\frac{n}{2\pi t}} \mathcal{W}^{-1}\tilde{\ell}_{j,n}\left(\sqrt{\frac{n}{t}} x\right)\]
		gives
		\[\int\limits_{-\infty}^{\infty}\hat{p}_j(x)\hat{q}_k(x) dx=\sqrt{\frac{2\pi t}{n}} \mathcal{W} \tilde{P} 
\left(\sqrt{\frac{n}{t}} a_k^{(n)}\right)=\ell_{j,n}(a_k^{(n)})=\delta_{j,k},~~j,k\in\{1,\ldots,n\}.\]
Finally, we observe that the joint probability function in \eqref{eq:jpdfA} as well as the kernel \(\tilde{K}_{n,t}\) both depend continuously on the initial points \(a_1^{(n)},\ldots,a_n^{(n)}\), so by a continuity argument the statement follows for arbitrary initial configurations. 
	\end{proof}
	We can also write the sum as a contour integral: if $\gamma$ is a contour encircling all $a_j^{(n)}$'s in the positive sense 
and if $x_0$ is such that $\gamma$ and $x_0+i\mathbb R$ do not intersect, we can write $\tilde K_{n,t}$ as a double  contour integral,
	\begin{equation} \label{eq:Kncontour}
	\tilde K_{n,t}(x,y) = \frac{n}{(2\pi i)^2t}\int_{x_0-i\infty}^{x_0+i\infty}dz\int_\gamma dw 
\frac{1}{z-w}\frac{\prod_{j=1}^n(z-a_j^{(n)})}{\prod_{j=1}^n(w-a_j^{(n)})}\frac{e^{\frac{n}{2t} (z-x)^2}}{e^{\frac{n}{2t} 
(w-y)^2)}},
	\end{equation}
	which follows from a residue calculation.
	If $\gamma$ and $x_0$ are such that $x_0+i\mathbb R$ has exactly two intersection points $\t_1=x_0+i s_1$ and $\t_2=x_0+i s_2$ 
with $\gamma$, with $\Im \t_1>0>\Im \t_2$ and the line segment $[\t_1,\t_2]$ is fully contained in $\gamma$, we have
	\begin{multline} \label{eq:Kncontour2}
	\tilde K_{n,t}(x,y) = \frac{n}{(2\pi i)^2t}\int_{x_0-i\infty}^{x_0+i\infty}dz\int_\gamma dw 
\frac{1}{z-w}\frac{\prod_{j=1}^n(z-a_j^{(n)})}{\prod_{j=1}^n(w-a_j^{(n)})}\frac{e^{\frac{n}{2t} (z-x)^2}}{e^{\frac{n}{2t} 
(w-y)^2)}}\\
	+\frac{n}{2\pi it}\int_{\t_2}^{\t_1}dz\frac{e^{\frac{n}{2t} (z-x)^2}}{e^{\frac{n}{2t} (z-y)^2)}}.
	\end{multline}
	If we take $\t_1$ and $\t_2$ as complex conjugates, i.e., $s_1=-s_2=s>0$, we obtain upon evaluating the second term,
	\begin{multline} \label{eq:Kncontour3_appendix}
\tilde K_{n,t}(x,y) = \frac{n}{(2\pi i)^2t}\int_{x_0-i\infty}^{x_0+i\infty}dz\int_\gamma dw 
\frac{1}{z-w}\frac{\prod_{j=1}^n(z-a_j^{(n)})}{\prod_{j=1}^n(w-a_j^{(n)})}\frac{e^{\frac{n}{2t} (z-x)^2}}{e^{\frac{n}{2t} 
(w-y)^2)}}\\
	+\frac{1}{\pi (x-y)}\sin\left((x-y)s\frac{n}{t}\right)e^{\frac{n}{2t}(x^2-y^2)}e^{-(x-y)x_0\frac{n}{t}}.
	\end{multline}

	Let $g$ be defined as
	\begin{equation}\label{defg}
	g_\mu(z)=\int\log(z-a)d\mu(a),
	\end{equation}
	for any compactly supported probability measure $\mu$ on $\mathbb R$.
	The function $g_\mu$ depends in general on the choice of the logarithm, if the support of $\mu$ is contained in an interval 
$I\subset\mathbb R$, then $e^{ng_\mu(z)}$ is independent of the choice of logarithm for $z$ outside $I$.

With this notion we can rewrite \eqref{eq:Kncontour3_appendix} as 
	\begin{multline} 
	\tilde K_{n,t}(x,y) = \frac{n}{(2\pi i)^2t}\int_{x_0-i\infty}^{x_0+i\infty}dz\int_\gamma dw \frac{1}{z-w}\frac{e^{\frac{n}{2t}\left( (z-x)^2+2t g_{\mu_n}(z)\right)}}{e^{\frac{n}{2t} \left((w-y)^2+2t g_{\mu_n}(w)\right)}}\\
	+\frac{1}{\pi (x-y)}\sin\left((x-y)s\frac{n}{t}\right)e^{\frac{n}{2t}(x^2-y^2)}e^{-(x-y)x_0\frac{n}{t}}.
	\end{multline}
Now,  $\tilde K_{n,t}$ and $K_{n,t}$ from \eqref{eq:Kncontour3} satisfy the relation 
\[\tilde K_{n,t}=K_{n,t}\frac{f(x)}{f(y)},\qquad f(x)=e^{\frac{n}{2t}(x^2-2xx_0)},\]
which means that they define the same determinantal point process.\\

{\bf Acknowledgements.} T.C.~and M.V.~were supported by the European Research Council under the European Union's Seventh Framework Programme (FP/2007/2013)/ ERC Grant Agreement 307074 and by the Belgian Interuniversity Attraction Pole P07/18. T.N.~is a Research Associate (charg\'e de recherches) of FRS-FNRS (Belgian Fund for Scientific Research).\\

\bibliographystyle{abbrv}
\bibliography{bibliography}

\end{document}